\newtheorem{lemma}{Lemma}[section]
\newtheorem{thm}[lemma]{Theorem}
\newtheorem{rem}[lemma]{Remark}
\newtheorem{prop}[lemma]{Proposition}
\newtheorem{conj}[lemma]{Conjecture}
\newtheorem{defn}[lemma]{Definition}
\newcommand\matP{{\mathbb{P}}}
\newcommand\matR{{\mathbb{R}}}
\newcommand\matN{{\mathbb{N}}}
\newcommand\matC{{\mathbb{C}}}
\renewcommand{\hbar}{{\overline{h}}}
\newfont{\Got}{eufm10 scaled 1200}
\newcommand{\permu}{{\hbox{\Got S}}}
\newcommand{\mycap} [1] {\caption{\footnotesize{#1}}}
\newcommand\calC{{\mathcal C}}
\newcommand\calD{{\mathcal D}}
\newcommand{\partition}[1]{{[\![#1]\!]}}
\NewDocumentCommand{\faifig}{mmm}{
    \begin{center}
    \tl_if_empty:nTF {#2}
        {\textbf{\tiny #1}}
        {
        \tikzsetnextfilename{#2}
        \include{Figs/#2}
        }
    \mycap{#3}
    \end{center}
}
\NewDocumentCommand{\matebold}{m}{\bm{#1}}
\def\graphicfast{0}
\def\createcontour@precision{0.01}
\newlength{\edgelinewidth}
\newlength{\contourradius}
\newlength{\edgehalo}
\tikzset{
disk 1 boundary/.style={densely dotted,black!60},
disk 2 boundary/.style={black!60},
disk 1 boundary enveloped/.style={disk 1 boundary,line width=1pt},
disk 2 boundary enveloped/.style={disk 2 boundary,line width=1pt},
disk 1 boundary dashed/.style={disk 1 boundary,opacity=.6}
}
\colorlet{disk 1}{black!20}
\colorlet{disk 2}{black!35}
\tikzset{colored label/.style={#1!60!black}}
\NewDocumentCommand{\tikzenumlabel}{m}{\tikz[baseline=0pt]\node[anchor=base,fill=black!60,text=white,text height={height("1")},font={\scriptsize\normalfont},minimum width={height("1")+depth("1")+3pt},rounded corners=3pt,inner sep=1.5pt] {\textbf{#1}};}
\tikzset{pics/arc from center and point/.style n args={3}{code={
\tikzmath{coordinate \c,\p,\d;\c=#1;\p=#2;\d=(\p)-(\c);\r=veclen(\dx,\dy);\a=atan2(\dy,\dx);}
\path[pic actions] (\p) arc(\a:{\a+#3}:{\r pt});
}}}
\tikzset{
my fancy arrow/.pic={\draw[black!70,line width=2pt,-{Latex[sharp,scale=.75]}] (-.33cm,0) -- (.33cm,0);},
my fancy double arrow/.pic={\draw[black!70,line width=2pt,{Latex[sharp,scale=.75]}-{Latex[sharp,scale=.75]}] (-.5cm,0) -- (.5cm,0);}
}
\tikzset{
graph picture/.style={
/utils/exec={\pgfsetlayers{graph near vertex contour,graph near vertex mask,graph near vertex edge,graph edge below,graph edge above,graph vertex,main}}
}
}
\tikzset{
exec on layer/.code 2 args={\begin{pgfonlayer}{#1}#2\end{pgfonlayer}}
}
\newlength{\vertex@radius}
\NewDocumentCommand{\largevertices}{}{\setlength{\vertex@radius}{1.5pt}}
\NewDocumentCommand{\smallvertices}{}{\setlength{\vertex@radius}{1.5pt}}
\tikzset{
black vertex/.pic={\begin{pgfonlayer}{graph vertex}\filldraw[white,fill=black] circle(\vertex@radius);\end{pgfonlayer}},
white vertex/.pic={\begin{pgfonlayer}{graph vertex}\filldraw[black,fill=white] circle(\vertex@radius);\end{pgfonlayer}},
black edge/.style={line width=\edgelinewidth,draw=black!80},
thick black edge/.style={black edge,line width =1.5pt},
black edge dashed/.style={black edge,densely dashed},
red edge/.style={/pgf/fpu/install only={reciprocal},line width={.7*\edgelinewidth},draw=black!80,decorate,decoration={coil,segment length=1mm,post length=1mm,pre length=1mm,amplitude=.5mm}},
red edge dashed/.style={red edge,densely dashed},
green edge/.style={line width=\edgelinewidth,draw=green},
edge on layer/.style 2 args={postaction={decorate,decoration={show path construction,
	lineto code={\begin{pgfonlayer}{#1}\draw[tmpstyle/.style/.expand once={#2},tmpstyle] (\tikzinputsegmentfirst) -- (\tikzinputsegmentlast);\end{pgfonlayer}},
	curveto code={\begin{pgfonlayer}{#1}\draw[tmpstyle/.style/.expand once={#2},tmpstyle] (\tikzinputsegmentfirst) .. controls (\tikzinputsegmentsupporta) and (\tikzinputsegmentsupportb) .. (\tikzinputsegmentlast);\end{pgfonlayer}}}}},
graph edge/.style 2 args={edge on layer={graph edge #1}{#2}},
}
\tikzset{
create contour settings/.is family,
create contour settings/.cd,
defaults/.style={save in=contour},
direction/.store in=\createcontour@direction,
radius/.store in=\createcontour@radius,
begin/.store in=\createcontour@begin,
end/.store in=\createcontour@end,
save in/.store in=\createcontour@savein,
}
\tikzset{
create contour/.style={
	create contour settings/.cd,
	defaults,
	#1,
	/tikz/.cd,
	postaction={
	/utils/exec={
		\expandafter\xdef\csname\createcontour@savein\endcsname{}
		\expandafter\xdef\csname\createcontour@savein rev\endcsname{}
		\def\todoonmark{
			\edef\coordname{\createcontour@savein-\pgfkeysvalueof{/pgf/decoration/mark info/sequence number}}
			\expandafter\xdef\csname\createcontour@savein\endcsname{\csname\createcontour@savein\endcsname (\coordname)}
\expandafter\xdef\csname\createcontour@savein rev\endcsname{(\coordname-rev) \csname\createcontour@savein rev\endcsname}
			\coordinate (\coordname) at ({90*(\createcontour@direction)}:\createcontour@radius);
			\coordinate (\coordname-rev);
}},
	decorate,decoration={
		markings,
		mark=at position {\createcontour@begin} with {\coordinate (\createcontour@savein-begin-onedge);\coordinate (\createcontour@savein-begin) at ({90*(\createcontour@direction)}:\createcontour@radius);},
		mark=between positions {\createcontour@begin+0.0005} and {\createcontour@end-0.0005} step \createcontour@precision with {\todoonmark},
		mark=at position {\createcontour@end} with {\coordinate (\createcontour@savein-end-onedge);\coordinate (\createcontour@savein-end) at ({90*(\createcontour@direction)}:\createcontour@radius);\todoonmark}
	},
	}
}
}
\tikzset{
contour edge settings/.is family,
contour edge settings/.cd,
defaults/.style={below,begin=0.2,end=0.8,left bcap=0,right bcap=0,left ecap=0,right ecap=0,edge={opacity=0}},
right/.store in=\contouredge@right,
left/.store in=\contouredge@left,
below/.code={\def\contouredge@layer{below}\def\contouredge@isabove{0}},
above/.code={\def\contouredge@layer{above}\def\contouredge@isabove{1}},
edge/.store in=\contouredge@edge,
begin/.store in=\contouredge@begin,
end/.store in=\contouredge@end,
left bcap/.store in=\contouredge@leftbcap,
right bcap/.store in=\contouredge@rightbcap,
left ecap/.store in=\contouredge@leftecap,
right ecap/.store in=\contouredge@rightecap,
}
\tikzset{
contour edge settings/do stuff/.style 2 args={
	/utils/exec={
	\ifnum\contouredge@isabove=1\tikzset{postaction={
	create contour={direction=#1,radius={\contourradius+\edgehalo},begin=\contouredge@begin,end=\contouredge@end},
	postaction={exec on layer={graph edge \contouredge@layer}{\fill[white] plot[smooth] coordinates \contour -- plot[smooth] coordinates \contourrev -- cycle;}}}}\fi},
	postaction={create contour={direction=#1,radius={\contourradius-.25pt},begin=0,end=\contouredge@begin},
	postaction={exec on layer={graph near vertex mask}{\fill[white] plot[smooth] coordinates \contour -- plot[smooth] coordinates \contourrev -- cycle;}}},
	postaction={create contour={direction=#1,radius={\contourradius-.25pt},begin=\contouredge@end,end=0.998},
	postaction={exec on layer={graph near vertex mask}{\fill[white] plot[smooth] coordinates \contour -- plot[smooth] coordinates \contourrev -- cycle;}}},
	postaction={
	create contour={direction=#1,radius=\contourradius,begin=\contouredge@begin,end=\contouredge@end},
	postaction={contour edge settings/tmpstyle/.style/.expand once=#2,exec on layer={graph edge \contouredge@layer}{\draw[contour edge settings/tmpstyle] plot[smooth] coordinates \contour;},}},
	postaction={
	create contour={direction=#1,radius=\contourradius,begin=0,end=0.998},
	postaction={contour edge settings/tmpstyle/.style/.expand once=#2,exec on layer={graph near vertex contour}{\draw[contour edge settings/tmpstyle] plot[smooth] coordinates \contour;\ifnumcomp{#1}{=}{-1}{\pic[draw,contour edge settings/tmpstyle]{arc from center and point={(contour-begin-onedge)}{(contour-begin)}{-\contouredge@rightbcap}};\pic[draw,contour edge settings/tmpstyle]{arc from center and point={(contour-end-onedge)}{(contour-end)}{\contouredge@rightecap}};}{\pic[draw,contour edge settings/tmpstyle]{arc from center and point={(contour-begin-onedge)}{(contour-begin)}{\contouredge@leftbcap}};\pic[draw,contour edge settings/tmpstyle]{arc from center and point={(contour-end-onedge)}{(contour-end)}{-\contouredge@leftecap}};}
},}},
},
contour edge/.style={
	postaction={
	contour edge settings/.cd,
	defaults,
	#1,
	/tikz/.cd,
	contour edge settings/do stuff={-1}{\contouredge@right},
	contour edge settings/do stuff={1}{\contouredge@left},
	graph edge={\contouredge@layer}{\contouredge@edge},
}}
}
\tikzset{
bunch of nodes settings/.is family,
bunch of nodes settings/.cd,
defaults/.style={first edge style/.style={},last edge style/.style={},first node={},last node={},/utils/exec={\def\bunchofnodes@hasbrace{0}}},
size/.store in=\bunchofnodes@size,
first edge/.code={\pgfkeys{/tikz/bunch of nodes settings/first edge style/.style/.expand once={#1}}},
last edge/.code={\pgfkeys{/tikz/bunch of nodes settings/last edge style/.style/.expand once={#1}}},
first node/.store in=\bunchofnodes@firstnode,
last node/.store in=\bunchofnodes@lastnode,
brace/.code={\def\bunchofnodes@hasbrace{1}\edef\bunchofnodes@bracetext{#1}},
}
\tikzset{
bunch of nodes/.pic={
\tikzset{
bunch of nodes settings/.cd,
defaults,
#1,
/tikz/.cd,
}
\path[bunch of nodes settings/first edge style] (0,0) -- (-30:\bunchofnodes@size) coordinate (-firstnode);
\path[bunch of nodes settings/last edge style] (0,0) -- (30:\bunchofnodes@size) coordinate (-lastnode);
\bgroup
\pic (tmppic) at (-firstnode) {code={\bunchofnodes@firstnode}};
\pic (tmppic)  at (-lastnode) {code={\bunchofnodes@lastnode}};
\egroup
\foreach \i in {0.33,0.5,0.66} {\fill[black] ($(-firstnode)!\i!(-lastnode)$) circle(0.5pt);}
\ifnumcomp{\bunchofnodes@hasbrace}{=}{1}{\draw[auto=left,decorate,decoration={raise={0.1*\bunchofnodes@size+0.4em},brace}] ([yshift=0.2em]-lastnode) -- ([yshift=-0.2em]-firstnode) node[midway,inner sep={0.1*\bunchofnodes@size+1.2em}] {\bunchofnodes@bracetext};}{}
}
}
\def\afterjoin@precision{0.01}
\newlength{\afterjoin@width}
\def\surfyscale{0.4}
\newlength{\tuberadius}
\newlength{\tubesmallradius}
\newlength{\tubelargeradius}
\def\surfdiskradius{2.5}
\def\surfdisksmallradius{1.2}
\def\surfdisksmallsep{0.2}
\newlength{\surroundingradius}
\tikzset{
surf picture/.style={
/utils/exec={\pgfsetlayers{surf surrounding,surf base,surf edge behind,surf tube,surf edge front,graph vertex,main}},
z={(0,1)},
y={(0,\surfyscale)},
},
surf boundary/.style={draw=black!75,line width=.6pt},
surf boundary soft/.style={surf boundary,opacity=.4},
surf boundary dashed/.style={surf boundary soft,densely dashed},
}
\tikzset{
surf edge/.style 2 args={edge on layer={surf edge #1}{#2}}
}
\tikzset{
pics/surf circle/.style 2 args={code={
\edef\rad{#2}
\expandafter\xdef\csname surfcircle@#1@radius\endcsname{\rad}
\coordinate (#1-center) at (0,0);
}},
pics/surf tube/.style 2 args={code={
\coordinate (tube-lground) at (#1);
\coordinate (tube-rground) at (#2);
\tikzmath{coordinate \v;\v=(tube-lground)-(tube-rground);\rad=veclen(\vx,\vy)/2;}
\global\tubelargeradius=\rad pt
\coordinate (tube-middle) at ($(tube-lground)+(\rad pt,\rad pt)$);
\begin{pgfonlayer}{surf edge front}
\path[surf boundary dashed] \tuberightpath{0}{180} \tubeleftpath{0}{180};
\path[surf boundary] ([xshift=-\tuberadius]tube-rground) arc(0:180:{\rad pt-\tuberadius}) ([xshift=\tuberadius]tube-rground) arc(0:180:{\rad pt+\tuberadius});
\path[surf boundary soft] \tuberightpath{0}{-180} \tubeleftpath{0}{-180};
\end{pgfonlayer}
}},
}
\NewDocumentCommand{\surfcirclepoint}{mm}{($(#1-center)+({#2}:{\csname surfcircle@#1@radius\endcsname})$) }
\NewDocumentCommand{\surfcirclepath}{mmm}{($(#1-center)+({#2}:{\csname surfcircle@#1@radius\endcsname})$) arc({#2}:{#3}:{\csname surfcircle@#1@radius\endcsname}) }
\NewDocumentCommand{\tuberightpoint}{m}{($(tube-rground)+({#1}:{\tuberadius} and {\tubesmallradius})$) }
\NewDocumentCommand{\tubemiddlepoint}{m}{($(tube-middle)+({#1}:{\tubesmallradius} and {\tuberadius})$) }
\NewDocumentCommand{\tubeleftpoint}{m}{($(tube-lground)+({#1}:{\tuberadius} and {\tubesmallradius})$) }
\NewDocumentCommand{\tuberightpath}{mm}{($(tube-rground)+({#1}:{\tuberadius} and {\tubesmallradius})$) arc ({#1}:{#2}:{\tuberadius} and {\tubesmallradius}) }
\NewDocumentCommand{\tubemiddlepath}{mm}{($(tube-middle)+({#1}:{\tubesmallradius} and {\tuberadius})$) arc ({#1}:{#2}:{\tubesmallradius} and {\tuberadius}) }
\NewDocumentCommand{\tubeleftpath}{mm}{($(tube-lground)+({#1}:{\tuberadius} and {\tubesmallradius})$) arc ({#1}:{#2}:{\tuberadius} and {\tubesmallradius}) }
\NewDocumentCommand{\tubecontour}{}{\tuberightpath{0}{-180} arc(0:180:{\tubelargeradius-\tuberadius}) -- \tubeleftpath{0}{-180} arc(180:0:{\tubelargeradius+\tuberadius})}
\NewDocumentCommand{\tuberightcontour}{}{\tuberightpath{0}{-180} arc(0:90:{\tubelargeradius-\tuberadius}) arc ({270}:{90}:{\tubesmallradius} and {\tuberadius}) arc(90:0:{\tubelargeradius+\tuberadius})}
\NewDocumentCommand{\tubeleftcontour}{}{\tubeleftpath{0}{-180} arc(180:90:{\tubelargeradius+\tuberadius}) arc ({90}:{270}:{\tubesmallradius} and {\tuberadius}) arc(90:180:{\tubelargeradius-\tuberadius}) -- cycle}
\NewDocumentCommand{\tubefill}{m}{\begin{pgfonlayer}{surf tube}\fill[#1] \tubecontour{};\end{pgfonlayer}}
\NewDocumentCommand{\tuberightfill}{m}{\begin{pgfonlayer}{surf tube}\fill[#1] \tuberightcontour{};\end{pgfonlayer}}
\NewDocumentCommand{\tubeleftfill}{m}{\begin{pgfonlayer}{surf tube}\fill[#1] \tubeleftcontour{};\end{pgfonlayer}}
\NewDocumentCommand{\tubebelt}{mm}{\begin{pgfonlayer}{surf edge front}\path[#2] \tubemiddlepath{-90}{90};\path[#1] \tubemiddlepath{90}{270};\end{pgfonlayer}}
\tikzset{
surrounding/.style={edge on layer={surf surrounding}{draw,#1,line width=\surroundingradius,line cap=round}}
}
\tikzset{
after join/.style n args={3}{
/utils/exec={
\xdef\afterjoin@path{}
\xdef\afterjoin@midpath{}
\xdef\afterjoin@pathrev{}
\xdef\afterjoin@midpathrev{}
},
decorate,decoration={markings,mark=between positions 0 and .998 step \afterjoin@precision with {
\path let \n1={int(\pgfkeysvalueof{/pgf/decoration/mark info/sequence number}-1)},\n2={\n1*\afterjoin@precision},\n3={sqrt(1-\n2)},\n4={\n2>0.2} in (0,{\n3*\afterjoin@width}) coordinate (afterjoin-\n1) (0,{-\n3*\afterjoin@width}) coordinate (afterjoin-rev-\n1) \pgfextra{
	\xdef\afterjoin@path{\afterjoin@path (afterjoin-\n1)}
	\xdef\afterjoin@pathrev{(afterjoin-rev-\n1) \afterjoin@pathrev}
	\ifnum\n4=1
		\xdef\afterjoin@midpath{\afterjoin@midpath (afterjoin-\n1)}
		\xdef\afterjoin@midpathrev{(afterjoin-rev-\n1) \afterjoin@midpathrev}
	\fi
};
},mark=at position 0.999 with {
\coordinate (afterjoin-end);
\xdef\afterjoin@path{\afterjoin@path (afterjoin-end)}
\xdef\afterjoin@pathrev{(afterjoin-end) \afterjoin@pathrev}
\xdef\afterjoin@midpath{\afterjoin@midpath (afterjoin-end)}
\xdef\afterjoin@midpathrev{(afterjoin-end) \afterjoin@midpathrev}
}},
postaction={
exec on layer={surf edge #1}{
\fill[#3,save path=\afterjoinpath] plot[smooth] coordinates \afterjoin@path -- plot[smooth] coordinates \afterjoin@pathrev let \p1=(afterjoin-0),\p2=(afterjoin-rev-0),\p3=($(\p2)-(\p1)$),\n1={veclen(\x3,\y3)/2},\n2={atan2(\y3,\x3)} in arc(\n2:\n2-180:\n1);
\begin{scope}
\clip \surfcirclepath{#2}{0}{360};
\path[surf boundary,use path=\afterjoinpath];
\end{scope}
\begin{scope}
\begin{pgfinterruptboundingbox}
\clip[insert path={(-10cm,-10cm) rectangle (10cm,10cm)}] \surfcirclepath{#2}{0}{360};
\path[surf boundary] plot[smooth] coordinates \afterjoin@midpath -- plot[smooth] coordinates \afterjoin@midpathrev;
\end{pgfinterruptboundingbox}
\end{scope}
}
}
}
}
\tikzset{
cmove setting one disk/.pic={\path[use as bounding box] (-3.25,-4.5) rectangle(3.25,5);\pic {surf circle={d#1}{\surfdiskradius}};\tikzset{exec on layer={surf base}{\fill[disk #1] \surfcirclepath{d#1}{0}{360};},exec on layer={surf edge behind}{\path[surf boundary] \surfcirclepath{d#1}{0}{360};}}},
cmove setting one disk tube/.pic={\path \surfcirclepoint{d#1}{180} coordinate (-l) \surfcirclepoint{d#1}{0} coordinate (-r);\pic {surf tube={$(-l)!.25!(-r)$}{$(-l)!.75!(-r)$}};},
cmove setting two disks/.pic={\path[use as bounding box] (-2.6,-2.3) rectangle (2.6,5);\foreach \i/\j in {1/-1,2/1} {
\pic at ({\j*(\surfdisksmallradius+\surfdisksmallsep)},0) {surf circle={d\i}{\surfdisksmallradius}};
\tikzset{exec on layer={surf base}{\fill[disk \i] \surfcirclepath{d\i}{0}{360};},exec on layer={surf edge behind}{\path[surf boundary] \surfcirclepath{d\i}{0}{360};}}
}},
cmove setting two disks tube/.pic={\path \surfcirclepoint{d1}{180} coordinate (-l) \surfcirclepoint{d2}{0} coordinate (-r);\pic {surf tube={{-\surfdiskradius/2},0}{{+\surfdiskradius/2},0}};}
}
\def\torus@precision{81}
\def\radiusA{1.7}
\def\radiusB{0.5}
\def\torusprecision{\torus@precision}
\tikzset{declare function={%
torusx(\u,\v)=cos(\u)*(\radiusA + \radiusB*cos(\v)); 
torusy(\u,\v)=(\radiusA + \radiusB*cos(\v))*sin(\u);
torusz(\u,\v)=\radiusB*sin(\v);
vcrit1(\u)=atan(tan(\tdplotmaintheta)*sin(\u));% first critical v value
vcrit2(\u)=180+atan(tan(\tdplotmaintheta)*sin(\u));% second critical v value
thetacritA(\asdsd)=atan(sqrt(\radiusA/\radiusB-1));
thetacritB(\asdasd)=acos(\radiusB/\radiusA);
ucritA(\th)=180+(90/pi)*sqrt(abs(-(\radiusA^2*pow(cot(\th),2))+4*pow(\radiusB,2)/pow(sin(2*\th),2)))/\radiusA; 
ucritB(\th)=540-ucritA(\th);
umaxA(\th)=asin(sqrt(abs(-pow(cot(\th),2)+4*pow(\radiusB,2)/(pow((sin(2*\th)*\radiusA),2)))));
umaxB(\th)=180-umaxA(\th);}}
\tikzset{3d torus/.style={/utils/exec=\pgfmathsetmacro{\DDA}{int(sign(sin(thetacritA(1))-sin(\tdplotmaintheta)))}
  \pgfmathsetmacro{\DDB}{int(sign(sin(thetacritB(1))-sin(\tdplotmaintheta)))},
  insert path={
  plot[variable=\x,domain=1:359,smooth cycle,samples=\torus@precision]
  ({torusx(\x,vcrit1(\x))},
 {torusy(\x,vcrit1(\x))},
 {torusz(\x,vcrit1(\x))}) 
   \ifnum\DDA=1
    plot[variable=\x,domain=0:360,smooth cycle,samples=\torus@precision]
    ({torusx(\x,vcrit2(\x))},
    {torusy(\x,vcrit2(\x))},
    {torusz(\x,vcrit2(\x))})    
   \else
   \ifnum\DDB=1 
    plot[variable=\x,domain={umaxA(\tdplotmaintheta)}:{umaxB(\tdplotmaintheta)},smooth,samples=\torus@precision]
    ({torusx(\x,vcrit2(\x))},
    {torusy(\x,vcrit2(\x))},
    {torusz(\x,vcrit2(\x))})    --
    plot[variable=\x,domain={180+umaxA(\tdplotmaintheta)}:{180+umaxB(\tdplotmaintheta)},smooth,samples=\torus@precision]
    ({torusx(\x,vcrit2(\x))},
    {torusy(\x,vcrit2(\x))},
    {torusz(\x,vcrit2(\x))})  -- cycle  
    \fi 
   \fi
  }},3d torus stretch/.style={/utils/exec=\pgfmathsetmacro{\DDA}{int(sign(thetacritA()-\tdplotmaintheta))},
  insert path={\ifnum\DDA=-1
   plot[variable=\x,domain={ucritA(\tdplotmaintheta)}:{ucritB(\tdplotmaintheta)},smooth,samples=\torus@precision]
    ({torusx(\x,vcrit2(\x))},
    {torusy(\x,vcrit2(\x))},
    {torusz(\x,vcrit2(\x))}) 
  \fi 
}}}
\newcommand{\ontorus}[2]{{torusx(#1,#2)},{torusy(#1,#2)},{torusz(#1,#2)}}
\tikzset{torus straight/.style n args={4}{insert path={plot[variable=\t,domain=0:1] (\ontorus{interp(#1,#3,\t)}{interp(#2,#4,\t)})}},
atorus straight/.style n args={4}{insert path={-- plot[variable=\t,domain=0:1] (\ontorus{interp(#1,#3,\t)}{interp(#2,#4,\t)})}},
}
\def\drawtorusgraph{
\draw[black edge] plot[variable=\x,domain={vcrit2(240)}:{vcrit1(240)}] (\ontorus{240}{\x});
\draw[black edge] [torus straight={240}{60}{270}{30}];
\draw[black edge] [torus graph small region];
\draw[black edge] [torus straight={-50}{30}{80}{150}];
\draw[black edge] [torus straight={80}{150}{240}{60}];
\pic at (\ontorus{240}{60}) {black vertex};
\pic at (\ontorus{240}{0}) {white vertex};
\pic at (\ontorus{270}{30}) {white vertex};
\pic at (\ontorus{310}{30}) {black vertex};
\pic at (\ontorus{80}{150}) {white vertex};
}
\tikzset{
torus graph small region/.style={insert path={
plot[variable=\t,domain=-1:1] (\ontorus{interp(270,310,.5*\t+.5)}{30+45*sqrt(1-\t*\t)}) -- plot[variable=\t,domain=1:-1] (\ontorus{interp(270,310,.5*\t+.5)}{30-45*sqrt(1-\t*\t)})
}},
torus graph upper contour/.style={insert path={
	plot[variable=\x,domain={vcrit2(240+\xac):{60+\yac}}] (\ontorus{240+\xac}{\x})
	[atorus straight={240+\xac}{60+\yac}{270-\xac}{30+\yac}]
	-- plot[variable=\t,domain=-1:1] (\ontorus{interp(270-\xac,310+\xac,.5*\t+.5)}{30+\yac+45*sqrt(1-\t*\t)})
	[atorus straight={-50+\xac}{30+\yac}{80}{150+\yac}]
	[atorus straight={80}{150+\yac}{240-\xac}{60+\yac}]
	-- plot[variable=\x,domain={60+\yac}:{vcrit2(240-\xac)}] (\ontorus{240-\xac}{\x})
}},
torus graph lower contour/.style={insert path={
	plot[variable=\x,domain={vcrit1(240-\xac)}:{60-\yac}] (\ontorus{240-\xac}{\x})
	[atorus straight={240-\xac}{60-\yac}{80}{150-\yac}]
	[atorus straight={80}{150-\yac}{-50+\xac}{30-\yac}]
	-- plot[variable=\t,domain=1:-1] (\ontorus{interp(270-\xac,310+\xac,.5*\t+.5)}{30-\yac-45*sqrt(1-\t*\t)})
	[atorus straight={270-\xac}{30-\yac}{240+\xac}{60-\yac}]
	-- plot[variable=\x,domain={60-\yac}:{vcrit1(240+\xac)}] (\ontorus{240+\xac}{\x})
}},
torus graph behind/.style={insert path={
plot[variable=\x,domain={360+vcrit1(240)}:{vcrit2(240)}] (\ontorus{240}{\x})
}},
torus graph behind right/.style={insert path={
plot[variable=\x,domain={360+vcrit1(240+\xac)}:{vcrit2(240+\xac)}] (\ontorus{240+\xac}{\x})
}},
torus graph behind left/.style={insert path={
plot[variable=\x,domain={vcrit2(240-\xac)}:{360+vcrit1(240-\xac)}] (\ontorus{240-\xac}{\x})
}},
torus graph small contour/.style={insert path={
plot[variable=\t,domain=-1:1] (\ontorus{interp(270+\xac,310-\xac,.5*\t+.5)}{30+(45-\yac)*sqrt(1-\t*\t)}) -- plot[variable=\t,domain=1:-1] (\ontorus{interp(270+\xac,310-\xac,.5*\t+.5)}{30-(45-\yac)*sqrt(1-\t*\t)})
}}
}
\tikzset{contour edge/.style={
	postaction={
	contour edge settings/.cd,
	defaults,
	#1,
	/tikz/.cd,
	graph edge={\contouredge@layer}{\contouredge@edge},
}}}
\tikzset{external/only named=true}
\begin{document}

\title{Solution of the Hurwitz problem\\ with a length-2 partition}

\author{Filippo~\textsc{Baroni}\and
Carlo~\textsc{Petronio}\thanks{Partially supported by INdAM through GNSAGA, by
MUR through the PRIN project
n.~2017JZ2SW5$\_$005 ``Real and Complex Manifolds: Topology, Geometry and Holomorphic Dynamics''
and by UniPI through the PRA$\_2018\_22$ ``Geometria e Topologia delle Variet\`a''}}

\maketitle

\begin{abstract}\noindent
In this note we provide a new partial solution to the Hurwitz existence problem
for surface branched covers. Namely, we consider candidate branch data with base surface the sphere
and one partition of the degree having length two, and we fully determine which of them are realizable and which are exceptional.
The case where the covering surface is also the sphere was solved
somewhat recently by Pakovich, and we deal here with the case
of positive genus. We show that the only other exceptional candidate data, besides  those of Pakovich (five infinite families and one sporadic case),
are a well-known very specific infinite family in degree $4$
(indexed by the genus of the candidate covering surface, which can attain any value), five sporadic cases (four in genus $1$ and one in genus $2$),
and another infinite family in genus $1$ also already known.
Since the degree is a composite number for all these exceptional data, our findings provide more evidence for the prime-degree conjecture.
Our argument proceeds by induction on the genus and on the number of branching points, so our results logically depend on those of Pakovich,
and we do not employ the technology of constellations on which his proof is based.

\smallskip

\noindent MSC (2020): 57M12.
\end{abstract}

\noindent
A \emph{surface branched cover} is a map $f:\Theta\to\Sigma$, where $\Theta$ and $\Sigma$ are connected closed
and orientable surfaces and $f$ is locally modeled on
the function $\matC\ni z\mapsto z^k\in\matC$ for a positive integer $k$. If $k>1$ the point corresponding to $0$ in the target $\matC$ is called a \emph{branching point}, and $k$
is called the \emph{local degree} of $f$ at the point corresponding to $0$ in the source $\matC$. There is a finite number $n$
of branching points, and removing all of them and their preimages one gets an ordinary cover of some degree $d$, called the \emph{degree} of $f$.
The collection of the local degrees at the preimages of the $j$-th branching point forms a partition $\pi_j$ of $d$, namely an unordered array of positive integers summing up to $d$, possibly with repetitions. To such an $f$ we associate a symbol called a \emph{branch datum} given by
$\left(\Theta,\Sigma,d,n;\pi_1,\ldots,\pi_n\right)$
where the partitions $\pi_1,\ldots,\pi_n$ are viewed up to reordering. If $\ell_j$ denotes the length of $\pi_j$
(\emph{i.e.}, the number of entries of $\pi_j$), the datum satisfies the Riemann-Hurwitz condition
\begin{equation}
\label{general:RH}
\chi\left(\Theta\right)-\left(\ell_1+\ldots+\ell_n\right)=d\cdot\left(\chi\left(\Sigma\right)-n\right).
\end{equation}
We now call \emph{candidate branch datum} a symbol
$\left(\Theta,\Sigma,d,n;\pi_1,\ldots,\pi_n\right)$
with $\Theta$ and $\Sigma$ connected closed orientable surfaces, $d$ and $n$ positive integers, and $\pi_1,\ldots,\pi_n$ partitions of $d$, satisfying condition~(\ref{general:RH}).
We will always assume that no $\pi_j$ is the trivial partition with all entries $1$.
A candidate branch datum is \emph{realizable} if it is the branch datum associated to an existing surface branched cover $f$, and \emph{exceptional} otherwise.

The question of characterizing which candidate branch data are realizable and which are exceptional is known as the \emph{Hurwitz existence problem}~\cite{Hur1891}.
It has a long history (see the surveys~\cite{Bologna, Hurwsurvey}
and~\cite{Bar01, BeEd84, CoZa18, Ez78, FePe18, Ger87, Hus62, KhoZdr96, Pako, PaPe, PaPebis, PePe06, PePe08, SX10, Thom65}),
and many motivations (see for instance~\cite{MSS}).
Before proceeding we fix the following:

\paragraph{Notation}
We indicate by $S$ the sphere, by $T$ the
torus and by $g\cdot T$ the orientable connected closed surface of genus $g$ for $g\geqslant2$, but we also accept the symbols $0\cdot T=S$ and $1\cdot T=T$.
We use double square brackets $\partition{\,\cdot\,}$ to denote an unordered array of objects with possible repetitions.
So a partition $\pi$ of a positive integer $d$ is given by $\pi=\partition{q_1,\ldots,q_m}$ where the $q_i$'s are positive integers and $q_1+\ldots+q_m=d$.
We denote by $\ell(\pi)$ the length $m$ of $\pi$.

\paragraph{Some results from the literature}
We cite a crucial known~\cite{EKS} result:

\begin{thm}
Every candidate branch datum
$\left(\Theta,g\cdot T,d,n;\pi_1,\ldots,\pi_n\right)$ with $g\geqslant 1$
is realizable.
\end{thm}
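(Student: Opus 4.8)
The plan is to argue via monodromy. A degree-$d$ branched cover of $\Sigma=g\cdot T$ with branching set $\{x_1,\dots,x_n\}$ and local data $\pi_1,\dots,\pi_n$ is the same as a transitive homomorphism $\varphi\colon\pi_1\bigl(\Sigma\setminus\{x_1,\dots,x_n\}\bigr)\to\mathfrak{S}_d$ sending a small loop around $x_j$ to a permutation of cycle type $\pi_j$; the covering surface $\Theta$ is then forced, through the Riemann--Hurwitz identity~(\ref{general:RH}), to be exactly the one appearing in the candidate datum, so it suffices to exhibit such a $\varphi$. The case $n=0$ is that of unbranched covers, which exist over every $g\cdot T$ with $g\geqslant 1$, so assume $n\geqslant 1$; then $\pi_1\bigl(\Sigma\setminus\{x_1,\dots,x_n\}\bigr)$ is free, and using the standard surface-group presentation $\varphi$ is the same as a choice of permutations $\alpha_i,\beta_i$ ($1\leqslant i\leqslant g$) and $\gamma_j$ ($1\leqslant j\leqslant n$), with $\gamma_j$ of cycle type $\pi_j$, such that $\prod_{i=1}^{g}[\alpha_i,\beta_i]\cdot\prod_{j=1}^{n}\gamma_j=\mathrm{id}$ and $\langle\alpha_i,\beta_i,\gamma_j\rangle$ acts transitively on $\{1,\dots,d\}$.

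First I would dispose of the only compatibility condition that is not manifestly free: since $\prod_i[\alpha_i,\beta_i]$ is even, the product $\gamma_1\cdots\gamma_n$ must be an even permutation. This holds automatically. A permutation of cycle type $\pi_j$ has sign $(-1)^{d-\ell(\pi_j)}$, so $\gamma_1\cdots\gamma_n$ has sign $(-1)^{nd-\sum_j\ell(\pi_j)}$; and~(\ref{general:RH}), combined with the fact that $\chi(\Theta)$ and $\chi(\Sigma)$ are even, yields $\sum_j\ell(\pi_j)\equiv nd\pmod 2$, so that sign is $+1$. This is exactly where $g\geqslant 1$ enters: for $\Sigma=S$ the relation would instead read $\gamma_1\cdots\gamma_n=\mathrm{id}$ literally, which is the full difficulty of the Hurwitz problem over the sphere.

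Next I would pick arbitrary permutations $\gamma_1,\dots,\gamma_n$ of cycle types $\pi_1,\dots,\pi_n$ and put $\rho:=(\gamma_1\cdots\gamma_n)^{-1}$, which is even by the previous step. It then remains to write $\rho=\prod_{i=1}^{g}[\alpha_i,\beta_i]$ while keeping transitivity. Here I would invoke the classical fact — in the sharp form due to Boccara — that every even permutation of $\mathfrak{S}_d$ is a product of two $d$-cycles, say $\rho=\alpha_1\tau$ with $\alpha_1,\tau$ both $d$-cycles. Since all $d$-cycles are conjugate in $\mathfrak{S}_d$, there is $\beta_1$ with $\beta_1\alpha_1^{-1}\beta_1^{-1}=\tau$, hence $[\alpha_1,\beta_1]=\alpha_1\tau=\rho$. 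Taking $\alpha_i=\beta_i=\mathrm{id}$ for $2\leqslant i\leqslant g$ makes $\prod_i[\alpha_i,\beta_i]\cdot\prod_j\gamma_j=\mathrm{id}$, and because $\alpha_1$ is a $d$-cycle the group $\langle\alpha_i,\beta_i,\gamma_j\rangle$ is automatically transitive. This produces the desired $\varphi$, and with it the branched cover.

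The only genuine content is this permutation-group input, and that is where the effort lies: one needs that an arbitrary even permutation is a product of two $d$-cycles (it would already suffice to know that it is a commutator of a transitively generating pair). For $d\leqslant 3$ — in particular for $d\leqslant 2$, where necessarily every $\pi_j=\partition{2}$ — this is an immediate hand check, and for general $d$ it is Boccara's theorem. An equivalent but more geometric organization of the same proof is to first build the cover over a disk $D\subset\Sigma$ containing all the $x_j$, which is unobstructed (any choice of the $\gamma_j$ works), and then to extend it across $\Sigma\setminus\Int D$, a genus-$g$ surface with one boundary circle; the extension exists precisely when the boundary monodromy $\gamma_1\cdots\gamma_n$ is a product of $g$ commutators, that is (for $g\geqslant 1$) precisely when it is even, which the second step has verified, connectedness being secured by the same choice of a $d$-cycle. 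Either way, modulo the classical lemma the argument is short, and this is in substance the proof given in~\cite{EKS}.
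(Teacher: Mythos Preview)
The paper does not supply its own proof of this theorem; it is quoted as a result of~\cite{EKS}. Your monodromy argument is correct and is, as you say, essentially the one underlying~\cite{EKS}: reduce to writing the even permutation $(\gamma_1\cdots\gamma_n)^{-1}$ as a single commutator in $\permu_d$ involving a $d$-cycle (so that transitivity comes for free), and achieve this via the classical lemma that every even permutation in $\permu_d$ is a product of two $d$-cycles.
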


This implies that to find a full solution of the Hurwitz existence problem one is only left to consider the case
where the candidate covered surface is the sphere $S$ (see also Remark~\ref{non-ori:rem} below). Many different techniques were
employed over time to attack the problem in this case, and a huge variety of exceptional and realizable candidate branch data were detected
(see the reference already cited above). We refrain from giving a full account of these results here, confining ourselves to those
that are most relevant for the present paper.
We start with the following (see~\cite{Thom65} for the case $g=0$ and~\cite[Proposition 5.2]{EKS} for any $g$):

\begin{thm}\label{length-1:thm}
A candidate branch datum $(g\cdot T,S,d,n;\pi_1,\ldots,\pi_n)$ such that $\ell(\pi_j)=1$ for some $j$, that is $\pi_j=\partition{d}$, is always realizable.
\end{thm}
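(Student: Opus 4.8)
The plan is to pass to the monodromy description of branched covers and reduce the statement to a purely combinatorial fact about factorisations of a full cycle. We may assume $d\geqslant 2$ (otherwise every partition is trivial and the datum is excluded), and then~(\ref{general:RH}) forces $n\geqslant 2$; since the partitions are unordered we may also assume $\pi_n=\partition d$. Recall that a candidate datum $(\Theta,S,d,n;\pi_1,\ldots,\pi_n)$ is realizable exactly when there are permutations $\sigma_1,\ldots,\sigma_n$ in the symmetric group $S_d$ with $\sigma_j$ of cycle type $\pi_j$, with $\sigma_1\cdots\sigma_n=\mathrm{id}$, and with $\langle\sigma_1,\ldots,\sigma_n\rangle$ acting transitively on $\{1,\ldots,d\}$; moreover, once~(\ref{general:RH}) holds, the Euler characteristic --- hence the genus --- of the resulting covering surface is forced to be that of $g\cdot T$. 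As $\pi_n=\partition d$, the factor $\sigma_n$ must be a $d$-cycle, and after conjugating the whole system we may take $\sigma_n=(1\ 2\ \cdots\ d)^{-1}$; then the group already contains a $d$-cycle and so is transitive no matter what the other $\sigma_j$ are. Thus it remains only to produce $\sigma_1,\ldots,\sigma_{n-1}$ of cycle types $\pi_1,\ldots,\pi_{n-1}$ with $\sigma_1\cdots\sigma_{n-1}=(1\ 2\ \cdots\ d)$.

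The single arithmetic input needed comes from~(\ref{general:RH}): writing $\ell_j=\ell(\pi_j)$, substituting $\ell_n=1$ and $\chi(g\cdot T)=2-2g$ and simplifying gives
\[
\sum_{j=1}^{n-1}\bigl(d-\ell_j\bigr)\;=\;d-1+2g,
\]
so the left-hand side is $\geqslant d-1$ and is congruent to $d-1$ modulo $2$. (Both conditions are in fact necessary for a solution: the congruence is the equality $\mathrm{sign}(\sigma_1\cdots\sigma_{n-1})=\mathrm{sign}\,(1\ 2\ \cdots\ d)$, and the inequality says a transitive family of permutations uses at least $d-1$ transpositions in total when each factor is replaced by a minimal transposition factorisation.)

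So everything reduces to the following combinatorial claim: if $\varpi_1,\ldots,\varpi_m$ are partitions of $d$ with $R:=\sum_i(d-\ell(\varpi_i))\geqslant d-1$ and $R\equiv d-1\pmod 2$, then there exist permutations of these cycle types whose product is a $d$-cycle. This is precisely the content of~\cite{Thom65} when $R=d-1$ and of~\cite[Proposition 5.2]{EKS} in general, and for a self-contained treatment the plan is to argue by induction on $m$. The base case $m=1$ forces $\varpi_1=\partition d$, realised by a $d$-cycle. For $m\geqslant 3$ one merges the last two partitions into one: choosing minimal transposition factorisations of permutations $\sigma_{m-1},\sigma_m$ of types $\varpi_{m-1},\varpi_m$ whose support graphs have union a forest makes $\sigma_{m-1}\sigma_m$ a permutation of some type $\varpi$ with $d-\ell(\varpi)$ as large as the bound $\leqslant d-1$ and the parity of $(d-\ell(\varpi_{m-1}))+(d-\ell(\varpi_m))$ allow; one then checks that the shorter family $(\varpi_1,\ldots,\varpi_{m-2},\varpi)$ still meets the hypotheses and that a realisation of it produces one of the original family (using that membership of a conjugacy class in a product of two conjugacy classes depends only on the class). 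The heart is therefore the case $m=2$: when $d-\ell(\varpi_1)+d-\ell(\varpi_2)=d-1$ it suffices to find forests $F_1,F_2$ on $\{1,\ldots,d\}$ whose components have exactly the sizes prescribed by $\varpi_1$ and $\varpi_2$ and whose union is a spanning tree, and to set $\sigma_i$ equal to the product of the edges of $F_i$; that $\sigma_1\sigma_2$ is then a $d$-cycle follows from the elementary fact that the edges of a tree multiplied in any order give a full cycle on its vertex set. When the excess $R-(d-1)=2g$ is positive one instead assembles $F_1\cup F_2$ as a connected graph of cycle rank $2g$, arranged so that the ordered product of its edges is still a single $d$-cycle; this is where the positive genus is accounted for.

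The step I expect to be the genuine obstacle is this last, combinatorial one, and within it the careful bookkeeping: realising the spanning tree when many parts equal $1$ (so that no vertex is left isolated or the union disconnected), inserting and ordering the extra $2g$ edges so that the total ordered product stays a single cycle rather than splitting, and handling degenerate configurations such as one $\varpi_i$ being itself $\partition d$, where one is back to writing products of two full cycles. The monodromy reduction and the Riemann--Hurwitz computation are routine, and in practice one may simply cite~\cite{Thom65} and~\cite[Proposition 5.2]{EKS} for the combinatorial claim.
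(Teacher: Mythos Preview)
The paper does not prove this theorem at all: it simply cites \cite{Thom65} for $g=0$ and \cite[Proposition~5.2]{EKS} for the general case, and moves on. Your proposal goes further than the paper does, correctly identifying the same two references and additionally sketching how the argument behind them runs.

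Your reduction is sound: the monodromy translation, the observation that the $d$-cycle factor forces transitivity, and the Riemann--Hurwitz computation $\sum_{j<n}(d-\ell_j)=d-1+2g$ are all correct. Your inductive scheme for the combinatorial claim is essentially the EKS argument; in fact the paper itself later repackages the key pieces as Propositions~\ref{smallv:existence:prop}--\ref{largev:twocycles:existence:prop} (for the ``merge two partitions'' step) and as the moves $A_1,A_2$. One check you leave implicit but which does go through: when $m\geqslant 3$ and you merge $\varpi_{m-1},\varpi_m$ into $\varpi$, the new total satisfies $R'\geqslant\sum_{i\leqslant m-2}v(\varpi_i)+(d-2)\geqslant d-1$ since at least one non-trivial $\varpi_i$ remains, so the inductive hypothesis still applies.

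The only place your sketch is genuinely thin is the $m=2$ case with positive excess $2g>0$, where ``assembles $F_1\cup F_2$ as a connected graph of cycle rank $2g$, arranged so that the ordered product of its edges is still a single $d$-cycle'' is a description of the goal rather than of a construction. You flag this yourself, and since you are prepared to cite \cite[Proposition~5.2]{EKS} (equivalently the paper's Proposition~\ref{largev:truecycle:existence:prop}) for exactly this statement, there is no gap.
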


One can informally phrase this result saying that a candidate branch datum with one partition
\emph{as short as it could at most be} is realizable. It is then
natural to consider the case where one partition has the next shortest possible length, \textit{i.e.}~$2$.
Namely one can ask the question of which candidate branch data $(g\cdot T,S,d,n;\pi_1,\ldots,\pi_n)$
such that $\ell(\pi_j)=2$ for some $j$, that is $\pi_j=\partition{s,d-s}$ for $0<s<d$, are realizable. The solution was obtained in the following cases:
\begin{itemize}
\item For any $g$, any $n$ and $s=1$ in~\cite{EKS};
\item For $n=3$, any $s$ and $\pi_2=\pi_3=\partition{2,\ldots,2}$, whence $g=0$, in~\cite{EKS};
\item For any $g$, $n=3$ and $s=2$ in~\cite{PePe08};
\item For $g=0$, any $n$ and any $s$ in~\cite{Pako}.
\end{itemize}
The aim of the present paper is to provide a complete answer to the question, namely to face the case of any $g\geqslant1$,
any $n$ and any $s$ left out by Pakovich~\cite{Pako}. The (long) answer will be given in Section~\ref{statement:sec} (see Theorem~\ref{main:thm},
whose statement does not require any of the notions treated in Sections~\ref{dessins:sec} and~\ref{monodromy:sec}).
We only mention here that our argument is based on two inductions, one on $g$ for fixed $n=3$, with the base step $g=0$ given by the statement of~\cite{Pako},
and one on $n$. In particular, our proof depends on that of Pakovich for $n=3$, and it does not
employ the technology of constellations he uses.

\paragraph{The prime-degree conjecture}
The following was proposed in~\cite{EKS} and served ever since as a guiding idea in this area of research:

\begin{conj}
A candidate branch datum $(g\cdot T,S,d,n;\pi_1,\ldots,\pi_n)$ with prime $d$ is always realizable.
\end{conj}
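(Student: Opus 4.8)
As the statement is the (still-open) prime-degree conjecture, what follows is a proposed line of attack rather than a proof. The natural first move is to translate realizability into monodromy: a candidate datum $\left(g\cdot T,S,d,n;\pi_1,\ldots,\pi_n\right)$ is realizable exactly when there are permutations $\sigma_1,\ldots,\sigma_n\in\permu_d$ with $\sigma_j$ of cycle type $\pi_j$, with $\sigma_1\cdots\sigma_n=\mathrm{id}$, and with $\langle\sigma_1,\ldots,\sigma_n\rangle$ transitive on the $d$-element fibre; under these conditions the Riemann--Hurwitz identity~(\ref{general:RH}) is automatic. By Theorem~\ref{length-1:thm} we may assume that no $\pi_j$ equals $\partition{d}$, so each $\sigma_j$ has at least two cycles, and a short check using~(\ref{general:RH}) then forces $n\geqslant3$ (and shows that the total defect $\sum_j\left(d-\ell(\pi_j)\right)$ equals $2d-\chi\left(g\cdot T\right)=2(d-1)+2g$). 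Following the two-induction scheme of the present paper, I would next peel off branch points, reducing from $n$ to $n-1$ by replacing $\pi_{n-1},\pi_n$ with the cycle type of a suitable product $\sigma_{n-1}\sigma_n$; since $d$ is untouched this keeps us in prime degree, and the delicate point is to choose the merged partition so that the shorter symbol is still a legal candidate and so that transitivity is not lost --- which is precisely what reduces the whole problem to $n=3$.

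With $d=p$ prime, Burnside's theorem on transitive groups of prime degree splits the monodromy group $G=\langle\sigma_1,\ldots,\sigma_n\rangle$ into two regimes. Either $G$ is contained in the affine group $AGL(1,p)$ acting on $\mathbb F_p$ --- in which case every $\sigma_j$ is either a $p$-cycle or has a single fixed point with all remaining cycles of one common length dividing $p-1$, and for such very special data I would exhibit the cover directly by prescribing explicit affine maps, a finite piece of arithmetic inside $\mathbb F_p^{\,\ast}$ --- or $G$ is $2$-transitive, hence (by the classification of finite simple groups) equal to $A_p$, to $\permu_p$, to a projective group $PSL(k,q)$ with $(q^k-1)/(q-1)=p$, or to one of $PSL(2,11)$ on $11$ points, $M_{11}$ on $11$ points, $M_{23}$ on $23$ points. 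The sporadic $2$-transitive groups account for only finitely many primes and finitely many admissible cycle-type patterns and can be dispatched by inspection (most such patterns are in fact already realized inside $A_p$), so the real content lies in the case $G\in\{A_p,\permu_p\}$.

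When $G=A_p$ or $\permu_p$ there is a lot of room: the stabilization phenomenon going back to~\cite{EKS} (realizability as soon as the genus of $\Theta$ is large relative to $p$ and $n$) disposes of all data with $g$ not small, so the problem contracts to the low-genus band and ultimately to $g=0$, that is to branched covers $S\to S$ of prime degree. For this residual case I would study the existence of the permutation triple through the Frobenius--Hurwitz class coefficient $\sum_{\chi}\chi(1)^{2-n}\prod_j\chi(\pi_j)$, aiming to prove positivity by controlling the irreducible characters of $\permu_p$ on classes whose parts are all $\geqslant2$ (where the hook/cycle combinatorics of $\permu_p$ is comparatively manageable), and then passing from a weighted count to an honest transitive triple --- which for prime $p$ is easy, since transitive groups of prime degree are automatically primitive, so one only has to discard the triples supported on a proper subset of the fibre.

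The main obstacle, and the reason this remains a conjecture, is exactly this last step in genus $0$: there is no slack to absorb, the character sum is not visibly positive, and --- as Pakovich's solution of the length-$2$ case in~\cite{Pako} already indicates --- the honest argument seems to demand the fine combinatorics of constellations (or an equivalent encoding of plane maps) rather than any soft counting or stabilization input. A complete proof would presumably have to combine the genus induction used here with a uniform constellation-theoretic treatment of the prime-degree sphere-to-sphere case, and that is where the genuine difficulty sits.
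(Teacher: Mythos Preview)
The statement is a \emph{conjecture}, and the paper does not prove it; it is quoted as an open problem from~\cite{EKS}, with the remark that the results of the present paper are consistent with it. You correctly flag this at the outset, so there is no proof in the paper to compare your proposal against.

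As for the outline itself: it is a reasonable survey of the standard ingredients (monodromy reformulation, reduction to $n=3$ via~\cite{EKS}, Burnside's dichotomy for transitive groups of prime degree, the Frobenius character-sum formula), and you are honest that the genus-$0$, $n=3$ case is where the real obstruction lies. But none of the steps you sketch is actually carried out, and several of them hide serious work --- the affine case is not just ``a finite piece of arithmetic'', the character-sum positivity for $\permu_p$ is exactly the hard analytic input nobody has, and the passage from a nonzero class coefficient to a transitive triple is not automatic even in prime degree (the count includes intransitive tuples, and ruling them out requires more than primitivity). So this is a fair roadmap of why the conjecture is believed and where it resists, but it is not a proof, and you say as much.
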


It was also shown in~\cite{EKS} that proving the conjecture with $n=3$ would imply its validity for all $n$.
All the results obtained so far concerning the Hurwitz existence problem turned out to be compatible with this
conjecture (and some of them actually provided striking supporting evidence for it, see for instance~\cite{PaPe}).
Our Theorem~\ref{main:thm} below makes no exception.

\paragraph{Concluding remarks}
We end this introduction with some additional considerations.

Among the various approaches to the Hurwitz existence problem, a remarkable one was proposed (for arbitrary $n$) by Zheng~\cite{Zh06} in computational terms,
that he implemented on a machine for $n=3$, giving a complete classification of realizable and exceptional candidate branch data with $n=3$ and $d\leqslant20$.
The first named author pushed 
forward the implementation of the methods of~\cite{Zh06}, 
obtaining several computer-aided findings, some of which
are also referred to in the present paper 
in Sections~\ref{threepoints:sec}
and~\ref{morepoints:sec} 
(see the Appendix for details). However, our results do not 
really depend on these findings: we only use them
to make our arguments more concise, but the realizability 
and exceptionality of the relevant candidate branch data 
could always also be
very easily established by theoretical methods.

\begin{rem}\label{non-ori:rem}
\emph{The Hurwitz existence problem can be stated also for candidate branch data
$\calD=\left(\Theta,\Sigma,d,n;\pi_1,\ldots,\pi_n\right)$
with non-orientable $\Sigma$ and possibly non-orientable $\Theta$.
In this case the Riemman-Hurwitz condition~(\ref{general:RH}) must be complemented with two more (one obvious for orientable $\Theta$ and one less
obvious for non-orientable $\Theta$).  However the following facts proved in~\cite{EKS}, with $\matP$ denoting
the projective plane, show that a full solution of the problem follows
once it is given for $\Sigma=S$:
\begin{itemize}
\item If $\Sigma=k\cdot\matP$ with $k\geqslant 2$ then $\calD$ is realizable;
\item If $\Sigma=\matP$ and $\Theta$ is non-orientable then $\calD$ is realizable;
\item If $\Sigma=\matP$ and $\Theta$ is orientable, $\calD$ is realizable if and only if for $j=1,\ldots,n$ the partition $\pi_j$ of $d$ can be written as
the juxtaposition of two partitions $\pi'_j,\pi''_j$ of $d/2$ so that
$\left(\Theta,S,d/2,2n;\pi'_1,\pi''_1,\ldots,\pi'_n,\pi''_n\right)$
is realizable.
\end{itemize}}
\end{rem}

\begin{rem}\emph{The original question of Hurwitz was actually that of \emph{how many} realizations of a given candidate datum exist, up
to some natural equivalence relation. This problem was given a deep but somewhat implicit solution in~\cite{Medn81, Medn84}, from which
it is not easy to extract a solution to the realizability problem for specific candidate branch data. See also~\cite{KM04, KM07, KML05},
that also provide general but indirect answers, the more explicit~\cite{x1, x3, odddegree} and the easy remarks contained in~\cite{PS19} on
the different ways the realizations can be counted.}
\end{rem}

The paper is organized as follows. In Section~\ref{dessins:sec} we introduce the idea of \emph{reduction move}, which is crucial
for our induction arguments, and we define some \emph{topological} reduction moves, based on the notion of \emph{dessin d'enfant}. In Section~\ref{monodromy:sec}
we next introduce some \emph{algebraic} reduction moves, using the monodromy approach to the Hurwitz existence problem.
Then in Section~\ref{statement:sec} we state our result, listing all the exceptional candidate branch data with a length-$2$ partition,
and explaining why they are indeed exceptional using the technology of Sections~\ref{dessins:sec} and~\ref{monodromy:sec}.
As a conclusion, in Sections~\ref{threepoints:sec} and~\ref{morepoints:sec} respectively, we carry out the induction arguments on $g$ (for $n=3$) and on $n$,
thereby showing that the candidate branch data $(g\cdot T,S,d,n;\pi_1,\ldots,\pi_n)$ with some $\ell(\pi_j)=2$ and not listed in Theorem~\ref{main:thm} are indeed realizable.

% (g\cdot T,S,d,n;\pi_1,\ldots,\pi_n)
% ~

\section{Dessins d'enfant\\ and topological reduction moves}\label{dessins:sec}

As already mentioned, our proof of Theorem~\ref{main:thm}
for candidate branch data
$(g\cdot T,S,d,n;\pi_1,\ldots,\pi_{n-1},\partition{s,d-s})$
relies on two induction arguments, one on $g$ for $n=3$, and then one on $n$.
The core ingredient of both arguments is the following notion:

\begin{defn}
\emph{Let
$\calD=(g\cdot T,S,d,n;\pi_1,\ldots,\pi_n)$
be a symbol involving generic integers $g,d,n$ and partitions $\pi_1,\ldots,\pi_n$ of $d$ subject to a set $\calC$ of conditions. Let
$\calD'=(g'\cdot T,S,d',n';\pi'_1,\ldots,\pi'_{n'})$
be a similar symbol, where $g',d',n',\pi'_1,\ldots,\pi'_{n'}$ depend on $g,d,n,\pi_1,\ldots,\pi_n$.
We say that there is a \emph{reduction move} $\calD\leadsto\calD'$ subject to $\calC$ if
the following happens:
\begin{itemize}
\item Whenever $\calD$ is a candidate branch datum and $g,d,n,\pi_1,\ldots,\pi_n$ satisfy $\calC$ then
$\calD'$ is also a candidate branch datum;
\item The realizability of $\calD'$ implies that of $\calD$.
\end{itemize}}
\end{defn}

To describe our reduction moves we add an extra bit of notation. For $\eta,\rho$ partitions of integers $p,q$ we
define $\eta*\rho$ as the partition of $p+q$ obtained by juxtaposing $\eta$ and $\rho$ (recall that the order is immaterial).
Moreover when writing a reduction move $\calD\leadsto\calD'$
we will highlight using a boldface character those elements
of $g,d,n,\pi_1,\ldots,\pi_n$ in $\calD$ and those of $g',d',n',\pi'_1,\ldots,\pi'_{n'}$ in $\calD'$
that do not coincide with the corresponding elements of the other symbol.

\paragraph{Dessins d'enfant}
We review here a notion popularized by Grothendieck in~\cite{Groth} (see also~\cite{Cohen} and the
more general~\cite{LZ}), recalling its connection with the Hurwitz existence problem.

\begin{defn}\emph{We call \emph{dessin d'enfant} a finite graph $\Gamma$ in a surface $\Sigma$ such that:
\begin{itemize}
\item $\Gamma$ is bipartite, namely each of its vertices is colored black or white;
\item Each edge of $\Gamma$ has a black and a white end;
\item $\Sigma\setminus\Gamma$ is a union of topological open discs, called \emph{regions}.
\end{itemize}
If $R$ is a region, the \emph{length} of $R$ is half the number of edges of $\Gamma$ adjacent to $R$,
counted twice if $R$ is incident from both sides.
Note that the length of a complementary region can also be defined as the number of black (or, equivalently, white) vertices adjacent to it,
counted with multiplicity.}
\end{defn}

\begin{prop}\label{dessins:prop}
A candidate branch datum $\calD=(\Sigma,S,d,3;\pi_1,\pi_2,\pi_3)$ is realizable if and only if there exists a dessin d'enfant in $\Sigma$ with
valences of the black (respectively, white) vertices given by the entries of $\pi_1$ (respectively, $\pi_2$), and lengths
of the complementary regions given by the entries of $\pi_3$.
\end{prop}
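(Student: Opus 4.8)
The plan is to exhibit an explicit bijective-type correspondence between surface branched covers $f\colon\Sigma\to S$ with branch data $(\Sigma,S,d,3;\pi_1,\pi_2,\pi_3)$ and dessins d'enfant in $\Sigma$ with the prescribed valences and region lengths, from which the stated equivalence of existence is immediate. First I would fix three distinct branching points $p_1,p_2,p_3\in S$ and choose an embedded arc $\alpha\subset S$ joining $p_1$ to $p_2$ and avoiding $p_3$; then $S\setminus\alpha$ is an open disc containing $p_3$, and one can take this disc to be the ``region'' model. Given a branched cover $f$ with these branch points, set $\Gamma=f^{-1}(\alpha)$, color the preimages of $p_1$ black and the preimages of $p_2$ white, and note that $f^{-1}(\alpha)$ is a bipartite graph: each edge is a lift of $\alpha$, hence has one black and one white end. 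The local degree of $f$ at a black vertex $v$ over $p_1$ equals the number of edges of $\Gamma$ emanating from $v$ (each of the $k$ sheets around $v$ meets $\alpha$ once), so the black valences realize $\pi_1$; symmetrically for white vertices and $\pi_2$. Since $f$ restricted over the disc $S\setminus\alpha$ is an honest covering away from $p_3$, the components of $\Sigma\setminus\Gamma=f^{-1}(S\setminus\alpha)$ are discs, each containing exactly one preimage of $p_3$, and the local degree there equals the length of that region; hence the region lengths realize $\pi_3$. Conversely, given such a dessin $\Gamma\subset\Sigma$, one builds $f$ by mapping $\Gamma$ to $\alpha$ respecting colors and sending each complementary region to $S\setminus\alpha$ by the standard degree-$k$ branched model $z\mapsto z^k$; the combinatorial hypotheses on valences and lengths guarantee these pieces glue along $\Gamma$ to a well-defined branched cover with the required data.

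The main technical point — and the step I expect to require the most care — is checking that the gluing in the ``dessin $\Rightarrow$ cover'' direction is consistent, i.e.\ that the local branched-covering models on adjacent regions agree along each edge of $\Gamma$ and around each vertex. Concretely, around a black vertex $v$ of valence $k$ the cyclic order of the $k$ incident edges, together with the regions between consecutive edges, must assemble into a disc mapping to a neighborhood of $p_1$ by $z\mapsto z^k$; this forces the regions around $v$ to be laid out cyclically with their boundary arcs matching, which is exactly what the surface structure of $\Sigma$ provides. One should phrase this via the ribbon/fat-graph structure that $\Gamma\hookrightarrow\Sigma$ induces (the cyclic edge-orderings at vertices), and then invoke the standard fact that a ribbon graph together with a choice of local degree at each vertex and each face determines a branched cover of $S$ uniquely up to isomorphism. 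It may be cleanest to cite the dictionary between dessins, ribbon graphs and branched covers of $(S,\{p_1,p_2,p_3\})$ from~\cite{LZ} (or~\cite{Groth,Cohen}) rather than reprove it in full; then the proposition reduces to the bookkeeping above identifying black valences, white valences and face lengths with $\pi_1$, $\pi_2$, $\pi_3$ respectively. Finally, one notes that different choices of $p_1,p_2,p_3$ and of the arc $\alpha$ yield isotopic configurations on $S$, so the construction is independent of these choices up to homeomorphism of $\Sigma$, which is all that is needed for the realizability statement.
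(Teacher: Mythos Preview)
Your proposal is correct and follows essentially the same approach as the paper: for the ``only if'' direction you take $\Gamma=f^{-1}(\alpha)$ for an arc $\alpha$ from $p_1$ to $p_2$ avoiding $p_3$, and for the ``if'' direction you reverse this construction. The paper's own treatment is considerably terser than yours---it gives only the one-line description of the preimage construction and then simply cites~\cite{LZ} for the converse, without discussing the gluing consistency or ribbon-graph structure that you (rightly) identify as the point requiring care.
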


The ``only if'' part of the statement is obtained by defining $\Gamma$ as $f^{-1}(e)$, where $f$ is a branched cover realizing $\calD$
and $e$ is a segment with ends $p_1$ and $p_2$ and avoiding $p_3$, where $p_j\in S$ is the branching point corresponding to $\pi_j$,
with $f^{-1}(p_1)$ black and $f^{-1}(p_2)$ white. The ``if'' part is achieved by reversing this construction, see~\cite{LZ}.

\paragraph{Collapse of segments}
We now describe a key ingredient underlying the reduction moves described in the rest of this section.
Let $\Gamma$ be a bipartite graph in a surface $\Sigma$ and let $c$ be a
segment in $\Sigma$ such that $\Gamma\cap c$ consists of two distinct vertices of $\Gamma$ with the same colour.
Then we can construct a new bipartite graph $\Gamma'$
in $\Sigma$ as that obtained from $\Gamma$ by collapsing $c$ to a point, thus fusing together
its ends, as in Fig.~\ref{collapse:def:fig}. Note that the complementary regions of $\Gamma'$ naturally correspond to those of $\Gamma\cup c$.
\begin{figure}[htbp]
\faifig{Quella a pag 33 della tesi in alto}
{dessin-join-example}
{Collapse of a curly segment\label{collapse:def:fig}}
\end{figure}
This construction extends to the case where $c$ is
a finite family of segments $c_1,\ldots,c_k$ with pairwise disjoint interiors, by performing the collapses successively,
provided the ends of each $c_j$ stay distinct after $c_1,\ldots,c_{j-1}$ have been collapsed. Note that in our figures
the segments to be collapsed are drawn as curly arcs, and that we will actually view figures with curly arcs as
if these segments were already collapsed (whence the equality sign in Fig.~\ref{collapse:def:fig}), so we will not employ any
specific notation to express the fact that $\Gamma'$ is a function of $\Gamma$ and $c$.

\paragraph{Convention on figures}
A complementary region $R$ of a dessin d'enfant in a surface $\Sigma$ can be incident to itself along the boundary
(in particular, its closure in $\Sigma$ can fail to be a closed disc). However in our figures we will always unwind $R$,
representing it as a closed disc with portions of $\Gamma$ on its boundary only, as in the example of Fig.~\ref{unwind:fig}.
\begin{figure}[htbp]
\faifig{Quella a pag della tesi 32; bisogna scambiare fra loro le due parti e togliere il segno in mezzo}
{dessin-unwinding-example}
{Left: a dessin d'enfant on the torus with a light grey and a dark grey complementary regions.
Right: how to unwind the light grey region\label{unwind:fig}}
\end{figure}

\paragraph{Trivial partitions}
Recall our convention that in a candidate branch datum $\calD=(g\cdot T,S,d,n;\pi_1,\ldots,\pi_n)$ all $\pi_j$'s should be non-trivial.
However, if a symbol $\calD$ satisfies the Riemann-Hurwitz condition~(\ref{general:RH}) and contains some $\pi_j=\partition{1,\ldots,1}$,
a candidate branch datum is obtained from $\calD$ be removing these $\pi_j$'s and correspondingly reducing $n$. We now have the following easy consequence of~(\ref{general:RH}):

\begin{rem}\label{nontrivial:rem}
\emph{No candidate branch datum
$(g\cdot T,S,d,n;\pi_1,\ldots,\pi_n)$ exists with $\ell(\pi_n)=2$ and $n\leqslant 2$.}
\end{rem}

\paragraph{Topological reduction moves}
We now state and prove the existence of reduction moves $T_1,T_2,T_3,T_4$ that we call
\emph{topological}, both because they allow to lower the genus of a candidate branch datum with $n=3$,
and because they are established using dessins d'enfant.
We introduce these moves $T_j:\calD\leadsto\calD'$ in Propositions~\ref{T1:move:prop} to~\ref{T4:move:prop}.
In all four cases it is immediate to check that if $\calD$ is a candidate branch datum, so it satisfies~(\ref{general:RH}),
then $\calD'$ also satisfies~(\ref{general:RH}). This implies that $\calD'$ is also a 
candidate branch datum thanks to Remark~\ref{nontrivial:rem}, because if one of the partitions in $\calD'$ were trivial 
then $\calD'$ would give rise to a candidate branch datum with less than three partitions and one of length two.

\begin{prop}[Move $T_1$]\label{T1:move:prop}
For $g\geqslant1$ the following is a reduction move:
$$\begin{array}{cl}
T_1: & \calD=(\matebold{g}\cdot T,S,d,3;\partition{1,1,\mathbf{3}}*\rho_1,\pi_2,\partition{s,d-s})\\
\leadsto & \calD'=((\bm{g-1})\cdot T,S,d,3;\partition{1,1,\mathbf{1,1,1}}*\rho_1,\pi_2,\partition{s,d-s}).
\end{array}$$
\end{prop}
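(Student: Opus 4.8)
By the discussion preceding the statement, $\calD'$ is automatically a candidate branch datum whenever $\calD$ is, so the only thing to prove is that realizability of $\calD'$ forces realizability of $\calD$. The plan is to pass to dessins d'enfant via Proposition~\ref{dessins:prop}: the datum $\calD$ is realizable exactly when $g\cdot T$ carries a dessin with black valences $\partition{1,1,3}*\rho_1$, white valences $\pi_2$, and complementary regions of lengths $s$ and $d-s$, while $\calD'$ is realizable exactly when $(g-1)\cdot T$ carries a dessin $\Gamma'$ with the same white valences and the same region lengths but black valences $\partition{1,1,1,1,1}*\rho_1$. So it suffices, starting from such a $\Gamma'$, to build a dessin of the first kind; informally, I want a local surgery that \emph{adds a handle while fusing three univalent black vertices into one trivalent vertex}, leaving everything else alone.

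Concretely, I would pick three univalent black vertices $v_1,v_2,v_3$ of $\Gamma'$ (any three will do: removing three $1$'s from $\pi_1'$ and inserting a $3$ produces $\partition{1,1,3}*\rho_1$ regardless of the choice), with incident edges $e_1,e_2,e_3$ running to white vertices $w_1,w_2,w_3$; the two remaining univalent black vertices are what account for the $\partition{1,1}$ carried unchanged through the move. After isotoping the pendant edges — rotating each $e_i$ about $w_i$ so that it points into a prescribed complementary region — I would arrange that $v_1,v_2,v_3$ lie in the closure of a single region $R$, choose a closed disc $\Delta$ containing $v_1,v_2,v_3$ together with the inner halves of $e_1,e_2,e_3$ and meeting $\Gamma'$ only in those three half-edges (crossing $\partial\Delta$ at points $q_1,q_2,q_3$), excise $\operatorname{int}\Delta$, and glue back in its place a once-holed torus $H$ carrying a single trivalent black vertex $b$ joined by three arcs $\alpha_1,\alpha_2,\alpha_3$ to $q_1,q_2,q_3$. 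A Euler-characteristic count ($4$ vertices, $6$ edges counting the three boundary arcs) shows that the arcs can be placed so that $H$ minus $b\cup\alpha_1\cup\alpha_2\cup\alpha_3$ is a single open disc — this is essentially the ``theta graph on the torus'' picture, and it is what the accompanying figure depicts. Splicing each half-edge $e_i$ to $\alpha_i$ turns $e_1,e_2,e_3$ into honest edges from $w_1,w_2,w_3$ to $b$.

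The resulting graph $\Gamma$ is then checked to realize $\calD$: the white vertices and everything outside $\Delta$ are untouched; the Euler characteristic drops by $\chi(\Delta)-\chi(H)=1-(-1)=2$, so the genus increases by exactly one, from $g-1$ to $g$; the black valences become $\partition{1,1,3}*\rho_1$; and the region count and lengths are preserved, because $R$ absorbs the single disc region of $H$ together with the pieces of $R\setminus\Delta$ cut off by the three spikes, forming one region again, and a boundary walk shows that the three univalent vertices it used to meet (once each) are replaced by the trivalent vertex $b$ (met three times), so its length is unchanged, while the other region is not affected. I expect the main obstacle to be precisely the preliminary normalization of $\Gamma'$ that puts $v_1,v_2,v_3$ on the boundary of a common region: the only manoeuvre available is rotating each pendant edge about its white endpoint, so one must argue that this always suffices (using the structure forced by having only two complementary regions, or, failing that, invoking one of the other reduction moves first). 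The handle surgery itself, the genus computation, and the region bookkeeping are then routine.
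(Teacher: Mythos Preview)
Your surgery is essentially the paper's: attaching a handle inside a region and running three arcs over it to a common trivalent black vertex is exactly what the paper's ``attach a $1$-handle, then collapse two curly segments'' accomplishes, just phrased in cut-and-paste rather than collapse language. The region and genus bookkeeping you give is correct.

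The gap is the normalization step, and your proposed fix does not work. Rotating a pendant edge $e_i$ about its white endpoint $w_i$ past a neighbouring edge is \emph{not} an isotopy of the dessin: a univalent black vertex has its unique edge bordered on both sides by a single region, so moving the spike from region $R$ into a different region $R'$ decreases the length of $R$ by one and increases that of $R'$ by one. After such a rotation $\Gamma'$ no longer realizes $\calD'$, and you have lost control of $\pi_3=\partition{s,d-s}$. (Worse, if $w_i$ happens not to be adjacent to the target region at all, the rotation cannot even be performed.)

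The correct fix is the one you gesture at with ``using the structure forced by having only two complementary regions'', and it is just pigeonhole: $\pi_1'=\partition{1,1,1,1,1}*\rho_1$ contains \emph{five} univalent black vertices, each lying in exactly one of the \emph{two} regions, so at least three of them already share a region $R$. Choose those three as your $v_1,v_2,v_3$; no rotation is needed. This is precisely how the paper argues, and once you make this one-line observation your proof is complete.
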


\begin{proof}
Assume that $\calD'$ is realizable and let $\Gamma'$ realize it according to Proposition~\ref{dessins:prop}.
Since $\pi_1'$ contains five $1$'s, at least three $1$'s are incident to at least one complementary region of $\Gamma'$, say the light grey one $R$,
as in part 0 of Fig.~\ref{T1move:fig}.
\begin{figure}[htpb]
\faifig{Quella a cavallo delle pagine 34-35 della tesi}
{combinatorial-move-b-1}
{How to realize the reduction move $T_1$ at the level of dessins d'enfant. In part $2$ we show for the last time both the curly arcs that must be viewed as already
collapsed and the result of this collapse. We will not do this again in the next figures\label{T1move:fig}}
\end{figure}
We then proceed as follows:
\begin{enumerate}
\item We attach to $(g-1)\cdot T$ a $1$-handle with both attaching discs inside $R$ (see part 1 of Fig.~\ref{T1move:fig});
\item We collapse along two curly segments as in part 2 of Fig.~\ref{T1move:fig}.
\end{enumerate}
The resulting bipartite graph $\Gamma$ is a dessin d'enfant realizing $\calD$.
\end{proof}

\begin{prop}[Move $T_2$]\label{T2:move:prop}
For $g\geqslant1$, $x\geqslant4$, $2\leqslant s\leqslant d-2$, $x_1,x_2\geqslant1$ and $x_1+x_2=x-2$ the following is a reduction move:
$$\begin{array}{cl}
T_2: & \calD=(\matebold{g}\cdot T,S,\matebold{d},3;\partition{\matebold{x}}*\rho_1,\partition{\mathbf{2}}*\rho_2,\partition{\matebold{s},\bm{d-s}})\\
\leadsto & \calD'=((\bm{g-1})\cdot T,S,\bm{d-2},3;
    \partition{\matebold{x}_\mathbf{1},\matebold{x}_\mathbf{2}}*\rho_1,\rho_2,\partition{\bm{s-1},\bm{d-s-1}}).
\end{array}$$
\end{prop}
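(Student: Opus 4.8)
The plan is to argue exactly as for move $T_1$, in the language of dessins d'enfant. Suppose $\calD'$ is realizable and fix, by Proposition~\ref{dessins:prop}, a dessin d'enfant $\Gamma'$ in $(g-1)\cdot T$ with black valences $\partition{x_1,x_2}*\rho_1$, white valences $\rho_2$, and complementary regions of lengths $s-1$ and $d-s-1$; denote these two regions by $R$ and $R'$ (there are exactly two of them, being in bijection with the entries of $\partition{s-1,d-s-1}$), and let $b_1,b_2$ be the black vertices of valences $x_1,x_2$. I want to surger $\Gamma'$ into a dessin d'enfant $\Gamma$ in $g\cdot T$ realizing $\calD$. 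Comparing $\calD'$ with $\calD$, the graph $\Gamma$ should differ from $\Gamma'$ by: one extra white vertex $w$ of valence $2$ carrying two new edges (so the degree goes from $d-2$ up to $d$), the vertices $b_1$ and $b_2$ fused into a single black vertex $b$ of valence $(x_1+1)+(x_2+1)=x$, the genus raised by one, and each complementary region one unit longer.

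First I would attach a $1$-handle to $(g-1)\cdot T$ with one attaching disc inside $R$ and the other inside $R'$; the ambient surface becomes $g\cdot T$, while $R$ and $R'$ merge into a single region $A$, an annulus with boundary circles $\partial R$ and $\partial R'$. Next I place $w$ in the interior of $A$ and join it by two new edges to a corner of $b_1$ lying on $\partial R$ and to a corner of $b_2$ lying on $\partial R'$, so that the path $b_1 - w - b_2$ runs across $A$ from one boundary circle to the other; cutting $A$ along this path replaces it by a single disc $D$, on whose boundary lie all $d$ black corners of $\Gamma$ (the black corners of $\partial R$ and of $\partial R'$, with $b_1$ and $b_2$ each picking up one extra occurrence from the cut). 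Finally I collapse a curly segment $c$ inside $D$ joining an occurrence of $b_1$ to an occurrence of $b_2$, routed so as to separate the corners descending from $\partial R$ from those descending from $\partial R'$; this identifies $b_1$ with $b_2$, giving the black vertex $b$, and cuts $D$ into two discs whose lengths, by a direct count of boundary corners, are exactly $s$ and $d-s$. The resulting bipartite graph $\Gamma$ then has black valences $\partition{x}*\rho_1$, white valences $\partition{2}*\rho_2$, two complementary open discs of lengths $s$ and $d-s$, and lives in $g\cdot T$, hence realizes $\calD$ by Proposition~\ref{dessins:prop}.

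The step I expect to be delicate is the placement of the two new edges and of the segment $c$: one must ensure that $b_1$ has a corner on $\partial R$ and $b_2$ a corner on $\partial R'$ — possibly after exchanging the names of the two regions, or by exploiting the fact that $b_1$ or $b_2$ may well be incident to both regions — and then that $c$ can be drawn inside $D$ so as to split the two families of boundary corners in the prescribed way. The only genuinely awkward configuration is the one in which $b_1$ and $b_2$ are both incident to a single one of the two regions; this has to be handled separately, using either the freedom in the choice of the vertices playing the roles of $b_1,b_2$ or the constraint that $x_1+x_2=x-2$ is then bounded above by that region's length. As in the proof of move $T_1$, the remaining requirement of a reduction move — that if $\calD$ is a candidate branch datum then so is $\calD'$ — follows at once from the Riemann-Hurwitz identity~(\ref{general:RH}) together with Remark~\ref{nontrivial:rem}, exactly as explained before Proposition~\ref{T1:move:prop}.
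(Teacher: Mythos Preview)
Your handling of the case where $b_1$ and $b_2$ lie on different region boundaries is essentially the paper's Case~1, reworded: instead of inserting the co-core of the handle (a black--white bigon) and then collapsing two curly segments onto $b_1$ and $b_2$, you insert a white $2$-valent vertex joined directly to $b_1$ and $b_2$ across the handle and collapse once. These are equivalent constructions and your region-length count is correct.

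The gap is Case~2, when $b_1$ and $b_2$ are \emph{both} completely surrounded by the same region~$R$. You flag this as ``awkward'' but do not resolve it, and neither of your two suggested escape routes works. There is no freedom in the choice of $b_1,b_2$: they are the vertices of valence $x_1,x_2$, and nothing forces $\rho_1$ to contain another entry equal to $x_1$ or $x_2$. The length inequality $x_1+x_2\leqslant\text{length}(R)$ is true but does not exclude the configuration (it only rules out tiny $R$); for instance with $x_1=x_2=1$ both vertices are leaves, each sees a single region, and that region can certainly be the same one. In this situation your construction breaks down because a handle connecting $R$ to $R'$ produces an annulus whose two boundary circles carry $b_1,b_2$ on the \emph{same} side, so no path $b_1$--$w$--$b_2$ can cross it.

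The paper's fix is genuinely different from Case~1: pick an edge $e$ of $\Gamma'$ shared by $R$ and $R'$ (one exists, else $R'$ would have no boundary), subdivide $e$ by inserting one black and one white vertex (this accounts for the extra white $2$ and raises both region lengths by~$1$), then attach the $1$-handle with \emph{both} feet inside $R$, and finally collapse two curly segments that run over the handle merging the new black $2$-valent vertex with $b_1$ and with $b_2$. The merged vertex has valence $2+x_1+x_2=x$, and the handle-plus-collapses restore two disc regions of the right lengths. You need this second construction (or something equivalent) to complete the proof.
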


\begin{proof}
Take a dessin d'enfant $\Gamma'$ realizing $\calD'$. We consider two cases, showing in both of them how to construct from $\Gamma'$ a dessin d'enfant realizing $\calD$.

\medskip

\noindent\textsc{Case 1:}
\emph{The black vertex of valence $x_1$ is adjacent to one complementary region of $\Gamma'$ and that of valence $x_2$ is adjacent to the other one}.
We are in the situation of part 0 in Fig.~\ref{T2move:case1:fig}, and we proceed as follows, always referring to Fig.~\ref{T2move:case1:fig}:
\begin{figure}[htbp]
\faifig{Quella alle pagine 35 in basso e 36 in alto}
{combinatorial-move-b-2-1}
{The move $T_2$ at the level of dessins d'enfant. Case 1\label{T2move:case1:fig}}
\end{figure}
\begin{enumerate}
\item We attach to $(g-1)\cdot T$ a $1$-handle with attaching discs inside the two complementary regions, as in part 1;
\item We add to $\Gamma'$ the co-core of the $1$-handle, in the form of a black and a white vertex joined by two arcs, as in part 2;
\item We collapse along two curly segments, as in part 3.
\end{enumerate}

\medskip

\noindent\textsc{Case 2:} \emph{The two black vertices of valences $x_1$ and $x_2$ are completely surrounded by one of the complementary regions of $\Gamma'$}
(the light grey one $R$, say). Note that there must be an edge $e$ of $R$ separating it from the other region, as in part 0 of Fig.~\ref{T2move:case2:fig}. Then:
\begin{figure}[htbp]
\faifig{Quella alla pagina 36 in basso}
{combinatorial-move-b-2-2}
{The move $T_2$ at the level of dessins d'enfant. Case 2\label{T2move:case2:fig}}
\end{figure}
\begin{enumerate}
\item We add one black vertex and one white vertex on $e$ (part 1);
\item We attach to $(g-1)\cdot T$ a $1$-handle with attaching discs in $R$ (part 2);
\item We collapse along two curly segments as in part 3.\qedhere
\end{enumerate}
\end{proof}

\begin{prop}[Move $T_3$]\label{T3:move:prop}
For $g\geqslant1$, $x,y\geqslant3$ and $3\leqslant s\leqslant d-3$ the following is a reduction move:
$$\begin{array}{cl}
T_3: & \calD=(\matebold{g}\cdot T,S,\matebold{d},3;\partition{\matebold{x},\matebold{y}}*\rho_1,\partition{\bm{2},\bm{2}}*\rho_2,\partition{\matebold{s},\matebold{d-s}})\\
\leadsto & \calD'=((\bm{g-1})\cdot T,S,\bm{d-4},3;
    \partition{\bm{x-2},\bm{y-2}}*\rho_1,\rho_2,\partition{\bm{s-2},\bm{d-s-2}}).
\end{array}$$
\end{prop}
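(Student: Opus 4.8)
The plan is to imitate the proof of move $T_2$ (Proposition~\ref{T2:move:prop}), using dessins d'enfant and collapses of curly segments. Assuming $\calD'$ is realizable, fix via Proposition~\ref{dessins:prop} a dessin d'enfant $\Gamma'$ in $(g-1)\cdot T$ whose black vertices carry the valences recorded in $\partition{x-2,y-2}*\rho_1$, whose white vertices carry the valences in $\rho_2$, and whose two complementary regions $R_1,R_2$ have lengths $s-2$ and $d-s-2$ (these are positive precisely because $x,y\geqslant3$ and $3\leqslant s\leqslant d-3$, which is exactly why these bounds appear in the statement). Let $b_1,b_2$ be the black vertices of $\Gamma'$ of valences $x-2$ and $y-2$. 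From $\Gamma'$ I want to build a dessin $\Gamma$ in $g\cdot T$ realizing $\calD$, by: attaching a $1$-handle to $(g-1)\cdot T$, so the genus rises by one; inserting, near the handle, a small new bipartite piece carrying two new white vertices of valence $2$ (these supply the extra $\partition{2,2}$ in $\pi_2$) together with four new edges (so the degree goes up from $d-4$ to $d$); and finally collapsing along two curly segments, arranged so that the only effect on the black side is to raise the valence of $b_1$ from $x-2$ to $x$ and that of $b_2$ from $y-2$ to $y$, leaving the number of black vertices unchanged. If the new piece and the handle are placed correctly, the two complementary regions become discs of lengths $s$ and $d-s$, everything else is untouched, and $\Gamma$ realizes $\calD$.

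As in the proof of $T_2$, the construction splits into two cases, according to how $b_1$ and $b_2$ sit with respect to $R_1$ and $R_2$, since this governs where the attaching discs of the handle must go. In Case~1, $b_1$ lies on the boundary of one of the two regions and $b_2$ on the boundary of the other: one attaches the handle with one attaching disc inside each of $R_1,R_2$, inserts the new piece as (essentially) the co-core of the handle, and collapses, directly parallel to Figure~\ref{T2move:case1:fig}. In Case~2, $b_1$ and $b_2$ are both completely surrounded by a single region $R$; since only two regions exist and the surface is connected, some edge $e$ of $\partial R$ separates $R$ from the other region, so one subdivides $e$, attaches the handle with both attaching discs inside $R$, and collapses, parallel to Figure~\ref{T2move:case2:fig}. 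In each case, compared with $T_2$, one extra unit of valence is transferred to each of the two black vertices and one extra unit of length to each of the two complementary regions.

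The one genuinely delicate point, beyond this bookkeeping, is to draw the inserted bipartite piece and route the two curly segments so that, after the handle attachment and the collapses, \emph{every} complementary region of $\Gamma$ is again an open disc and \emph{exactly} the two intended regions grow, each by two units of length --- in particular, so that the bigons or short cycles appearing near the handle do not leave behind a spurious region of length $1$ or $2$. Drawing the correct figure, in analogy with those accompanying $T_2$, is the step I expect to be the main obstacle; the rest is routine. Finally, the verification that $\calD'$ is a candidate branch datum with no trivial partition whenever $\calD$ is one follows at once from the Riemann-Hurwitz condition~(\ref{general:RH}) and Remark~\ref{nontrivial:rem}, again using $x,y\geqslant3$ and $3\leqslant s\leqslant d-3$.
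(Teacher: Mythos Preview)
Your plan is correct and matches the paper's proof essentially step for step: the same two cases, the same handle attachment, and the same collapse of two curly segments. The only detail you leave implicit is the exact shape of the inserted piece --- in Case~1 the co-core is a $4$-cycle with \emph{two black and two white} vertices (the two new black ones are then fused into $b_1,b_2$ by the collapses), and in Case~2 the edge $e$ is subdivided by two black and two white new vertices; once you draw this, the length and valence bookkeeping you describe goes through exactly.
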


\begin{proof}
The structure of the proof is similar to the previous one.

\medskip

\noindent\textsc{Case 1:}
\emph{The black vertex of valence $x-2$ is adjacent to one complementary region of $\Gamma'$ and that of valence $y-2$ is adjacent to the other one},
as in part 0 of Fig.~\ref{T3move:case1:fig}. Then:
\begin{figure}[htbp]
\faifig{Quella a pagina 37}
{combinatorial-move-b-3-1}
{The move $T_3$ at the level of dessins d'enfant. Case 1\label{T3move:case1:fig}}
\end{figure}
\begin{enumerate}
\item We attach to $(g-1)\cdot T$ a $1$-handle with attaching discs inside the two complementary regions, as in part 1;
\item We add to $\Gamma'$ the co-core of the $1$-handle, in the form of two black and two white vertices joined by four arcs, as in part 2;
\item We collapse along two curly segments as in part 3.
\end{enumerate}

\medskip

\noindent\textsc{Case 2:} \emph{The two black vertices of valences $x-2$ and $y-2$ are completely surrounded by one of the complementary regions of $\Gamma'$}
(the light grey one $R$, say). Take an edge $e$ of $R$ separating it from the other region, as in part 0 of Fig.~\ref{T3move:case2:fig}. Then:
\begin{figure}[htbp]
\faifig{Quella a pagina 38}
{combinatorial-move-b-3-2}
{The move $T_3$ at the level of dessins d'enfant. Case 2\label{T3move:case2:fig}}
\end{figure}
\begin{enumerate}
\item We add two black and two white vertices on $e$ (part 1);
\item We attach to $(g-1)\cdot T$ a $1$-handle with attaching discs in $R$ (part 2);
\item We collapse along two curly segments as in part 3.\qedhere
\end{enumerate}
\end{proof}

The next move $T_4$ is similar to the previous ones, in particular to $T_3$, but the proof of its validity is more delicate, because in passing
from a realization of $\calD'$ to one of $\calD$ we have to act on vertices of different colours.

\begin{prop}[Move $T_4$]\label{T4:move:prop}
For $g\geqslant1$, $x\geqslant4$, $y\geqslant3$ and $2\leqslant s\leqslant d-2$ the following is a reduction move:
$$\begin{array}{cl}
T_4: & \calD=(\matebold{g}\cdot T,S,\matebold{d},3;\partition{\matebold{x}}*\rho_1,\partition{\matebold{y}}*\rho_2,\partition{\matebold{s},\matebold{d-s}})\\
\leadsto & \calD'=((\bm{g-1})\cdot T,S,\bm{d-2},3;
    \partition{\bm{x-2}}*\rho_1,\partition{\bm{y-2}}*\rho_2,\partition{\bm{s-1},\bm{d-s-1}}).
\end{array}$$
\end{prop}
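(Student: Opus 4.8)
The plan is to follow the same pattern as the proofs of $T_2$ and $T_3$: use Proposition~\ref{dessins:prop} to turn realizability into the existence of a dessin d'enfant, take a dessin $\Gamma'$ in $(g-1)\cdot T$ realizing $\calD'$, and build from it a dessin $\Gamma$ in $g\cdot T$ realizing $\calD$ by attaching a $1$-handle that carries a small bipartite graph and then collapsing a pair of curly segments. Concretely, $\Gamma'$ has a black vertex $b$ of valence $x-2\geqslant 2$, a white vertex $w$ of valence $y-2\geqslant 1$, and two complementary regions $R_1,R_2$ of lengths $s-1\geqslant 1$ and $d-s-1\geqslant 1$. Comparing $\calD$ with $\calD'$, the dessin $\Gamma$ we need must have the same number of black and of white vertices as $\Gamma'$, two more edges, genus one more, the valence of $b$ raised from $x-2$ to $x$, that of $w$ raised from $y-2$ to $y$, and each of the two region lengths raised by one. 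All of this is achieved by: attaching to $(g-1)\cdot T$ a $1$-handle whose two attaching discs lie one in each region (this accounts for the genus going up by one); drawing on the handle the co-core of the $1$-handle, in the form of one new black vertex $b^{*}$ and one new white vertex $w^{*}$ joined by two arcs (this adds the two new edges); and collapsing a curly segment from $b^{*}$ to $b$ and one from $w^{*}$ to $w$ (this absorbs the two new vertices into $b$ and $w$ and raises their valences to $x$ and $y$). One then checks by a direct count of sectors that the complement of the resulting graph is a union of two open discs whose lengths are exactly $s$ and $d-s$, each original region having gained one unit, and Proposition~\ref{dessins:prop} gives the realizability of $\calD$.

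To make the construction precise one must position $b$, $w$ and the handle compatibly, and as for $T_2$ and $T_3$ this splits into two cases. In the first case there are two distinct regions, one incident to $b$ and the other to $w$; relabelling if necessary, $b$ touches $R_1$ and $w$ touches $R_2$. Then one attaches the $1$-handle with one attaching disc in $R_1$ close to $b$ and the other in $R_2$ close to $w$, places $b^{*}$ near the first foot and $w^{*}$ near the second, with the two arcs of the co-core running over the handle, and collapses the curly segment from $b^{*}$ to $b$ inside $R_1$ and the one from $w^{*}$ to $w$ inside $R_2$; one verifies region by region that the lengths become $s$ and $d-s$.

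In the remaining case $b$ and $w$ are both incident to one and the same region, which we may call $R_1$. Exactly as in Case~2 of the proofs of $T_2$ and $T_3$, connectedness of $\Gamma'$ (it is the $1$-skeleton of a cell decomposition of $(g-1)\cdot T$) forces the existence of an edge $e$ having $R_1$ on one side and $R_2$ on the other. One subdivides $e$ by inserting there the new black vertex $b^{*}$ and the new white vertex $w^{*}$ --- this is what lets $R_2$ grow even though $b$ and $w$ are trapped in $R_1$ --- then attaches the $1$-handle with both attaching discs in $R_1$, and finally collapses a curly segment from $b^{*}$ to $b$ and one from $w^{*}$ to $w$, both inside $R_1$; these two collapses simultaneously turn $R_1$, which had ceased to be a disc when the handle was attached, back into a disc.

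The delicate point --- and the reason this move is harder than $T_2$ and $T_3$ --- is precisely that the two curly segments to be collapsed now end at vertices of \emph{different} colours: in $T_2$ and $T_3$ both of them joined the new gadget to black vertices, so they could be performed one after the other along a single ``black chain'', whereas here one collapse is black--black and the other white--white, and they must be realized by two simultaneously disjoint curly segments. In the second case the cyclic position forced on $b^{*},w^{*}$ along $e$ relative to $b,w$ on $\partial R_1$ may \emph{a priori} obstruct such a disjoint choice, and this is where the extra room created inside $R_1$ by the attached $1$-handle is used. In the first case the analogous subtlety is that the two arcs of the co-core must be glued to $b$ on sectors separated by old edges of $b$ (which is why one needs $x-2\geqslant 2$, i.e.\ $x\geqslant 4$), whereas at $w$ they may be consecutive (so $y-2\geqslant 1$, i.e.\ $y\geqslant 3$, suffices); checking in each of these sub-configurations that the complement stays a union of two discs of lengths $s$ and $d-s$ is the heart of the argument, and is carried out with the help of the figures.
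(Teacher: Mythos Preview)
Your outline matches the paper's in Case~1, but Case~2 has a genuine gap. You correctly note that the two curly segments now have endpoints of different colours and that the cyclic position of $b^*,w^*$ on the subdivided edge $e$ relative to $u,v$ on $\partial R$ may cause trouble, but your resolution --- ``this is where the extra room created inside $R_1$ by the attached $1$-handle is used'' --- does not suffice. After attaching the handle, $R$ becomes a once-punctured torus $R'$, and for the collapse to yield a dessin realizing $\calD$ one needs $R'\setminus(c_1\cup c_2)$ to be a \emph{single} disc, not merely that $c_1,c_2$ be disjoint. If $c_1:b^*\to u$ is non-separating then $R'\setminus c_1$ is an annulus whose two boundary circles each carry exactly one of the two arcs into which $b^*,u$ cut $\partial R$; so whenever the chosen occurrences of $w^*$ and $v$ lie on the \emph{same} such arc (the ``unlinked'' situation), every arc $c_2:w^*\to v$ separates the annulus and the complement is disconnected. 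The handle provides no way around this, and there are dessins $\Gamma'$ for which every choice of occurrence of $u$ and of $v$ is unlinked relative to $b^*,w^*$.

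The paper handles this by inserting, before the handle is attached, a preliminary step in which $\Gamma'$ is \emph{modified}: two carefully chosen edges $\alpha,\beta$ are erased and redrawn so as to force a cyclic pattern $u,v,u,e$ on $\partial R$, which guarantees a linked pairing and hence that the final complement is a single disc. That modification is where the hypothesis $x\geqslant4$ is genuinely used --- one needs at least two occurrences of $u$ on $\partial R$ to carry out the edge swap. Your attribution of $x\geqslant4$ to Case~1 (``the two arcs of the co-core must be glued to $b$ on sectors separated by old edges of $b$'') is therefore misplaced: the paper's Case~1 construction works verbatim for any $x-2\geqslant1$.
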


\begin{proof}
The argument is again similar to those showing Propositions~\ref{T2:move:prop} and~\ref{T3:move:prop},
starting with a dessin d'enfant $\Gamma'$ realizing $\calD'$, with two cases to consider.
But in case 2 we will possibly have to change the given $\Gamma'$
before acting on it, so the explanation is longer.
We call $u$ the black vertex of $\Gamma'$ of valence $x-2$, and $v$ the white one of valence $y-2$.

\medskip

\noindent\textsc{Case 1:}
\emph{$u$ is adjacent to one complementary region of $\Gamma'$ and $v$ is adjacent to the other one}, as in part 0 of Fig.~\ref{T4move:case1:fig}. Then:
\begin{figure}[htbp]
\faifig{Quella a pagina 39 in alto}
{combinatorial-move-b-4-1}
{The move $T_4$ at the level of dessins d'enfant. Case 1\label{T4move:case1:fig}}
\end{figure}
\begin{enumerate}
\item We attach to $(g-1)\cdot T$ a $1$-handle with attaching discs inside the two complementary regions, as in part 1;
\item We add to $\Gamma'$ the co-core of the $1$-handle, in the form of a black and a white vertex joined by two arcs, as in part 2;
\item We collapse along two curly segments as in part 3.
\end{enumerate}

\medskip

\noindent\textsc{Case 2:}
\emph{$u$ and $v$ are completely surrounded by one of the complementary regions of $\Gamma'$}
(the light grey one $R$, say). We claim that up to changing $\Gamma'$ we can realize the following situation:
\emph{There is an edge $e$ shared by the two complementary regions of $\Gamma'$ such that along $\partial R$ (for one of the two possible orientations)
we see (perhaps not consecutively) $u$, then $e$
with its white end first and its black end last, and then $v$}, as in Fig.~\ref{uev:fig}.
\begin{figure}[htbp]
\faifig{Figura come a met\`a di pagina 39, ma togliendo i 2\\ e facendo capire che i due vertici in mezzo sono gli estremi di $e$}{combinatorial-move-b-4-2-boundary-order}{The configuration on $\partial R$ that we aim to realize\label{uev:fig}}
\end{figure}
Note that along $\partial R$ (after we unwind it) we see
$u$ exactly $x-2\geqslant 2$ times and $v$ exactly $y-2\geqslant 1$ times.
If there is a shared edge $e$ such that $u,v,u,e$ appear in this order (perhaps not consecutively)
on $\partial R$, as in Fig.~\ref{uvue:fig} then of course we have the desired configuration.

\begin{figure}[htbp]
\faifig{Figura come la parte a sinistra di quella a pagina 39 in basso della tesi
ma marcando gli estremi di $e$ con dei trattini perpendicolari al bordo del disco
la freccia al centro e la parte destra vanno tagliate}
{combinatorial-move-b-4-2-easy-case}
{A configuration that guarantees that of Fig.~\ref{uev:fig}\label{uvue:fig}}
\end{figure}

Otherwise, we have to modify $\Gamma'$. We proceed as follows, referring to
Fig.~\ref{newgammaprime:fig}.
\begin{figure}[htbp]
\faifig{Quella a pagina 40 della tesi}
{combinatorial-move-b-4-2-2-prelim}
{How to modify $\Gamma'$ to get a configuration as in Fig.~\ref{uvue:fig}\label{newgammaprime:fig}}
\end{figure}

\begin{enumerate}
\item If no $u,v,u,e$ configuration appears along $\partial R$ there is an arc $A$ of $\partial R$
that contains all the occurrences of $u$, and none of $v$ or any shared edge.
Similarly, there is an arc $B$ of $\partial R$ that contains all the occurrences of $v$, none of $u$ and no shared edge.
Note that $u$ and $v$ are not the ends of a shared edge, since they are completely surrounded by $R$.
\item Choose an orientation of $\partial R$ (counterclockwise in the picture),
take  the first occurrence of $u$ in $A$ and let $\alpha$ be the edge immediately after it. Since $\alpha$ is not shared, it appears again
on $\partial R$, with the opposite orientation. Note that $\alpha$ cannot occur again immediately before the first appearance of $u$,
otherwise $u$ would have valence $1$, therefore it will occur somewhere else on $A$.
\item Take the first occurrence of $v$ in $B$ and let $\beta$ be the edge immediately before in $\partial R$.
Since $\beta$ is not shared, it will also occur elsewhere on $\partial R$.
\item Let $a$ be the end of $\alpha$ other than $u$ and $b$ be end of $\beta$ other than $v$.
Erase the edges $\alpha$ and $\beta$ and draw two new ones, connecting $u$ to $v$ and $a$ to $b$ as shown in the picture, getting a new dessin d'enfant $\Gamma'$.
One easily sees that $\Gamma'$ still realizes $\calD'$, and now on the boundary of the
light grey complementary region we see $u,v,u$ in this order, without shared edges in between (because there was none on $B$).
But some shared edge exists, so we get a configuration as in Fig.~\ref{uvue:fig}.
\end{enumerate}

Our claim is proved and we can conclude as follows (see Fig.~\ref{T4move:case2:fig}):
\begin{figure}[htbp]
\faifig{Quella a pagina 41 della tesi}
{combinatorial-move-b-4-2-2}
{The move $T_4$ at the level of dessins d'enfant. Case 2\label{T4move:case2:fig}}
\end{figure}
\begin{enumerate}
\item We add a black and a white vertex on $e$ (part 1);
\item We attach to $(g-1)\cdot T$ a $1$-handle with attaching discs in $R$ (part 2);
\item We collapse along two curly segments as in part 3. \qedhere
\end{enumerate}
\end{proof}

% ~ 
\section{Monodromy and algebraic reduction moves}\label{monodromy:sec}

We recall here the monodromy reformulation of the Hurwitz existence problem and some results from~\cite{EKS}, deducing the existence
of two more reduction moves.
In this section we introduce some notation not used in the rest of the paper
except within the proof of Theorem~\ref{morepoints:thm}.
For $d\geqslant 2$ we denote by $\Pi_d$ the set of partitions of $d$, and for $\pi\in\Pi_d$ we set $v(\pi)=d-\ell(\pi)$, where $\ell(\pi)$ is the length of $\pi$.
Using $v$, the Riemann-Hurwitz condition~(\ref{general:RH}) for a candidate branch datum
$\left(\Theta,\Sigma,d,n;\pi_1,\ldots,\pi_n\right)$
can be written as
$$d\chi(\Sigma)-\chi(\Theta)=v(\pi_1)+\ldots+v(\pi_n),$$
hence for $(g\cdot T,S,d,n;\pi_1,\ldots,\pi_n)$ as
\begin{equation}
\label{v-version:RH}
v(\pi_1)+\ldots+v(\pi_n)=2(d+g-1).
\end{equation}
For $\alpha\in\permu_d$, the group of permutations of $\{1,\ldots,d\}$, we denote by $\pi(\alpha)\in\Pi_d$ the partition of $d$ given by the lengths
of the cycles of $\alpha$ (including the trivial ones of length $1$) and we set $\ell(\alpha)=\ell(\pi(\alpha))$ and $v(\alpha)=v(\pi(\alpha))$.

\paragraph{Review of known results}
We begin by citing a result of~\cite{Hur1891} and deducing one from~\cite{EKS}:

\begin{prop}\label{monodromy:prop}
A candidate branch datum $(g\cdot T,S,d,n;\pi_1,\ldots,\pi_n)$ is realizable if and only if there exist $\theta_1,\ldots,\theta_n\in\permu_d$ such that:
\begin{enumerate}
\item $\pi(\theta_i)=\pi_i$ for $i=1,\ldots,n$;
\item $\theta_1\cdots\theta_n=\operatorname{id}$;
\item The subgroup $\left\langle\theta_1,\ldots,\theta_n\right\rangle$ of $\permu_d$ acts transitively on $\{1,\ldots,d\}$.
\end{enumerate}
\end{prop}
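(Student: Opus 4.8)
The plan is to invoke the classical dictionary between branched covers of the sphere and transitive tuples of permutations with trivial product — this is the content of the results of~\cite{Hur1891} and~\cite{EKS} being cited — and then to check the one point that is specific to our set-up, namely that the Riemann--Hurwitz condition built into the definition of a candidate branch datum is exactly what pins the genus of the realizing surface to be $g$.

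For the forward implication, suppose $f:g\cdot T\to S$ realizes $\calD$. Order the branching points $p_1,\ldots,p_n\in S$ so that $p_j$ carries the partition $\pi_j$, set $S_0=S\setminus\{p_1,\ldots,p_n\}$ and $\Theta_0=f^{-1}(S_0)$, so that $f_0=f|_{\Theta_0}:\Theta_0\to S_0$ is an honest $d$-sheeted covering. Fix a basepoint $*\in S_0$ and identify the fibre $f_0^{-1}(*)$ with $\{1,\ldots,d\}$. Choose a standard generating system $x_1,\ldots,x_n$ of $\pi_1(S_0,*)$ with $x_j$ a small loop around $p_j$, subject to the single relation $x_1\cdots x_n=1$, and let $\theta_j\in\permu_d$ be the monodromy permutation of $x_j$, that is, the permutation of $f_0^{-1}(*)$ induced by path-lifting along $x_j$. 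Then $\theta_1\cdots\theta_n$ is the monodromy of a null-homotopic loop, hence $\operatorname{id}$, which is condition~2; connectedness of $g\cdot T$ gives transitivity of $\langle\theta_1,\ldots,\theta_n\rangle$, which is condition~3; and, restricting $f_0$ to a punctured disc around $p_j$, the connected coverings of that punctured disc correspond to the cycles of $\theta_j$, with a $k$-cycle giving a point of $f^{-1}(p_j)$ of local degree $k$, so $\pi(\theta_j)=\pi_j$, which is condition~1.

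Conversely, given $\theta_1,\ldots,\theta_n$ satisfying 1--3, the transitive action of $\pi_1(S_0,*)$ on $\{1,\ldots,d\}$ sending $x_j$ to $\theta_j$ determines, up to equivalence, a connected $d$-sheeted covering $f_0:\Theta_0\to S_0$; since $S_0$ is orientable and $f_0$ is a local homeomorphism, $\Theta_0$ is orientable. Filling in each end over $p_j$ with a point, according to the corresponding cycle of $\theta_j$ (locally the model $z\mapsto z^k$), produces a branched cover $f:\Theta'\to S$ with $\Theta'$ connected, closed and orientable and with branch partitions $\pi_1,\ldots,\pi_n$. Applying Riemann--Hurwitz~(\ref{general:RH}) to $f$ gives $\chi(\Theta')=2d-\bigl(v(\pi_1)+\cdots+v(\pi_n)\bigr)$ in the notation of~(\ref{v-version:RH}); but $\calD$ being a candidate branch datum means precisely that this right-hand side equals $\chi(g\cdot T)=2-2g$, again via~(\ref{v-version:RH}). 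As a connected closed orientable surface is determined up to homeomorphism by its Euler characteristic, $\Theta'\cong g\cdot T$, so $f$ realizes $\calD$.

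I do not expect a genuine obstacle: the argument is bookkeeping once the covering-space dictionary is set up. The only point demanding a little care — and the reason for citing~\cite{EKS} alongside~\cite{Hur1891} — is the very last step, ensuring that the compatibility encoded in the definition of ``candidate branch datum'' forces the genus, not merely the Euler characteristic, of the realizing surface; all the real difficulty in the paper lies downstream, in deciding for a given datum whether a tuple $(\theta_1,\ldots,\theta_n)$ as above exists.
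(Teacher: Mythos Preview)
Your argument is correct and is the standard covering-space proof of this classical fact. Note, however, that the paper does not actually prove this proposition: it simply cites it as a result of~\cite{Hur1891} and moves on, so there is no ``paper's own proof'' to compare against. Your write-up is what one would expect such a proof to look like; the remark in your final paragraph about~\cite{EKS} is slightly off, though --- the paper's mention of~\cite{EKS} in that passage refers to the \emph{next} proposition (the one on products of permutations), not to this one.
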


\begin{prop}\label{smallv:existence:prop}
If $\pi_1,\pi_2\in\Pi_d$ and $v(\pi_1)+v(\pi_2)\leqslant d-1$
there exist $\theta_1,\theta_2\in\permu_d$ such that $\pi(\theta_i)=\pi_i$ for $i=1,2$ and
$v(\theta_1\cdot\theta_2)=v(\pi_1)+v(\pi_2)$.
\end{prop}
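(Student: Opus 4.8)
The plan is to argue by induction on the degree $d$, using the standard fact that $v$ depends only on the cycle type and changes by exactly $\pm1$ under left multiplication by a transposition: if $\tau$ is a transposition and $\sigma\in\permu_d$, then $v(\tau\sigma)=v(\sigma)+1$ when the two points moved by $\tau$ lie in different cycles of $\sigma$, and $v(\tau\sigma)=v(\sigma)-1$ when they lie in the same cycle.

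First I would dispose of the degenerate situations: if $\pi_1=\partition{1,\ldots,1}$ we take $\theta_1=\operatorname{id}$ and any $\theta_2$ with $\pi(\theta_2)=\pi_2$, so that $\theta_1\theta_2=\theta_2$ and $v(\theta_1\theta_2)=v(\pi_2)=v(\pi_1)+v(\pi_2)$; the case where $\pi_2$ is trivial is symmetric, and both cover $d=1$. So in the inductive step I may assume $d\geqslant2$ and that $\pi_1,\pi_2$ are both nontrivial.

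The crucial elementary remark is that the hypothesis $v(\pi_1)+v(\pi_2)\leqslant d-1$ forces at least one of the two partitions to contain a part equal to $1$: if all parts of $\pi_i$ were $\geqslant2$ we would have $\ell(\pi_i)\leqslant d/2$, hence $v(\pi_i)\geqslant d/2$, and then $v(\pi_1)+v(\pi_2)\geqslant d$. Since $v(\theta_1\theta_2)=v(\theta_2\theta_1)$ the two partitions play symmetric roles, so I may assume $\pi_2$ has a part $1$ and write $\pi_2=\partition{1}*\rho_2$; as $\pi_1$ is nontrivial it has a part $a\geqslant2$, and I write $\pi_1=\partition{a}*\rho_1$. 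I then apply the inductive hypothesis in degree $d-1$ to the partitions $\partition{a-1}*\rho_1$ and $\rho_2$: a direct count shows their lengths are $\ell(\pi_1)$ and $\ell(\pi_2)-1$, hence their $v$-values are $v(\pi_1)-1$ and $v(\pi_2)$, and their sum is $v(\pi_1)+v(\pi_2)-1\leqslant(d-1)-1$, so the hypothesis is met. This produces $\theta_1',\theta_2'\in\permu_{d-1}$ of the prescribed cycle types with $v(\theta_1'\theta_2')=v(\pi_1)+v(\pi_2)-1$.

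Finally I would put the point $d$ back: regarding $\permu_{d-1}$ as the stabilizer of $d$ inside $\permu_d$, choose a point $z\in\{1,\ldots,d-1\}$ lying in a cycle of $\theta_1'$ of length $a-1$ (a fixed point of $\theta_1'$ when $a=2$), and set $\theta_1=(z\ d)\theta_1'$, $\theta_2=\theta_2'$. Multiplying $\theta_1'$ on the left by $(z\ d)$ merely inserts $d$ into that cycle, so $\pi(\theta_1)=\partition{a}*\rho_1=\pi_1$, while $\pi(\theta_2)=\partition{1}*\rho_2=\pi_2$; and since $d$ is a fixed point of $\theta_1'\theta_2'$ whereas $z\neq d$, the product $\theta_1\theta_2=(z\ d)(\theta_1'\theta_2')$ is obtained by merging two cycles, so $v(\theta_1\theta_2)=v(\theta_1'\theta_2')+1=v(\pi_1)+v(\pi_2)$. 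I do not expect a genuine obstacle here: the only things to be careful about are the length bookkeeping that keeps the reduced data within the hypothesis, and the observation that we never need the subgroup generated by $\theta_1,\theta_2$ to be transitive --- which the statement does not require --- so that the plain ``add a fixed point, then merge it back in'' step is legitimate.
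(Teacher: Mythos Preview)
Your argument is correct: the induction on $d$ is set up properly, the observation that at least one $\pi_i$ must contain a $1$ is exactly what makes the reduction to degree $d-1$ go through, and the final ``insert the point $d$ via a transposition'' step does what you claim on both $\theta_1$ and on the product.

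The paper does not argue this way at all: it simply quotes Lemma~4.2 of~\cite{EKS}, which says that if $v(\pi_1)+v(\pi_2)=d-t$ with $t\geqslant1$ then one can choose $\theta_1,\theta_2$ so that $\langle\theta_1,\theta_2\rangle$ has exactly $t$ orbits and $\pi(\theta_1\theta_2)$ is precisely the partition given by the orbit lengths; the equality $v(\theta_1\theta_2)=v(\pi_1)+v(\pi_2)$ is then immediate. That approach is shorter on the page and yields a stronger structural conclusion (control over the orbits, not merely over $v$), at the cost of outsourcing the work. Your approach is fully self-contained and elementary, needing nothing beyond the $\pm1$ behaviour of $v$ under a transposition; it gives exactly what the statement asks for and no more, which is all that the subsequent applications of move $A_1$ require.
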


\begin{proof}
According to Lemma~4.2 in~\cite{EKS}, if $v(\pi_1)+v(\pi_2)=d-t$ for some $t\geqslant 1$
then there exist $\theta_1,\theta_2\in\permu_d$ such that $\pi(\theta_i)=\pi_i$ for $i=1,2$,
the subgroup $\left\langle\theta_1,\theta_2\right\rangle$ of $\permu_d$ has precisely $t$ orbits,
and $\pi(\theta_1\cdot\theta_2)$ is the partition of $d$ given by the lengths of these orbits.
This implies that $v(\theta_1\cdot\theta_2)=d-t$, so
$v(\theta_1\cdot\theta_2)=v(\pi_1)+v(\pi_2)$.
\end{proof}

We then reformulate Corollary~4.4 and Lemma~4.5 from~\cite{EKS}, respectively:

\begin{prop}\label{largev:truecycle:existence:prop}
Let $\pi_1,\pi_2\in\Pi_d$ be such that $v(\pi_1)+v(\pi_2)\geqslant d-1$ and $v(\pi_1)+v(\pi_2)\equiv d-1\pmod{2}$.
Then there exist $\theta_1,\theta_2\in\permu_d$ such that $\pi(\theta_i)=\pi_i$ for $i=1,2$ and
$\pi(\theta_1\cdot\theta_2)=\partition{d}$.
\end{prop}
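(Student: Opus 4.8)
The plan is to recognize the statement as nothing more than the length-$1$ realizability theorem (Theorem~\ref{length-1:thm}) read through the monodromy dictionary (Proposition~\ref{monodromy:prop}). First I would dispose of the degenerate cases: if $d=1$ there is nothing to prove, and if one of $\pi_1,\pi_2$, say $\pi_1$, equals $\partition{1,\ldots,1}$, then $v(\pi_1)=0$ forces $v(\pi_2)=d-1$, i.e.\ $\pi_2=\partition d$, so one can take $\theta_1=\operatorname{id}$ and $\theta_2$ any $d$-cycle. Thus I may assume $d\geqslant 2$ and that $\pi_1,\pi_2$ are both non-trivial.

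Next I would set $k=\bigl(v(\pi_1)+v(\pi_2)-(d-1)\bigr)/2$. The congruence hypothesis says exactly that $k\in\matZ$, and the inequality hypothesis says exactly that $k\geqslant 0$, so $k\cdot T$ is a genuine closed orientable surface. I then form the symbol $\calD=(k\cdot T,S,d,3;\pi_1,\pi_2,\partition d)$: its three partitions are non-trivial (for $\partition d$ this uses $d\geqslant 2$), and since $v(\partition d)=d-1$ the Riemann--Hurwitz identity~(\ref{v-version:RH}) for $\calD$ reads $v(\pi_1)+v(\pi_2)+(d-1)=2(d+k-1)$, which is precisely the definition of $k$. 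Hence $\calD$ is a candidate branch datum, and it has a partition of length $1$, namely $\partition d$; so Theorem~\ref{length-1:thm} makes $\calD$ realizable. By Proposition~\ref{monodromy:prop} there are $\theta_1,\theta_2,\theta_3\in\permu_d$ with $\pi(\theta_i)=\pi_i$ for $i=1,2$, with $\pi(\theta_3)=\partition d$, and with $\theta_1\theta_2\theta_3=\operatorname{id}$. Then $\theta_1\theta_2=\theta_3^{-1}$, which is again a $d$-cycle, so $\pi(\theta_1\theta_2)=\partition d$, as required.

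In this route there is essentially no obstacle: the only thing to verify by hand is the elementary equivalence between the two numerical hypotheses on $v(\pi_1)+v(\pi_2)$ and the assertion ``$k\cdot T$ is a surface on which $\calD$ satisfies Riemann--Hurwitz'', and all the real content is imported from Theorem~\ref{length-1:thm}. A self-contained alternative, using only Proposition~\ref{smallv:existence:prop}, would go by induction on $k$: the case $k=0$ is Proposition~\ref{smallv:existence:prop} with $t=1$ (a product with $v=d-1$ is automatically a $d$-cycle), and the inductive step would take a realization $\widetilde\theta_1,\widetilde\theta_2$ of partitions $\widetilde\pi_1,\widetilde\pi_2$ obtained from $\pi_1,\pi_2$ by splitting one part of each — possible since $v(\pi_i)\geqslant 2$ when $k\geqslant 1$, so that each $v$ drops by $1$, their sum by $2$, preserving parity and the lower bound — with $\widetilde\theta_1\widetilde\theta_2$ a $d$-cycle, and would replace them by $\theta_1=\widetilde\theta_1\tau$ and $\theta_2=\tau\widetilde\theta_2$ for a transposition $\tau$, leaving the product $\widetilde\theta_1\widetilde\theta_2$ untouched while merging one pair of cycles in each factor. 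The genuine difficulty in that route — exactly the phenomenon met in the proof of Proposition~\ref{T4:move:prop} — is that a single $\tau$ must merge the prescribed pair of cycles of $\widetilde\theta_1$ \emph{and} the prescribed pair of cycles of $\widetilde\theta_2$ simultaneously, which in general is only achievable after first replacing the chosen realization $(\widetilde\theta_1,\widetilde\theta_2)$ by a suitable other one.
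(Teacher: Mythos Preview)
Your main argument is correct and clean: the hypotheses on $v(\pi_1)+v(\pi_2)$ are exactly what is needed to make $\calD=(k\cdot T,S,d,3;\pi_1,\pi_2,\partition{d})$ a candidate branch datum, and then Theorem~\ref{length-1:thm} together with Proposition~\ref{monodromy:prop} gives the permutations. The degenerate cases are handled properly.

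The paper, however, does not prove Proposition~\ref{largev:truecycle:existence:prop} at all: it simply records it as a reformulation of Corollary~4.4 of~\cite{EKS}. So your route is genuinely different from the paper's (non-)proof. What your argument really establishes is that Proposition~\ref{largev:truecycle:existence:prop} and Theorem~\ref{length-1:thm} are two faces of the same coin via the monodromy dictionary. That is a perfectly good observation, but be aware of a circularity at the level of the source: in~\cite{EKS}, the length-$1$ realizability result (their Proposition~5.2, our Theorem~\ref{length-1:thm}) is itself proved using the algebraic lemmas of their Section~4, and Corollary~4.4 is among those. So while your derivation is formally valid within the present paper (which imports both statements as black boxes), it does not provide an independent proof of the content. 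Your sketched inductive alternative, by contrast, is much closer in spirit to how~\cite{EKS} actually establishes these permutation lemmas, and you correctly identify the one nontrivial point in that route: arranging that a single transposition $\tau$ simultaneously merges the correct cycle pair in \emph{each} factor.
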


\begin{prop}\label{largev:twocycles:existence:prop}
Let $\pi_1,\pi_2\in\Pi_d$ be such that $v(\pi_1)+v(\pi_2)\geqslant d$ and $v(\pi_1)+v(\pi_2)\equiv d\pmod{2}$.
Then there exist $\theta_1,\theta_2\in\permu_d$ such that $\pi(\theta_i)=\pi_i$ for $i=1,2$ and
$$
\pi(\theta_1\cdot\theta_2)=\begin{cases}
\partition{d/2,d/2} &\hbox{\ if\ \ \ }\pi_1=\pi_2=\partition{2,\ldots,2}\\
\partition{d-1,1}   &\hbox{\ otherwise}.
\end{cases}
$$
\end{prop}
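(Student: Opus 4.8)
The plan is to deduce this from Lemma~4.5 of~\cite{EKS}, exactly as Proposition~\ref{smallv:existence:prop} was deduced from Lemma~4.2 there; the reformulation is routine once one recalls that $v(\alpha)$ is the least number of transpositions in a factorization of $\alpha$ and that, for $n=3$ and $\pi_3\in\{\partition{d},\partition{d-1,1},\partition{d/2,d/2}\}$, the Riemann--Hurwitz relation~(\ref{v-version:RH}) reads $v(\pi_1)+v(\pi_2)=d-1+2g$ in the first case and $v(\pi_1)+v(\pi_2)=d+2g$ in the other two, so the hypotheses here are exactly "$g\geqslant0$''. Before invoking~\cite{EKS} I would isolate an elementary observation that also explains the dichotomy: if neither $\pi_1$ nor $\pi_2$ has an entry $\geqslant3$ then, writing $a_i$ for the number of $2$'s in $\pi_i$, we have $v(\pi_i)=a_i$ and $2a_i\leqslant d$, so $v(\pi_1)+v(\pi_2)\geqslant d$ forces $a_1=a_2=d/2$ and hence $\pi_1=\pi_2=\partition{2,\ldots,2}$.

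In the exceptional case $\pi_1=\pi_2=\partition{2,\ldots,2}$ (so $d$ is even) I would argue by hand. If $\theta_1,\theta_2$ are any fixed-point-free involutions, consider the graph on $\{1,\ldots,d\}$ whose edges are the transpositions of $\theta_1$ and of $\theta_2$: its components are even cycles, and a component of length $2\ell$ produces exactly two cycles of length $\ell$ of $\theta_1\theta_2$; hence $\theta_1\theta_2$ has every cycle length with even multiplicity, which rules out $\partition{d-1,1}$ but allows $\partition{d/2,d/2}$. And $\partition{d/2,d/2}$ is attained: for $\theta_1=(1\,2)(3\,4)\cdots(d{-}1\;d)$ and $\theta_2=(2\,3)(4\,5)\cdots(d\;1)$ the product $\theta_1\theta_2$ is a shift $i\mapsto i\pm2$ of $\matZ/d\matZ$, which has two cycles of length $d/2$.

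In the remaining case I would aim at $\partition{d-1,1}$ by a splitting-and-remerging argument reducing to Proposition~\ref{largev:truecycle:existence:prop}. After relabelling we may assume $\pi_1$ has an entry $p\geqslant3$; write $\pi_1=\partition{p}*\rho_1$ and put $\pi_1^{\flat}=\partition{p-1,1}*\rho_1\in\Pi_d$, so $v(\pi_1^{\flat})=v(\pi_1)-1$ and therefore $v(\pi_1^{\flat})+v(\pi_2)\geqslant d-1$ with the parity of $d-1$. By Proposition~\ref{largev:truecycle:existence:prop} there are $\theta_1^{\flat}$ of type $\pi_1^{\flat}$ and $\theta_2$ of type $\pi_2$ with $c:=\theta_1^{\flat}\theta_2$ a $d$-cycle. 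Now $\theta_1^{\flat}$ has a fixed point $b$ and a $(p-1)$-cycle; letting $\tau$ be the transposition joining $b$ to one of the letters of that cycle, the permutation $\theta_1:=\theta_1^{\flat}\tau$ has type $\pi_1$ (we have merged the $(p-1)$-cycle and the fixed point into a single $p$-cycle), while $\theta_1\theta_2=c\,\tau'$ with $\tau'=\theta_2^{-1}\tau\theta_2$ a transposition. Hence $\theta_1\theta_2$ is a $d$-cycle times a transposition, of type $\partition{k,d-k}$ for some $k$, and it remains to choose the data — which entry of $\pi_1$ to split and how, which fixed point and which short cycle of $\theta_1^{\flat}$ to merge, and where the two letters of $\tau'$ sit in the cyclic order of $c$ — so that these two letters are adjacent in $c$, forcing $k=1$ and the type $\partition{d-1,1}$.

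The hard part is precisely this last choice: for a careless configuration the two letters of $\tau'$ need not be adjacent in $c$, and showing that adjacency can always be arranged — possibly after a local modification of the pair $(\theta_1^{\flat},\theta_2)$ in the style of the proof of move $T_4$ (Proposition~\ref{T4:move:prop}) — is exactly the combinatorial content of~\cite[Lemma~4.5]{EKS}, which I would cite rather than redo. The structural observation in the first paragraph makes this plausible exactly when some entry exceeds $2$, and when no entry exceeds $2$ the graph obstruction of the second paragraph shows that $\partition{d-1,1}$ is genuinely unattainable and $\partition{d/2,d/2}$ is the correct substitute.
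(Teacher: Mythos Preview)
Your proposal is correct and matches the paper's approach: the paper simply introduces this proposition as a reformulation of~\cite[Lemma~4.5]{EKS} and gives no proof at all, so your plan to cite that lemma is exactly what the paper does. The additional explanatory content you supply --- the parity/dichotomy observation, the explicit construction and graph obstruction for the $\pi_1=\pi_2=\partition{2,\ldots,2}$ case, and the partial reduction to Proposition~\ref{largev:truecycle:existence:prop} --- is correct and goes well beyond what the paper offers, but since you ultimately defer the hard combinatorial step to~\cite[Lemma~4.5]{EKS} as well, the two approaches coincide.
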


\paragraph{Algebraic reduction moves}
We introduce here two moves $A_1$ and $A_2$ used in Section~\ref{morepoints:sec}.

\begin{prop}[Move $A_1$]\label{A1:move:prop}
If $\pi_1,\pi_2\in\Pi_d$ and $v(\pi_1)+v(\pi_2)\leqslant d-1,$ then for all
$\theta_1,\theta_2\in\permu_d$ such that $\pi(\theta_i)=\pi_i$ for $i=1,2$ and
$v(\theta_1\cdot\theta_2)=v(\pi_1)+v(\pi_2)$,
setting $\pi=\pi(\theta_1\cdot\theta_2)$ we have that the following is a reduction move:
$$\begin{array}{cl}
A_1: & \calD=(g\cdot T,S,d,\bm{n};\bm{\pi_1},\bm{\pi_2},\pi_3,\ldots,\pi_n)\\
\leadsto & \calD'=(g\cdot T,S,d,\bm{n-1};\bm{\pi},\pi_3,\ldots,\pi_n).
\end{array}$$
\end{prop}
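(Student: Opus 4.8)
The plan is to verify both clauses in the definition of reduction move directly from the monodromy criterion of Proposition~\ref{monodromy:prop}. For the first clause, I would check the Riemann--Hurwitz condition in its $v$-form~(\ref{v-version:RH}): from $\calD$ we have $v(\pi_1)+v(\pi_2)+v(\pi_3)+\cdots+v(\pi_n)=2(d+g-1)$, and since $v(\pi)=v(\theta_1\cdot\theta_2)=v(\pi_1)+v(\pi_2)$ by hypothesis, $\calD'$ satisfies $v(\pi)+v(\pi_3)+\cdots+v(\pi_n)=2(d+g-1)$, which is~(\ref{v-version:RH}) for $\calD'$. Moreover $\pi$ is non-trivial, since $\pi_1,\pi_2$ non-trivial forces $v(\pi_1),v(\pi_2)\geqslant 1$, hence $v(\pi)\geqslant 2$ and $\pi\neq\partition{1,\ldots,1}$. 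So $\calD'$ is a candidate branch datum. (The inequality $v(\pi_1)+v(\pi_2)\leqslant d-1$ is used only to guarantee, via Proposition~\ref{smallv:existence:prop}, that permutations $\theta_1,\theta_2$ with the stated property exist at all.)

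For the second clause, assume $\calD'$ is realized. By Proposition~\ref{monodromy:prop} there exist $\eta,\theta_3,\ldots,\theta_n\in\permu_d$ with $\pi(\eta)=\pi$ and $\pi(\theta_i)=\pi_i$ for $i=3,\ldots,n$, with $\eta\cdot\theta_3\cdots\theta_n=\operatorname{id}$, and with $\langle\eta,\theta_3,\ldots,\theta_n\rangle$ transitive on $\{1,\ldots,d\}$. The key observation is that $\theta_1\cdot\theta_2$ and $\eta$ have the same cycle type, namely $\pi(\theta_1\cdot\theta_2)=\pi=\pi(\eta)$, hence are conjugate in $\permu_d$. Choosing $\sigma\in\permu_d$ with $\sigma(\theta_1\cdot\theta_2)\sigma^{-1}=\eta$ and setting $\theta_1'=\sigma\theta_1\sigma^{-1}$, $\theta_2'=\sigma\theta_2\sigma^{-1}$, we get $\pi(\theta_i')=\pi(\theta_i)=\pi_i$ for $i=1,2$ and $\theta_1'\cdot\theta_2'=\sigma(\theta_1\cdot\theta_2)\sigma^{-1}=\eta$, whence $\theta_1'\cdot\theta_2'\cdot\theta_3\cdots\theta_n=\eta\cdot\theta_3\cdots\theta_n=\operatorname{id}$; and $\langle\theta_1',\theta_2',\theta_3,\ldots,\theta_n\rangle$ contains $\eta=\theta_1'\theta_2'$, hence contains $\langle\eta,\theta_3,\ldots,\theta_n\rangle$ and is therefore transitive on $\{1,\ldots,d\}$. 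By Proposition~\ref{monodromy:prop} the tuple $(\theta_1',\theta_2',\theta_3,\ldots,\theta_n)$ realizes $\calD$.

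The argument is short, and the single genuine idea is the conjugation trick: the monodromy permutation realizing the fused partition $\pi$ in $\calD'$ coincides, up to conjugacy, with the product $\theta_1\cdot\theta_2$, so conjugating the whole tuple simultaneously splits that factor back into two factors of cycle types $\pi_1$ and $\pi_2$ while preserving both the relation $\theta_1'\theta_2'\theta_3\cdots\theta_n=\operatorname{id}$ and transitivity. I do not expect a real obstacle here; one should only keep in mind that the move is meant to be applied with $n\geqslant 3$, so that $\calD'$ has $n-1\geqslant 2$ partitions and is genuinely a candidate branch datum.
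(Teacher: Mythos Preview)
Your proof is correct and follows exactly the same approach as the paper's: verify the Riemann--Hurwitz condition in its $v$-form to see that $\calD'$ is a candidate branch datum (with $\pi$ non-trivial because $v(\pi)>0$), and then use conjugacy of permutations with equal cycle type to split the monodromy factor $\eta$ of $\calD'$ back into a product $\theta_1'\theta_2'$ of the required cycle types. The paper compresses your conjugation argument into the single phrase ``we can assume that $\theta=\theta_1\cdot\theta_2$'', but the content is identical.
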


\begin{proof}
Version~(\ref{v-version:RH}) of the Riemann-Hurwitz relation implies that if $\calD$ as in the statement
is a candidate branch datum, $\calD'$ also is
(since $v(\pi)=v(\pi_1)+v(\pi_2)>0$ we see that $\pi$ is non-trivial). Now if $\theta,\theta_3,\ldots,\theta_n$ realize $\calD'$ according to Proposition~\ref{monodromy:prop},
we can assume that $\theta=\theta_1\cdot\theta_2$, whence $\theta_1,\theta_2,\theta_3,\ldots,\theta_n$ realize $\calD$.
\end{proof}

\begin{prop}[Move $A_2$]\label{A2:move:prop}
Given $d\geq3$ and $\pi_1,\ldots,\pi_n\in\Pi_d$ with $v(\pi_1)+v(\pi_2)\geqslant d-1$ and
$v(\pi_3)+\ldots+v(\pi_n)\geqslant d-1$, if $g=\frac12(v(\pi_1)+\ldots+v(\pi_n))-d+1$ is a non-negative integer
then there exists $g'\in\matN$ and $\pi\in\Pi_d$ such that
the following is a reduction move:
$$\begin{array}{cl}
A_2: & \calD=(\bm{g}\cdot T,S,d,\bm{n};\bm{\pi_1},\bm{\pi_2},\pi_3,\ldots,\pi_n)\\
\leadsto & \calD'=(\bm{g'}\cdot T,S,d,\bm{n-1};\bm{\pi},\pi_3,\ldots,\pi_n),
\end{array}$$
where $\pi=\partition{d/2,d/2}$ if $\pi_1=\pi_2=\partition{2,\ldots,2}$, and
$$
\pi=\begin{cases}
\partition{d}     &\hbox{\ if \ \ }v(\pi_1)+v(\pi_2)\equiv d-1\pmod{2}\\
\partition{d-1,1} &\hbox{\ if \ \ }v(\pi_1)+v(\pi_2)\equiv d\pmod{2}
\end{cases}
$$
otherwise.
\end{prop}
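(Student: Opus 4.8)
The plan is to produce explicit permutations realizing $\calD'$ by combining two blocks of the decomposition provided by the earlier propositions of Section~\ref{monodromy:sec}, exactly mirroring the proof of Move $A_1$ but in the "large-$v$" regime.

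First I would check that $\calD'$ is a candidate branch datum whenever $\calD$ is. Using version~(\ref{v-version:RH}) of Riemann-Hurwitz, the hypothesis that $g$ is a non-negative integer says precisely that $v(\pi_1)+\ldots+v(\pi_n)=2(d+g-1)$, so $\calD$ satisfies~(\ref{v-version:RH}). The new partition $\pi$ has $v(\pi)$ equal to $d$, $d-1$, or $d-2$ according to the three cases, and in each case $v(\pi)>0$, so $\pi$ is non-trivial. Setting $g'$ by the equation $v(\pi)+v(\pi_3)+\ldots+v(\pi_n)=2(d+g'-1)$, one reads off $g'=g-\frac12\bigl(v(\pi_1)+v(\pi_2)-v(\pi)\bigr)$, which is an integer because $v(\pi_1)+v(\pi_2)$ and $v(\pi)$ have the same parity by construction (this is the content of the two congruence cases, and is automatic when $\pi_1=\pi_2=\partition{2,\ldots,2}$ since then $v(\pi_1)+v(\pi_2)=2d-4$ and $v(\pi)=d$, both even iff $d$ is even — here one uses $d\geqslant 3$ and evenness of $d$ forced by $\pi_i=\partition{2,\dots,2}$). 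One must also verify $g'\geqslant 0$: since $v(\pi_3)+\ldots+v(\pi_n)\geqslant d-1$ and $v(\pi)\geqslant d-2$, we get $v(\pi)+v(\pi_3)+\ldots+v(\pi_n)\geqslant 2d-3$, hence $2(d+g'-1)\geqslant 2d-3$, i.e. $g'\geqslant -\tfrac12$, so $g'\geqslant 0$ as an integer. (A slightly more careful parity count using the congruences gives $g'\geqslant 0$ cleanly in each case.)

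Next I would establish that realizability of $\calD'$ implies that of $\calD$. Suppose $\theta,\theta_3,\ldots,\theta_n\in\permu_d$ realize $\calD'$ via Proposition~\ref{monodromy:prop}. Apply Proposition~\ref{largev:twocycles:existence:prop} (when $v(\pi_1)+v(\pi_2)\geqslant d$ and $\equiv d\pmod 2$, which covers the $\partition{d/2,d/2}$ and $\partition{d-1,1}$ cases) or Proposition~\ref{largev:truecycle:existence:prop} (when $v(\pi_1)+v(\pi_2)\geqslant d-1$ and $\equiv d-1\pmod 2$, giving $\partition{d}$) to obtain $\sigma_1,\sigma_2\in\permu_d$ with $\pi(\sigma_i)=\pi_i$ and $\pi(\sigma_1\sigma_2)=\pi$. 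Since $\pi(\sigma_1\sigma_2)=\pi=\pi(\theta)$, the two permutations $\sigma_1\sigma_2$ and $\theta$ are conjugate in $\permu_d$; replacing $\sigma_1,\sigma_2$ by $\gamma\sigma_1\gamma^{-1},\gamma\sigma_2\gamma^{-1}$ for a suitable $\gamma$ we may assume $\sigma_1\sigma_2=\theta$. Now set $\theta_1=\sigma_1$, $\theta_2=\sigma_2$. Then $\pi(\theta_i)=\pi_i$ for $i=1,2$, and $\theta_1\theta_2\theta_3\cdots\theta_n=\theta\theta_3\cdots\theta_n=\operatorname{id}$. Transitivity of $\langle\theta_1,\ldots,\theta_n\rangle$ follows from transitivity of $\langle\theta,\theta_3,\ldots,\theta_n\rangle$, since $\theta=\theta_1\theta_2$ lies in the former group, so the former contains the latter. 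Hence $\theta_1,\ldots,\theta_n$ realize $\calD$ by Proposition~\ref{monodromy:prop}.

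**The main obstacle** I anticipate is bookkeeping the parity conditions so that $g'$ is simultaneously an integer and non-negative, and making sure the three cases (the $\partition{d/2,d/2}$ case, and the two congruence cases producing $\partition{d}$ and $\partition{d-1,1}$) are exhaustive and mutually consistent with the hypotheses $v(\pi_1)+v(\pi_2)\geqslant d-1$. In particular, when $\pi_1=\pi_2=\partition{2,\ldots,2}$ one should note $d$ is even, $v(\pi_1)+v(\pi_2)=2d-4\geqslant d-1$ requires $d\geqslant 3$ (hence $d\geqslant 4$), and the relevant existence statement is the first branch of Proposition~\ref{largev:twocycles:existence:prop}; everything else is a routine transcription of the argument for Move $A_1$ with Proposition~\ref{smallv:existence:prop} replaced by Propositions~\ref{largev:truecycle:existence:prop} and~\ref{largev:twocycles:existence:prop}.
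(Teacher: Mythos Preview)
Your proposal is correct and follows the same approach as the paper: invoke Propositions~\ref{largev:truecycle:existence:prop} or~\ref{largev:twocycles:existence:prop} to obtain $\theta_1,\theta_2$ with $\pi(\theta_1\theta_2)=\pi$, check that $\calD'$ is a candidate datum via the parity $v(\pi)\equiv v(\pi_1)+v(\pi_2)\pmod 2$ and the bound $v(\pi)\geqslant d-2$, and then lift a realization of $\calD'$ to one of $\calD$ exactly as in the proof of Move~$A_1$. Two small arithmetic slips (harmless for the argument): $v(\pi)\in\{d-1,d-2\}$, never $d$; and when $\pi_1=\pi_2=\partition{2,\ldots,2}$ one has $v(\pi_1)+v(\pi_2)=d$, not $2d-4$.
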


\begin{proof}
Set $v_j=v(\pi_j)$. If $v_1+v_2\equiv d-1\pmod{2}$ we can apply Proposition~\ref{largev:truecycle:existence:prop}
getting $\theta_1,\theta_2\in\permu_d$ such that $\pi(\theta_i)=\pi_i$ for $i=1,2$ and
$\pi(\theta_1\cdot\theta_2)=\partition{d}$.
If instead $v_1+v_2\equiv d\pmod{2}$ we note that $v_1+v_2\geqslant d-1$ implies $v_1+v_2\geqslant d$, so we can apply
Proposition~\ref{largev:twocycles:existence:prop}, getting
$\theta_1,\theta_2\in\permu_d$ such that $\pi(\theta_i)=\pi_i$ for $i=1,2$ and
$$
\pi(\theta_1\cdot\theta_2)=\begin{cases}
\partition{d/2,d/2} &\hbox{\ if\ \ \ }\pi_1=\pi_2=\partition{2,\ldots,2}\\
\partition{d-1,1}   &\hbox{\ otherwise}.
\end{cases}
$$

Setting $\pi=\pi(\theta_1\cdot\theta_2)$ we claim that in both cases if $\calD$ as in the statement
is a candidate branch datum, so is $\calD'$ for a suitable $g'\in\matN$. Once this is done, the conclusion is precisely as in the previous proof.
First of all, $\pi$ is always non-trivial. Next, we must show that~(\ref{v-version:RH}) holds for $\calD'$, namely that
$$v(\pi)+v_3+\ldots+v_n=2(d+g'-1)$$
holds for some $g'\in\matN$, or equivalently that
$$z=v(\pi)+v_3+\ldots+v_n-2d+2$$
is even and non-negative. Now in each of the three cases one readily sees that $v(\pi)\equiv v_1+v_2\pmod{2}$, but
$v_1+v_2+v_3+\ldots+v_n$ is even by~(\ref{v-version:RH}) for $\calD$, so indeed $z$ is even.
Moreover $v(\pi)\geqslant d-2$ and $v_3+\ldots+v_n\geqslant d-1$, so $z\geqslant-1$ and the conclusion follows.
\end{proof}

% ~ 
\section{Statement and exceptionality}\label{statement:sec}

We now state the result to which the present paper is devoted:

\begin{thm}\label{main:thm}
A candidate branch datum
$(g\cdot T,S,d,n;\pi_1,\ldots,\pi_n)$ with $\ell(\pi_n)=2$ is exceptional if and only if it is one of the following:
\begin{enumerate}
\item[(1)] $(S,S,12,3;\partition{2,2,2,2,2,2},\partition{1,1,1,3,3,3},\partition{6,6})$;
\item[(2)] $(S,S,2k,3;\partition{2,\ldots,2},\partition{2,\ldots,2},\partition{s,2k-s})$ with $k\geqslant 2$ and $s\neq k$;
\item[(3)] $(S,S,2k,3;\partition{2,\ldots,2},\partition{1,2,\ldots,2,3},\partition{k,k})$ with $k\geqslant 2$;
\item[(4)] $(S,S,4k+2,3;\partition{2,\ldots,2},\partition{1,\ldots,1,k+1,k+2},\partition{2k+1,2k+1})$ with $k\geqslant 1$;
\item[(5)] $(S,S,4k,3;\partition{2,\ldots,2},\partition{1,\ldots,1,k+1,k+1},\partition{2k-1,2k+1})$ with $k\geqslant 2$;
\item[(6)] $(S,S,kh,3;\partition{h,\ldots,h},\partition{1,\ldots,1,k+1},\partition{ph,(k-p)h})$ with $h\geqslant 2$, $k\geqslant 2$ and $0<p<k$;
\item[(7)] $(T,S,6,3;\partition{3,3},\partition{3,3},\partition{2,4})$;
\item[(8)] $(T,S,8,3;\partition{2,2,2,2},\partition{4,4},\partition{3,5})$;
\item[(9)] $(T,S,12,3;\partition{2,2,2,2,2,2},\partition{3,3,3,3},\partition{5,7})$;
\item[(10)] $(T,S,16,3;\partition{2,2,2,2,2,2,2,2},\partition{1,3,3,3,3,3},\partition{8,8})$;
\item[(11)] $(T,S,2k,3;\partition{2,\ldots,2},\partition{2,\ldots,2,3,5},\partition{k,k})$ with $k\geqslant 5$;
\item[(12)] $(2\cdot T,S,8,4;\partition{2,2,2,2},\partition{2,2,2,2},\partition{2,2,2,2},\partition{3,5})$;
\item[(13)] $((n-3)\cdot T,S,4,n;\partition{2,2},\ldots,\partition{2,2},\partition{1,3})$ with $n\geqslant3$.
\end{enumerate}
\end{thm}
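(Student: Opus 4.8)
The plan is to prove the two implications of the theorem separately. For the implication ``$\calD$ in the list $\Rightarrow$ $\calD$ exceptional'', items (1)--(6) are exactly Pakovich's list of exceptions in genus $0$ (see \cite{Pako}), item (13) is the classical degree-$4$ family, and item (11) is a known genus-$1$ family, so in all of these cases I would simply cite the literature. The genuinely new non-realizability statements are the sporadic data (7)--(10) in genus $1$ and (12) in genus $2$. For these I would use Proposition~\ref{dessins:prop} to translate the problem into the non-existence of a dessin d'enfant on $T$ (respectively on $2\cdot T$) with the prescribed black and white valences and region lengths, and then rule out every candidate dessin by a short finite case analysis on how the few large vertices and regions can be arranged; Proposition~\ref{monodromy:prop} provides the equivalent permutation-theoretic reformulation, convenient for the same bookkeeping and matching the computer enumeration recalled in the Appendix.

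For the reverse implication I must show that every candidate branch datum $\calD=(g\cdot T,S,d,n;\pi_1,\ldots,\pi_{n-1},\partition{s,d-s})$ not appearing in the list is realizable. Here I would run the double induction announced in the introduction. The base of everything is genus $0$: for $g=0$ and any $n$ the realizability of all data not among (1)--(6) is \cite{Pako}, and the sub-case $s=1$ is settled for all $g$ and $n$ in \cite{EKS}, so I may assume $s\geqslant2$. I would first dispose of $n=3$ by induction on $g$: given $\calD=(g\cdot T,S,d,3;\pi_1,\pi_2,\partition{s,d-s})$ with $g\geqslant1$ and not in the list, the aim is to exhibit one of the topological reduction moves $T_1,\ldots,T_4$ of Section~\ref{dessins:sec} whose output $\calD'$ has genus $g-1$ and is again not in the list; then $\calD'$ is realizable (by the inductive hypothesis, or by \cite{Pako} if $g-1=0$), hence so is $\calD$. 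The combinatorial heart of this step is to verify that, after possibly swapping $\pi_1$ and $\pi_2$, at least one move is available: roughly, if some $\pi_i$ carries two entries equal to $2$ one aims to apply $T_3$ or $T_2$, if the largest entries of $\pi_1$ and $\pi_2$ are both big enough one applies $T_4$, and if $\pi_1$ contains $\partition{1,1,3}$ one applies $T_1$. The partitions escaping every move are highly structured (all entries equal to a common value, all entries $2$, or a long block of $1$'s), and a datum of such a shape that is not already in the list can be realized by an explicit construction.

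The case $n\geqslant4$ is then treated by induction on $n$, with base case $n=3$ just established. Given $\calD$ with $n\geqslant4$ not in the list (so $\calD$ is neither item (12) nor a member of family (13)), I would pick two of the partitions $\pi_1,\ldots,\pi_{n-1}$ and fuse them into a single partition by the algebraic move $A_1$ when their $v$-values are small (Proposition~\ref{A1:move:prop}) or $A_2$ when they are large (Proposition~\ref{A2:move:prop}), obtaining a candidate branch datum $\calD'$ with $n-1$ branching points still carrying the length-$2$ partition $\partition{s,d-s}$; by the inductive hypothesis $\calD'$ is realizable unless it belongs to the (very short) exceptional list for $n-1$, and the $v$-counting in the proofs of those two propositions, together with the freedom to choose which two partitions to fuse, should always allow a choice of $\calD'$ outside the list. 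The few boundary configurations where this fails — essentially data close to family (13) with $d=4$, or to item (12) with $n=4$ — are small enough to be resolved by a direct construction or by the computer-aided lemmas.

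The main difficulty is not any single reduction move but the bookkeeping that prevents the induction from ever landing on an exceptional datum: for each of the thirteen items one has to determine which moves could in principle produce it and check that, whenever that would happen, an alternative move or a direct realization is available. The delicate cases are the rigidly structured data — all valences equal to $2$, all valences equal to a common $h$, or data with a large block of $1$'s — which sit exactly where several moves degenerate at once; pushing these through, together with a finite list of genuinely sporadic small-degree exceptions, is where the real work of Sections~\ref{threepoints:sec} and~\ref{morepoints:sec} lies.
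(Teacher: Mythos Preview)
Your overall architecture matches the paper's: exceptionality of the listed data in Section~\ref{statement:sec}, then induction on $g$ for $n=3$ using the moves $T_1,\ldots,T_4$ (Theorems~\ref{torus:thm} and~\ref{bigg:thm}), then induction on $n$ using $A_1,A_2$ (Theorem~\ref{morepoints:thm}), with Propositions~\ref{xy22:prop} and~\ref{dessins:special-families:prop} handling the rigid shapes where every move degenerates, and Zheng-style computation clearing small degrees.

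One concrete slip: you propose to handle item~(12) via Proposition~\ref{dessins:prop}, but that proposition is stated only for $n=3$, while (12) has $n=4$. The paper flags this explicitly and instead uses an \emph{ad hoc} extension of dessins (pulling back a segment through three of the four branch points, yielding a $4$-valent graph on $2\cdot T$) together with a finite case analysis, or alternatively Zheng's algorithm. Your fallback to Proposition~\ref{monodromy:prop} would work in principle, but ``a short finite case analysis'' on permutations in $\permu_8$ with three $\partition{2,2,2,2}$ factors is not as short as you suggest; some genuine work or the computer check is needed here.

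A second point worth sharpening: in the $n=3$ induction you say the goal is to find \emph{some} $T_i$ whose output $\calD'$ avoids the list. The paper's treatment of $g=1$ (Theorem~\ref{torus:thm}) is noticeably more delicate than $g\geqslant2$, because the output lands in genus~$0$ where the exceptional list is long (items~(1)--(6)); several moves have to be tried in succession, and when the first choice of $T_2$ or $T_4$ lands on item~(6) one must reapply a move with different parameters. Your sketch is correct in spirit but understates how many sub-cases this generates.
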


\begin{rem}\emph{By Remark~\ref{nontrivial:rem} we always assume from now on that $n\geqslant 3$.}
\end{rem}

\begin{rem}\emph{For the case where the candidate covering surface is the sphere $S$, Pakovich~\cite{Pako} lists $7$ exceptional families rather than the $6$ above.
Our statement is however coherent with his one, because two of his families  are actually equivalent to each other.
In fact, items (4) and (5) in his statement, using our current notation, are respectively
\begin{itemize}
  \item[(4)] $(S,S,d,3;\partition{2,\ldots,2},\partition{1,\ldots,1,q,q},\partition{2q-3,d-2q+3})$ with $q\geqslant3$;
  \item[(5)] $(S,S,d,3;\partition{2,\ldots,2},\partition{1,\ldots,1,q,q},\partition{2q-1,d-2q+1})$ with $q\geqslant3$.
\end{itemize}
For both of them the Riemann-Hurwitz condition~(\ref{general:RH}) reads
$$2-\left(\frac d2+(d-2q+2)+2\right)=d(2-3)\quad\Rightarrow\quad d=4q-4.$$
So in (4) we have $d-2q+3=2q-1$ and in (5) we have $d-2q+1=2q-3$, whence (4) and (5) are actually the same.
Setting $k=q-1$ we have
$$k\geqslant2,\quad\ q=k+1,\quad d=4k,\quad 2q-3=2k-1,\quad
2q-1=2k+1$$
so both candidate branch data are encoded as our (5).}
\end{rem}

\begin{rem}\emph{The items in our statement have a few overlaps. For instance item (2) with $k=2$ coincides with item (6) with $k=h=2$.
Easy extra restrictions on the parameters appearing would lead to a list without overlaps, but we will not make this explicit.}
\end{rem}

In the rest of the section we explain why items~(1) to~(13) in Theorem~\ref{main:thm} are indeed exceptional, referring to the existing literature and in some cases also 
providing direct proofs.

\paragraph{Exceptionality with covering surface the sphere}
We begin with items~(1) to~(6) in Theorem~\ref{main:thm}.
Their exceptionality was proved in~\cite{Pako} within the general delicate argument showing the
realizability of the candidate branch data not falling in these families, but we show here that it is also possible to establish it in a more elementary fashion.

For item~(1), one could use the data of~\cite{Zh06}, but a proof via dessins d'enfant is also very easy:

\begin{prop}
The candidate branch datum (1) in Theorem~\ref{main:thm} is exceptional.
\end{prop}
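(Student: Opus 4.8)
The plan is to establish non-realizability by contradiction, using the dessins d'enfant reformulation of Proposition~\ref{dessins:prop}: the datum~(1) is realizable if and only if the sphere $S$ carries a dessin d'enfant with six black vertices, all of valence~$2$, white vertices of valences $1,1,1,3,3,3$, and two complementary regions, both of length~$6$. So I would assume such a $\Gamma$ exists and work towards an absurdity.

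The first step is to collapse the (bivalent) black vertices: forgetting each black vertex and regarding its two incident edges as a single edge between the adjacent white vertices turns $\Gamma$ into a graph $\Gamma_0$ embedded in $S$---with the \emph{same} complementary regions as $\Gamma$---having six edges and six vertices, of valences $1,1,1,3,3,3$. The bookkeeping to carry out here is that the length of a region of $\Gamma$ equals the number of edge-sides of $\Gamma_0$ bordering that region (each edge of $\Gamma_0$ having arisen from two edges of $\Gamma$ meeting at a black vertex); so, writing $\deg F$ for that edge-side count, both regions of $\Gamma_0$ satisfy $\deg F=6$. Since $\Gamma_0$ is connected (because $\Gamma$ is) with as many edges as vertices, the Euler relation $6-6+F=\chi(S)$ gives it exactly two regions $F_1,F_2$, and it contains exactly one cycle $C$; the vertices of $C$ have valence $\geqslant 2$, so $C$ uses only the three trivalent vertices and has length $\ell\in\{1,2,3\}$.

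The combinatorial core is then short. I would use the standard dichotomy for an edge $e$ of a graph whose complementary regions are discs: if $e$ lies on a cycle then its two sides belong to distinct regions (hence, there being only two, $e$ borders $F_1$ on exactly one side), while if $e$ lies on no cycle then both sides of $e$ belong to a single region (so $e$ contributes $0$ or $2$ to $\deg F_1$). Summing over the edges yields $\deg F_1\equiv\ell\pmod 2$, and $\deg F_1=6$ then forces $\ell=2$. Finally, for $\ell=2$ the cycle $C$ consists of two parallel edges through two of the trivalent vertices, so the third trivalent vertex $c$ lies off $C$, whence all three edges at $c$ lie on no cycle; since a non-cycle edge has the same region on both sides, the corners at $c$ all lie in one region, say $F_1$, so these three edges contribute $6$ to $\deg F_1$, and an edge of $C$ (distinct from them) borders $F_1$ as well, giving $\deg F_1\geqslant 7>6$---the contradiction sought.

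The step I expect to require the most care is the reduction to $\Gamma_0$ and its properties---verifying that collapsing bivalent black vertices is harmless, that region lengths turn into edge-side counts, and that connectedness and the two-region structure are inherited---together with the correct use of the ``cycle versus non-cycle edge'' incidence facts, which are what make the parity argument legitimate. Once that is in place the case analysis on $\ell$ is immediate. (A monodromy/permutation argument would also prove the claim, but the dessin route is the ``very easy'' one referred to in the text.)
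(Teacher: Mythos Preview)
Your argument is correct. Both you and the paper begin by suppressing the bivalent black vertices, reducing to a connected graph $\Gamma_0\subset S$ with six edges and white vertices of valences $1,1,1,3,3,3$, having two complementary discs. From here the paths diverge.

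The paper argues by explicit enumeration: such a $\Gamma_0$ arises from a fixed tree with three trivalent vertices and five free half-edges $A,B,C,D,E$ by joining two of these ends and capping the remaining three with univalent vertices; up to the evident symmetry of the tree there are six junctions to check, and the resulting face-length pairs are $\partition{1,11},\partition{4,8},\partition{5,7},\partition{3,9},\partition{2,10},\partition{5,7}$, none equal to $\partition{6,6}$.

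Your route is structural rather than enumerative: since $\Gamma_0$ has cyclomatic number~$1$, its unique cycle has length $\ell\in\{1,2,3\}$, and a parity count on edge-sides gives $\deg F_1\equiv\ell\pmod 2$, forcing $\ell=2$; then the off-cycle trivalent vertex $c$ drags three bridge edges into a single face, so $\deg F_1\geqslant 8$ (your $\geqslant 7$ is already enough). This avoids any case list and is conceptually cleaner; the paper's enumeration, on the other hand, exhibits all the face-length pairs that \emph{do} occur, which is extra information you do not extract.
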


\begin{proof}
A dessin d'enfant $\Gamma$ realizing (1),
ignoring the 2-valent black vertices,
would be a connected graph in $S$ with three $3$-valent and three $1$-valent white vertices, and $6$ edges.
Then $\Gamma$ is obtained from the graph of Fig.~\ref{1exceptional:fig}
\begin{figure}
\begin{center}
\begin{picture}(80,30)
\put(10,0){\line(0,1){13.5}}
\put(10,30){\line(0,-1){13.5}}
\put(40,30){\line(0,-1){13.5}}
\put(70,0){\line(0,1){13.5}}
\put(70,30){\line(0,-1){13.5}}
\put(11.5,15){\line(1,0){27}}
\put(41.5,15){\line(1,0){27}}
\put(-1,0){$A$}
\put(-1,25){$B$}
\put(28,25){$C$}
\put(72,25){$D$}
\put(72,0){$E$}
\put(10,15){\circle{3}}
\put(40,15){\circle{3}}
\put(70,15){\circle{3}}
\end{picture}
\end{center}
\mycap{A graph in $S$\label{1exceptional:fig}}
\end{figure}
by joining two of its free ends $A,B,C,D,E$ and putting a valence-$1$ vertex at the end of the other three.
Of the 10 possible junctions, up to symmetry we can consider only
$A$-$B$, $A$-$C$, $A$-$D$, $A$-$E$, $B$-$C$, $B$-$D$, that give for the lengths of the complementary regions respectively
$\partition{1,11}$, $\partition{4,8}$, $\partition{5,7}$, $\partition{3,9}$, $\partition{2,10}$, $\partition{5,7}$, so $\partition{6,6}$ does not appear.
\end{proof}

Item~(2) is exceptional because a dessin d'enfant realizing it would be a circle with $2k$ valence-$2$ vertices on it (of alternating colours), but
then its complementary regions would have lengths $\partition{k,k}$.
Exceptionality of item~(3) was shown in~\cite[Proposition 1.3]{PePe06}. That of items~(4) to~(6) perhaps follows from some
published result other than those in~\cite{Pako}, 
but showing it via dessins d'enfant is quite easy, so we do it.
We note that item~(6) for the special case $p=1$ is treated in~\cite[Proposition~5.7]{EKS}.

\begin{prop}
Items~(4) to~(6) in Theorem~\ref{main:thm} are exceptional.
\end{prop}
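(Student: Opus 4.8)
The plan is to show that a dessin d'enfant realizing any of the candidate branch data (4), (5) or (6) would impose contradictory conditions on the lengths of its complementary regions. By Proposition~\ref{dessins:prop}, since each of these data has $n=3$ and base surface $S$, realizability is equivalent to the existence of a dessin d'enfant $\Gamma$ in $S$ whose black vertices have valences given by $\pi_1$, whose white vertices have valences given by $\pi_2$, and whose two complementary regions have lengths given by $\pi_3=\partition{s,d-s}$. In all three cases $\pi_1=\partition{2,\ldots,2}$ (in case (6), $\pi_1 = \partition{h,\ldots,h}$, so one should contract the $h$-valent black vertices differently — see below), so it is convenient to first \emph{suppress} the $2$-valent black vertices: each such vertex, having exactly two incident edges, can be erased by fusing its two edges into one, turning $\Gamma$ into a graph $\widehat\Gamma$ on $S$ whose vertices are only the white ones and whose edges correspond to the black vertices of $\Gamma$. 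The region lengths are unchanged (a region of length $\ell$ in $\Gamma$ has $\ell$ black vertices on its boundary, hence $\ell$ edges of $\widehat\Gamma$). So $\widehat\Gamma$ is a connected graph on $S$ with exactly two faces, with prescribed vertex valences coming from $\pi_2$, and with the two faces having $s$ and $d-s$ edges on their boundaries respectively.

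Next I would extract the combinatorial constraints. Since $\widehat\Gamma\subset S$ has $V$ vertices, $E$ edges and $F=2$ faces, Euler's formula gives $V-E+2=2$, i.e.\ $E=V$; so $\widehat\Gamma$ has exactly one independent cycle, meaning it is a circle with finitely many trees attached. In cases (4) and (5), $\pi_2 = \partition{1,\ldots,1,k+1,k+2}$ or $\partition{1,\ldots,1,k+1,k+1}$: the $1$-valent white vertices are leaves of these trees, and only the two high-valence vertices (valences $k+1,k+2$ or $k+1,k+1$) can lie on the unique cycle or be branch points of the trees. A short case analysis — the cycle has length $1$ (a loop) or $2$ (a bigon), and every other edge is a tree edge traversed twice by the face boundaries — then pins down exactly which pairs $\{s,d-s\}$ can occur, and one checks that $\partition{2k+1,2k+1}$ (case 4) and $\partition{2k-1,2k+1}$ (case 5) are not among them, exactly as in the proof of the proposition about item (1). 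For case (6) one instead suppresses the $h$-valent black vertices only after noting that each such vertex, adjacent to $h$ edges all going to distinct white vertices of valence $\leqslant k+1$, forces a very rigid "star-of-stars" structure; alternatively one keeps $\Gamma$ as is and counts, using that $\pi_2=\partition{1,\ldots,1,k+1}$ has a single non-leaf white vertex, so $\Gamma$ (with its $2$-valent... here $h$-valent black vertices) is essentially a single "flower" and the two region lengths must be multiples of $h$ in a controlled pattern, forcing $\partition{ph,(k-p)h}$ to be \emph{unrealizable} by a parity or divisibility obstruction. The special subcase $p=1$ is covered by \cite[Proposition~5.7]{EKS}, which can be cited to shorten the argument.

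I expect the main obstacle to be the bookkeeping in case (6): because the black vertices have valence $h$ rather than $2$, suppressing them does not simply turn $\Gamma$ into a graph with prescribed vertex valences, and one must argue directly that the global structure of $\Gamma$ (a connected bipartite graph on $S$ with two faces, black valences all equal to $h$, and exactly one white vertex of valence $>1$) forces both faces to "wrap around" the central white vertex an integral number of $h$-blocks, while the Riemann--Hurwitz relation $\chi(S)-(\ell_1+\ell_2+\ell_3)=d(\chi(S)-3)$ simultaneously constrains how many black vertices there are; reconciling these shows $\partition{ph,(k-p)h}$ cannot appear. The other two cases are routine once the suppression trick reduces them to the one-cycle-plus-trees picture already used for item~(1).
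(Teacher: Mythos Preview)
Your treatment of (4) and (5) via suppressing the $2$-valent black vertices is correct and is exactly what the paper intends (it leaves these two cases to the reader, making only (6) explicit). The resulting one-cycle-plus-trees picture on $S$ and the short case analysis on whether the unique cycle is a loop or a bigon does the job: in the bigon case both region lengths are even while the targets are odd, and in the loop cases the achievable odd lengths miss the required values.

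For (6), however, your sketch is muddled and the stated obstruction points the wrong way. You write that the structure ``forces both faces to wrap around the central white vertex an integral number of $h$-blocks,'' and that this together with Riemann--Hurwitz yields a contradiction; but region lengths that are multiples of $h$ would be \emph{consistent} with $\partition{ph,(k-p)h}$, not in conflict with it. The actual argument is simpler than you anticipate and runs in the opposite direction. Since the only white vertex $w$ of valence $>1$ has valence $k+1$, connectedness forces all $k$ black vertices to be adjacent to $w$; the valence count then shows exactly one black vertex $b_1$ is joined to $w$ by two edges (so the unique cycle is this bigon) while the remaining $k-1$ are joined by one edge each. One complementary region then contains $a$ of these $k-1$ black ``stars'' (each contributing $h$ to the region length) together with $b$ of the $h-2$ leaves attached to $b_1$ (each contributing $1$), plus the contribution $1$ from the bigon itself, giving length $ah+b+1$ with $0\leqslant a\leqslant k-1$ and $0\leqslant b\leqslant h-2$. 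Since $1\leqslant b+1\leqslant h-1$, this is never a multiple of $h$, contradicting $\pi_3=\partition{ph,(k-p)h}$. That is the entire argument; there is no additional Riemann--Hurwitz bookkeeping to reconcile.
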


\begin{proof}
We will make the argument explicit for the hardest case~(6), 
leaving the easier~(4) and~(5) to the reader.

A dessin d'enfant $\Gamma\subset S$ realizing the (6), being connected, must appear as in Fig.~\ref{kh:exception:fig}
\begin{figure}
\include{Figs/preamble-exceptionality.tex}
\faifig{Quella a pagina 55 della tesi aggiungendo due volte a sinistra le graffe con $h-1$}
{exceptionality-composite}
{Proof of the exceptionality of item (6)\label{kh:exception:fig}}
\end{figure}
for some $0\leqslant a\leqslant k-1$ and $0\leqslant b\leqslant h-2$.
But then one sees that one of the complementary regions has length
$ah+b+1$ which cannot be $ph$ or $(k-p)h$ because it is not a multiple of $h$.
\end{proof}

\paragraph{Exceptionality with covering surface of positive genus}
As above for item~(1), items~(7) to~(10) in Theorem~\ref{main:thm} are very easily shown to be exceptional via dessins d'enfant,
but we do not exhibit a proof as all these cases fall within the experimental analysis of Zheng~\cite{Zh06}.
Exceptionality of item (11) was shown in~\cite[Proposition~1.2]{PePe06}.
We then turn to~(12), to which the technique of dessins d'enfant does not apply,
as there are four rather than three candidate branching points in $S$.

\begin{prop}
The candidate branch datum~(12) in Theorem~\ref{main:thm} is exceptional.
\end{prop}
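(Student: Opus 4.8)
The plan is to use the monodromy translation of Proposition~\ref{monodromy:prop}: the datum~(12) is realizable if and only if there exist $\theta_1,\theta_2,\theta_3,\theta_4\in\permu_8$ with $\theta_1\theta_2\theta_3\theta_4=\operatorname{id}$, with $\langle\theta_1,\theta_2,\theta_3,\theta_4\rangle$ transitive on $\{1,\ldots,8\}$, with $\theta_1,\theta_2,\theta_3$ fixed-point-free involutions, and with $\theta_4$ of cycle type $\partition{3,5}$. Since then $\theta_1\theta_2\theta_3=\theta_4^{-1}$ also has cycle type $\partition{3,5}$, it suffices to prove the following purely combinatorial statement, which ignores transitivity and is thus stronger than what is needed: \emph{a product of three fixed-point-free involutions of $\permu_8$ never has cycle type $\partition{3,5}$.}

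The starting point is the standard remark that a product $\alpha=\theta_1\theta_2$ of two fixed-point-free involutions of $\permu_{2m}$ has cycle type obtained by ``doubling'' a partition of $m$: the union of the two perfect matchings is a disjoint union of even cycles, and on a cycle of length $2a$ the product $\alpha$ acts as two $a$-cycles. For $m=4$ this leaves, as possible cycle types of $\alpha$, only $\partition{4,4}$, $\partition{3,3,1,1}$, $\partition{2,2,2,2}$, $\partition{2,2,1,1,1,1}$ and $\partition{1,\ldots,1}$. In the last three cases $\alpha$ is itself an involution, so setting $\sigma=\alpha\theta_3$ the group $\langle\alpha,\theta_3\rangle$ is dihedral and $\sigma$ is one of its rotations; on each orbit $O$ of this dihedral group one has either $|O|\leqslant 2$ (so $\sigma$ contributes there only cycles of length $\leqslant 2$), or $\sigma$ restricts to a single $|O|$-cycle while $\theta_3|_O$ is a reflection of an $|O|$-gon and hence fixed-point-free only if $|O|$ is even, or $\langle\sigma\rangle$ has two orbits inside $O$, which then have equal size and make $\sigma$ contribute a pair of equal-length cycles. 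In every case each cycle of $\sigma$ of odd length $\geqslant 3$ has a partner of the same length, so $\sigma$ has each odd length $\geqslant3$ with even multiplicity and in particular $\sigma\neq\partition{3,5}$.

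The remaining, and main, difficulty is the two cases in which $\alpha=\theta_1\theta_2$ has type $\partition{4,4}$ or $\partition{3,3,1,1}$: now $\alpha$ is not an involution and the dihedral argument above no longer applies. Here I would use that $\theta_1\alpha\theta_1=\alpha^{-1}$, so that $\langle\theta_1,\theta_2\rangle$ is dihedral --- acting regularly on the $8$ points when $\alpha$ has type $\partition{4,4}$, and having a single orbit of size $6$ together with the two fixed points of $\alpha$ (which form the transposition shared by $\theta_1$ and $\theta_2$) when $\alpha$ has type $\partition{3,3,1,1}$. After normalizing $\alpha$ by conjugation, the condition that $\sigma=\alpha\theta_3$ be fixed-point-free --- that is, that $\theta_3$ pair no $x$ with $\alpha^{-1}(x)$ --- restricts $\theta_3$ to a short list, shorter still after quotienting by the centralizer of $\alpha$, and a direct computation then gives $\sigma$ of type $\partition{4,4}$ in the first case and of type $\partition{4,4}$ or $\partition{6,2}$ in the second, never $\partition{3,5}$. (For example, in the $\partition{4,4}$ case, if all four transpositions of $\theta_3$ join the two length-$4$ cycles of $\alpha$, then $\sigma$ interchanges those two cycles at every step and has only even cycles; the remaining configurations of $\theta_3$ are few and dealt with by hand.) Putting the cases together proves the combinatorial statement, so by Proposition~\ref{monodromy:prop} no monodromy tuple realizes~(12), and the datum is exceptional; alternatively, the non-existence of an admissible tuple is a quick finite verification of the kind referred to in the Appendix.
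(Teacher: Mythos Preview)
Your argument is correct and complete in its essentials, and it takes a route entirely different from the paper's. The paper proves exceptionality of~(12) by a topological method: it looks at the preimage of a segment joining three of the four branching points, obtaining a $4$-valent graph in $2\cdot T$ with two complementary discs of prescribed sizes, and then rules this out by a case analysis on maximal trees and their planar neighbourhoods. Your proof is instead purely algebraic, via Proposition~\ref{monodromy:prop}: you show the stronger fact that no product of three fixed-point-free involutions in $\permu_8$ can have cycle type $\partition{3,5}$. The ``doubling'' description of $\pi(\theta_1\theta_2)$ and the dihedral orbit analysis when $\alpha=\theta_1\theta_2$ is itself an involution are both clean and fully valid; they dispose of three of the five possible types for $\alpha$ with no computation at all. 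For the remaining two types $\partition{4,4}$ and $\partition{3,3,1,1}$ you reduce to a short finite check, which is legitimate and does give the conclusion.

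One small correction: your asserted list of cycle types for $\sigma=\alpha\theta_3$ in those last two cases is not exhaustive. For $\alpha$ of type $\partition{4,4}$, besides $\partition{4,4}$ one also gets $\partition{6,2}$ and $\partition{2,2,2,2}$ (e.g.\ $\alpha=(1234)(5678)$, $\theta_3=(15)(27)(36)(48)$ gives $\partition{6,2}$); and for $\alpha$ of type $\partition{3,3,1,1}$ with $\{7,8\}$ a pair of $\theta_3$, one also gets $\partition{2,2,2,2}$ (take $\alpha=(123)(456)$, $\theta_3=(14)(26)(35)(78)$). None of these is $\partition{3,5}$, so your conclusion stands; but the sentence ``gives $\sigma$ of type $\partition{4,4}$ in the first case and of type $\partition{4,4}$ or $\partition{6,2}$ in the second'' should be softened. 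Compared with the paper's graphical argument, your approach is more portable (it uses only the monodromy translation already available in Section~\ref{monodromy:sec}) and yields a slightly stronger statement by dropping transitivity; the paper's method, on the other hand, illustrates a generalization of dessins d'enfant to $n=4$ that may be of independent interest.
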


\begin{proof}
Again we could just apply the method of Zheng~\cite{Zh06}, but we sketch an alternative argument that exploits a graphic technique not used elsewhere in the present paper or
in~\cite{Pako}. In fact, it extends  dessins d'enfant but in a different way than
the constellations used in~\cite{Pako}.

\medskip

Suppose a branched covering $f$ realizing item~(12) exists, and let $e$ be a segment with ends at the first two branching points,
  midpoint at the third one and avoiding the fourth one. Then $f^{-1}(e)$ is a graph $\Gamma$ in $2\cdot T$ with four $4$-valent vertices and two complementary discs incident
  to respectively $6$ and $10$ vertices. The fact that no such $\Gamma$ exists is shown as follows:
\begin{itemize}
\item Note that abstractly such a $\Gamma$ always has as a maximal tree $\Lambda$ as in Fig.~\ref{44tree:fig}
\begin{figure}
\begin{center}
\begin{picture}(150,15)
\put(0,15){\line(1,0){150}}
\put(-4,2){$A$}
\put(46,2){$B$}
\put(96,2){$C$}
\put(146,2){$D$}
\put(0,15){\circle*{3}}
\put(50,15){\circle*{3}}
\put(100,15){\circle*{3}}
\put(150,15){\circle*{3}}
\end{picture}
\end{center}
\mycap{A tree $\Lambda$\label{44tree:fig}}
\end{figure}
(if it has none then there is one that is a spider with a head $A$ and three legs with ends $B,C,D$, but $B$ cannot be joined to $C$ or $D$,
hence $C$ is joined to $D$, a contradiction);
\item List all the ways the other 5 edges of $\Gamma$ can be abstractly attached to $\Lambda$ and eliminate duplicates up to symmetry,
    concluding that there are $10$ possibilities for the abstract $\Gamma$ (we do not show them explicitly);
\item Note that if $\Gamma$ is embedded in a surface $\Sigma$ then a neighbourhood $U$ of
$\Lambda$ in $\Sigma$ is contained in a plane and it is one of the four shown in Fig.~\ref{44tree-neigh:fig};
\begin{figure}
\begin{center}
\begin{picture}(150,30)
\put(-15,15){\line(1,0){180}}
\put(0,0){\line(0,1){30}}
\put(50,0){\line(0,1){30}}
\put(100,0){\line(0,1){30}}
\put(150,0){\line(0,1){30}}
\put(-12,2){$A$}
\put(38,2){$B$}
\put(88,2){$C$}
\put(138,2){$D$}
\put(0,15){\circle*{3}}
\put(50,15){\circle*{3}}
\put(100,15){\circle*{3}}
\put(150,15){\circle*{3}}
\end{picture}

\vspace{.5cm}

\begin{picture}(150,30)
\put(-15,15){\line(1,0){180}}
\put(0,0){\line(0,1){30}}
\put(50,0){\line(0,1){30}}
\put(100,15){\line(-1,1){15}}
\put(100,15){\line(1,1){15}}
\put(150,0){\line(0,1){30}}
\put(-12,2){$A$}
\put(38,2){$B$}
\put(96,2){$C$}
\put(138,2){$D$}
\put(0,15){\circle*{3}}
\put(50,15){\circle*{3}}
\put(100,15){\circle*{3}}
\put(150,15){\circle*{3}}
\end{picture}

\vspace{.5cm}

\begin{picture}(150,30)
\put(-15,15){\line(1,0){180}}
\put(0,0){\line(0,1){30}}
\put(50,15){\line(-1,1){15}}
\put(50,15){\line(1,1){15}}
\put(100,15){\line(-1,1){15}}
\put(100,15){\line(1,1){15}}
\put(-12,2){$A$}
\put(46,2){$B$}
\put(96,2){$C$}
\put(138,2){$D$}
\put(0,15){\circle*{3}}
\put(50,15){\circle*{3}}
\put(100,15){\circle*{3}}
\put(150,15){\circle*{3}}
\end{picture}

\vspace{.5cm}

\begin{picture}(150,30)
\put(-15,15){\line(1,0){180}}
\put(0,0){\line(0,1){30}}
\put(50,15){\line(-1,-1){15}}
\put(50,15){\line(1,-1){15}}
\put(100,15){\line(-1,1){15}}
\put(100,15){\line(1,1){15}}
\put(150,0){\line(0,1){30}}
\put(-12,2){$A$}
\put(46,19){$B$}
\put(96,2){$C$}
\put(138,2){$D$}
\put(0,15){\circle*{3}}
\put(50,15){\circle*{3}}
\put(100,15){\circle*{3}}
\put(150,15){\circle*{3}}
\end{picture}
\end{center}
\mycap{Possibilities for the planar neighbourhood $U$ of $\Lambda$ in $\Sigma$\label{44tree-neigh:fig}}
\end{figure}
\item Fix one of the $10$ abstract $\Gamma$'s and one of the $U$'s of Fig.~\ref{44tree-neigh:fig};
now the compatible $\Gamma$'s in $\Sigma$ are those obtained by:
\begin{itemize}
\item Joining in pairs, by arcs in the plane that may cross each other, the $10$ germs of edges of $\Lambda$, so to get $\Gamma$;
\item Taking the circles bounding a neighbourhood of $\Gamma$ in the plane (ignoring the crossings);
\item Attaching a disc to each such circle;
\item Computing to how many vertices these discs are incident;
\end{itemize}
\item This description suggests that several cases have to be considered for each of the $10\times 4$ possibilities. But
our aim is just to show that we never find two attaching discs incident to respectively $6$ and $10$ vertices, so we can
discard a partially constructed $\Gamma$ as soon as we see it creates an attaching disc incident to some number of vertices different from $6$ and $10$.
The resulting analysis is then rather simple and quick, leading to the desired conclusion.
\end{itemize}
\end{proof}

Finally, item~(13) was shown to be exceptional
in~\cite{EKS}, exploiting the easy fact that the identity and the elements $\sigma$ of $\permu_4$ with $\pi(\sigma)=\partition{2,2}$ form a subgroup of $\permu_4$.

% `
% ~

\section{Realizability for three branching points}\label{threepoints:sec}
In this section we prove Theorem~\ref{main:thm} under the restriction that the number $n$ of branching points
of the candidate branch datum is $3$. As announced, we proceed by induction on the genus $g$ of
the candidate covering surface, with the base step $g=0$ being a consequence of~\cite{Pako}:

\begin{thm}\label{sphere:thm}
The only exceptional candidate branch data of the form
$$\calD=(S,S,d,3;\pi_1,\pi_2,\partition{s,d-s})$$
are items~(1) to~(6) in Theorem~\ref{main:thm}.
\end{thm}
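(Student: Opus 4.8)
The plan is to obtain Theorem~\ref{sphere:thm} as an immediate consequence of Pakovich's solution~\cite{Pako} of the Hurwitz existence problem on the sphere in the presence of a length-$2$ partition, combined with the exceptionality arguments already given in Section~\ref{statement:sec}. One implication is at hand: each candidate branch datum in items~(1) to~(6) has been shown there to be exceptional. So the work lies in the converse, namely that every exceptional datum of the form $\calD=(S,S,d,3;\pi_1,\pi_2,\partition{s,d-s})$ occurs in that list.

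For the converse I would invoke Pakovich's classification, which asserts that the exceptional candidate branch data with both the covering and the covered surface equal to $S$ and with a partition of length~$2$ form exactly seven explicitly described families. Since a branch datum is unordered in its partitions, displaying the length-$2$ partition as $\pi_3$ is no restriction, and the standing convention that all $\pi_j$ be non-trivial merely discards the vacuous case $d=2$; hence an exceptional $\calD$ as above must be a member of one of Pakovich's seven families. It then remains to check that these seven families, rewritten in our notation, are precisely items~(1) to~(6). Part of this comparison has already been carried out in the Remark following the statement of Theorem~\ref{main:thm}, where Pakovich's families~(4) and~(5) are shown to coincide after reparametrisation and to become our item~(5); with that identification his six distinct families reparametrise to our six items, the unique sporadic one among them (in degree~$12$) matching our item~(1). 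Reading the comparison in the opposite direction shows that every genus-$0$ instance of items~(1) to~(6) is indeed one of Pakovich's data, so the two lists agree and the statement follows.

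The argument needs no new geometric or combinatorial input beyond~\cite{Pako} and Section~\ref{statement:sec}; the only point demanding care is the bookkeeping in the final comparison. One must check that Pakovich's normalisation of the parameters, and his choice of which partition plays the role of $\partition{s,d-s}$, are transcribed faithfully into ours, that the reduction from his seven entries to six (via the coincidence of his~(4) and~(5)) drops nothing, and that the overlaps among our own items --- for instance item~(2) with $k=2$ versus item~(6) with $k=h=2$, noted in a Remark above --- introduce no inconsistency. Once these routine identifications are in place, the theorem is proved.
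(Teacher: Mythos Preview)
Your proposal is correct and coincides with the paper's own treatment: the paper states Theorem~\ref{sphere:thm} without proof, declaring it ``a consequence of~\cite{Pako}'' and serving as the base step $g=0$ of the induction. The translation between Pakovich's seven families and items~(1)--(6) that you outline is exactly the bookkeeping the paper records in the Remark following Theorem~\ref{main:thm}, so nothing more is needed.
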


The inductive step will use the reduction moves $T_1,\ldots,T_4$ of Section~\ref{dessins:sec}.

\paragraph{Sparse realizability results}
For some specific candidate branch data we will not be able to apply any reduction move, so we treat them here.
We begin by stating a fact which is easily deduced from~\cite[Theorem~1.2]{PePe08},
and then we employ dessins d'enfant to establish another fact.

\begin{prop}\label{xy22:prop}
For $x\geqslant 3$ and $y\geqslant 2$ any candidate branch datum as follows is realizable:
$$\calD=(g\cdot T,S,d,3;\partition{x,y}*\rho_1,\partition{2,2}*\rho_2,\partition{2,d-2}).$$
\end{prop}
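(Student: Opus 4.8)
The plan is to deduce the statement from the classification of exceptional candidate branch data with $n=3$ and last partition $\partition{2,d-2}$ obtained in~\cite[Theorem~1.2]{PePe08}, simply by checking that a datum $\calD$ of the shape in the statement cannot occur there.

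First I would extract from~\cite[Theorem~1.2]{PePe08} the complete list of exceptional candidate branch data of the form $(g\cdot T,S,d,3;\pi_1,\pi_2,\partition{2,d-2})$. Consistently with Theorem~\ref{main:thm}, these are exactly: the data with $\pi_1=\pi_2=\partition{2,\ldots,2}$ (item~(2) with $s=2$); the data in degree $2k$ with $\{\pi_1,\pi_2\}=\{\partition{2,\ldots,2},\,\partition{1,\ldots,1,k+1}\}$ (item~(6) with $p=1$ and $h=2$, which for $k=2$ is the degree-$4$ datum also appearing as items~(3) and~(13)); and the single datum $(T,S,6,3;\partition{3,3},\partition{3,3},\partition{2,4})$ (item~(7)).

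Next I would run through these three cases and check in each that the two partitions other than $\partition{2,d-2}$ cannot be matched, in either order, to the slots $\partition{x,y}*\rho_1$ (with $x\geqslant3$, $y\geqslant2$) and $\partition{2,2}*\rho_2$. This rests on two incompatible demands: the slot $\partition{x,y}*\rho_1$ forces one of the two partitions to contain an entry $\geqslant3$ and a distinct entry $\geqslant2$, whereas the slot $\partition{2,2}*\rho_2$ forces the other to contain at least two entries equal to $2$. In the first case both partitions equal $\partition{2,\ldots,2}$, which has no entry $\geqslant3$; in the second case one partition is $\partition{2,\ldots,2}$ (no entry $\geqslant3$) and the other is $\partition{1,\ldots,1,k+1}$, which has only one entry $\geqslant2$; in the third case both partitions equal $\partition{3,3}$, which has no entry equal to $2$. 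Thus $\calD$ is not on the list, hence realizable.

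The main obstacle is the bookkeeping itself: since the partitions of a candidate branch datum are unordered, for every exceptional family one must rule out \emph{both} assignments of its two ``free'' partitions to the two slots. The clash between the requirement ``has an entry $\geqslant3$'' and the requirement ``has two entries equal to $2$'' makes each such exclusion a one-line verification. (If one preferred not to quote~\cite{PePe08}, one could instead construct directly a dessin d'enfant in $g\cdot T$ with black valences $\partition{x,y}*\rho_1$, white valences $\partition{2,2}*\rho_2$ and region lengths $\partition{2,d-2}$, which is easy given the slack provided by $x\geqslant3$ and $y\geqslant2$; but the deduction from~\cite{PePe08} is much shorter.)
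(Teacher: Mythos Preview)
Your proof is correct and follows exactly the approach the paper indicates: the paper states only that the result ``is easily deduced from~\cite[Theorem~1.2]{PePe08}'' without further detail, and you have carried out that deduction explicitly. Your enumeration of the $s=2$, $n=3$ exceptions and the case-check that none of them matches the pattern $\partition{x,y}*\rho_1,\ \partition{2,2}*\rho_2$ (using the incompatibility between ``has an entry $\geqslant3$ and a second entry $\geqslant2$'' and ``has two entries equal to $2$'') is accurate; note also that $d\geqslant x+y\geqslant5$ automatically disposes of the degree-$4$ overlaps you mention.
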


\begin{prop}\label{dessins:special-families:prop}
For $g\geqslant 2$ the following candidate branch data are realizable:
\begin{enumerate}
\item $(g\cdot T,S,6g,3;\partition{3,\ldots,3},\partition{3,\ldots,3},\partition{s,6g-s})$;
\item $(g\cdot T,S,6g+2,3;\partition{2,3,\ldots,3},\partition{2,3\ldots,3},\partition{s,6g+2-s})$;
\item $(g\cdot T,S,6g+3,3;\partition{3,\ldots,3},\partition{1,2,3,\ldots,3},\partition{s,6g+3-s})$;
\item $(g\cdot T,S,6g+4,3;\partition{1,3,\ldots,3},\partition{1,3,\ldots,3},\partition{s,6g+4-s})$;
\item $(g\cdot T,S,6g+6,3;\partition{1,2,3,\ldots,3},\partition{1,2,3,\ldots,3},\partition{s,6g+6-s})$.
\end{enumerate}
\end{prop}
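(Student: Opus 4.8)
The plan is to translate the statement into a construction problem for dessins d'enfant and then build the dessins explicitly. By Proposition~\ref{dessins:prop}, since each of the five candidate branch data has $n=3$ and last partition $\partition{s,d-s}$ of length $2$, it is realizable if and only if $g\cdot T$ carries a dessin d'enfant whose black and white vertices have the valences prescribed by $\pi_1$ and $\pi_2$ and whose complementary regions are exactly two, of lengths $s$ and $d-s$. A bookkeeping remark frames the search: such a dessin has $E=V+2g$ edges by Euler's formula, and this is exactly what Riemann--Hurwitz already forces for each of the five families (there $\ell(\pi_1)+\ell(\pi_2)=d-2g$), so a two-region dessin is numerically possible for every admissible $s$, i.e.\ for every $s$ with $1\leqslant s\leqslant d-1$.

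I would carry out the construction by the handle-attachment technique underlying the proofs of Propositions~\ref{T1:move:prop}--\ref{T4:move:prop}, but with new gadgets, since --- as the text points out --- none of the moves $T_1,\dots,T_4$ literally applies here (our partitions contain no entry $\geqslant 4$, no $\partition{1,1,3}$ and no $\partition{2,2}$). Concretely, for family $1$ I would exhibit a dessin on $g\cdot T$ built from $g$ identical "blocks", each block a small trivalent bipartite dessin carried by a one-handle and contributing two black vertices, two white vertices and six edges, chained together so that the result is a connected dessin on a genus-$g$ surface with exactly two complementary discs; each handle is glued inside a single region, which keeps the region count equal to two, and a local choice inside one block (essentially a binary choice of how its sub-dessin meets the boundary of the ambient region, together with the position of the attaching feet) transfers a controllable amount of length between the two final regions. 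This is how the parameter $s$ is steered. The verification of the vertex valences and of the fact that there remain exactly two disc regions is then a routine inspection of the local pictures.

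The substantive point is realizing \emph{every} admissible $s$, which I would package as an induction on $g$ with base case $g=2$; the hypothesis $g\geqslant 2$ is genuinely needed, since the genus-one specializations of these families include bona fide exceptional data, e.g.\ item~(7) of Theorem~\ref{main:thm}. At $g=2$ one produces by hand the finitely many dessins required, one realizing each admissible region pair; for the smallest values of $s$ these must be multigraphs, since a region of length $1$ is a bigon bounded by a double edge. For the inductive step one checks that attaching one more block enlarges the set of achievable $s$ by a full interval of new values, so that, together with a bounded number of dedicated "extremal" dessins (forming their own $g$-parametrized family for $s$ close to $0$ or $d$, and a sporadic construction for $s=d/2$ when it is not already covered), all of $1\leqslant s\leqslant d-1$ is reached. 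Finally I would reduce families $2$--$5$ to family $1$: each differs from a family-$1$ datum only by a bounded number of extra small vertices, and at the level of dessins this is realized by a local surgery --- subdividing a boundary edge of a region by an extra black--white pair, or grafting a short pendant configuration onto a trivalent vertex along a region --- which changes $d$ by $2$, $3$, $4$ or $6$, preserves the two-region structure, and leaves the rest of the dessin untouched; so it suffices to run the argument for family $1$ and then perform these surgeries.

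The main obstacle I expect is precisely this uniform coverage of all $s$: the generic chain-of-blocks construction disposes of mid-range $s$ with no trouble, but the extreme values $s=1,2,3$ and their mirrors, as well as the balanced value $s=d/2$, typically require ad hoc small dessins, and for each such case one must separately check that the datum is not on the exceptional list of Theorem~\ref{main:thm}, so that one is not attempting the impossible. Keeping the region-length bookkeeping exact through every handle attachment --- in particular guaranteeing that a handle never merges the two regions nor splits one of them, and that all complementary regions stay discs --- is the delicate part of the write-up, and it is the reason the construction is most cleanly organized as an induction on $g$ rather than as a single monolithic picture.
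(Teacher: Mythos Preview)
Your overall architecture matches the paper's proof almost exactly: induction on $g$ with an explicit base at $g=2$, a handle-attachment step that adds two black and two white trivalent vertices (six edges) per handle, and then local surgeries to pass from family~1 to families~2--5. So the strategy is right.

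What you are missing is the single organizing device that makes all of this run without the special-casing you worry about at the end. The paper introduces an \emph{augmented datum}: one underlines an entry $x$ of $\pi_3$ to record that the corresponding region $R$ contains an edge of $\Gamma$ with $R$ on \emph{both} sides. This is exactly the hypothesis needed for every local operation you describe: the handle-attachment for the inductive step (the paper's Fact~2, turning $\partition{\underline{x},d-x}$ into $\partition{\underline{x+6},d-x}$), and the three surgeries $\partition{2}*\partition{2}$, $\partition{3}*\partition{1,2}$, $\partition{1,3}*\partition{1,3}$ that produce families~2--5 (Facts~4--6), all take place along such a doubly-incident edge, and each of them \emph{preserves} the underlining. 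Once you carry this condition through the induction, the base case $g=2$ (six explicit dessins on $2\cdot T$, with the appropriate underlinings recorded) propagates to all $g$ and all admissible $s$ with no ad hoc treatment of $s\in\{1,2\}$ or $s=d/2$: the shift $x\mapsto x+6$ applied to either side of the six base partitions already covers every $s$ at the next genus. Your anticipated difficulty with extreme and balanced values of $s$ is therefore an artifact of not having isolated this invariant; once you have it, the write-up is short and uniform.
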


\begin{proof}
Within this proof we define an \emph{augmented datum} as a symbol $\calD=(g\cdot T,S,d,3;\pi_1,\pi_2,\pi_3)$
similar to a candidate branch datum, except that some entries of $\pi_3$ are underlined.
We say that $\calD$ is \emph{realizable} if there exists a dessin d'enfant $\Gamma$ realizing $\calD$ (forgetting the underlining) such that,
for every complementary region $R$ of $\Gamma$ corresponding to an underlined entry of $\pi_3$, there is an edge of $\Gamma$ with $R$ on both sides.

\medskip

\noindent\textsc{Fact 1.}
\emph{For $\pi_3\in\{\partition{1,\underline{11}},\partition{2,\underline{10}},\partition{\underline{3},\underline{9}},\partition{\underline{4},\underline{8}},\partition{\underline{5},\underline{7}},\partition{\underline{6},\underline{6}}\}$
the augmented datum
$\calD=(2\cdot T,S,12,3;\partition{3,3,3,3},\partition{3,3,3,3},\pi_3)$ is realizable.}
This is established by exhibiting the desired dessins d'enfant in Fig.~\ref{degree12:augmented:fig}.
\begin{figure}
\faifig{Quella a pagina 43 in alto della tesi ma togliendo tutti i colori}
{dessin-special-families-small}
{Fact 1 at the level of dessins d'enfant.
These pictures only show an embedding in $\matR^3$ of a regular neighbourhood of a dessin $\Gamma$ in $2\cdot T$
\label{degree12:augmented:fig}}
\end{figure}

\medskip

\noindent\textsc{Fact 2.}
\emph{If $\calD=(g\cdot T,d,\pi_1,\pi_2,\partition{\underline{x},d-x})$ is realizable then
$$\calD'=((g+1)\cdot T,d+6,\partition{3,3}*\pi_1,\partition{3,3}*\pi_2,\partition{\underline{x+6},d-x})$$ also is.}
To see this, take a dessin d'enfant $\Gamma$ realizing $\calD$, with light grey complementary region $R$ corresponding to $\underline{x}$,
and fix an edge $e$ of $\Gamma$ with $R$ on both sides, as in part 0 of
Fig.~\ref{augmented:6jump:fig}. We then operate as follows on $\Gamma$ to get a $\Gamma'$ realizing $\calD'$:
\begin{figure}
\faifig{Quella a pagina 43 in basso e 44 in alto, se possibile\\ alzando il cocuore del manico sopra il disco}{dessin-special-families-cmove}{Fact 2 at the level of dessins d'enfant\label{augmented:6jump:fig}}
\end{figure}
\begin{enumerate}
\item We add a black and a white vertex on $e$ (part 1);
\item We attach to $g\cdot T$ a $1$-handle with attaching discs inside $R$ (part 2);
\item We add one black vertex, one white vertex and four edges, as in part 3.
\end{enumerate}

\medskip

\noindent\textsc{Fact 3.}
\emph{For $g\geqslant 2$ and $\pi_3\in\{\partition{1,\underline{6g-1}},\partition{2,\underline{6g-2}}\}\cup\{\partition{\underline{s},\underline{6g-s}}\colon 3\leqslant s\leqslant 6g-3\}$ the augmented datum
$(g\cdot T,S,6g,3;\partition{3,\ldots,3},\partition{3,\ldots,3},\pi_3)$
is realizable}.
This is easily shown by induction on $g$, using Fact 1 for the base and Fact 2 for the induction.

\medskip

\noindent\textsc{Fact 4.}
\emph{If
$\calD=(g\cdot T,S,d,3;\pi_1,\pi_2,\partition{\underline{x},d-x})$ is realizable then
$$\calD'=(g\cdot T,S,d+2,3;\partition{2}*\pi_1,\partition{2}*\pi_2,\partition{\underline{x+2},d-x})$$ also is.}
To see this it is enough to take a dessin d'enfant $\Gamma$ realizing $\calD$ and to add one black vertex and one white vertex on
an edge $e$ having on both sides the complementary region of $\Gamma$ corresponding to $\underline{x}$, as in Fig.~\ref{augmented:2jump:fig}.
\begin{figure}
\faifig{Quella a pagina 44 in mezzo}
{dessin-special-families-step-4}
{Fact 4 at the level of dessins d'enfant\label{augmented:2jump:fig}}
\end{figure}

\medskip

\noindent\textsc{Fact 5.}
\emph{If
$\calD=(g\cdot T,S,d,3;\pi_1,\pi_2,\partition{\underline{x},d-x})$ is realizable then
$$\calD'=(g\cdot T,S,d+3,3;\partition{3}*\pi_1,\partition{1,2}*\pi_2,\partition{\underline{x+3},d-x})$$ also is.}
In the usual framework, proceed as in Fig.~\ref{augmented:3jump:fig}.
\begin{figure}
\faifig{Quella a pagina 44 in basso}
{dessin-special-families-step-5}
{Fact 5 at the level of dessins d'enfant\label{augmented:3jump:fig}}
\end{figure}

\medskip

\noindent\textsc{Fact 6.}
\emph{If
$\calD=(g\cdot T,S,d,3;\pi_1,\pi_2,\partition{\underline{x},d-x})$ is realizable then
$$\calD'=(g\cdot T,S,d+4,3;\partition{1,3}*\pi_1,\partition{1,3}*\pi_2,\partition{\underline{x+4},d-x})$$ also is.}
See Fig.~\ref{augmented:4jump:fig}.
\begin{figure}
\faifig{Quella a pagina 45 in basso}
{dessin-special-families-step-6}
{Fact 6 at the level of dessins d'enfant\label{augmented:4jump:fig}}
\end{figure}

\medskip

\noindent\textsc{Conclusion.}
It is now easy to see that each of the five candidate branch data of the statement can be obtained starting from the realizable augmented datum of Fact 3
by applying Facts 4, 5 and 6 zero or more times, and then forgetting about the augmentation. Namely:
\begin{enumerate}
\item No need to apply steps 4, 5 or 6;
\item Apply Fact 4;
\item Apply Fact 5;
\item Apply Fact 6;
\item Apply Facts 4 and 6.\qedhere
\end{enumerate}
\end{proof}

\paragraph{Genus-1 covering surface}
We now prove the following:

\begin{thm}\label{torus:thm}
The only exceptional candidate branch data of the form
$$\calD=(T,S,d,3;\pi_1,\pi_2,\partition{s,d-s})$$
are items~(7) to~(11) in Theorem~\ref{main:thm}.
\end{thm}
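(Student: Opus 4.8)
The plan is to deduce Theorem~\ref{torus:thm} from its genus-$0$ counterpart Theorem~\ref{sphere:thm} by a single step of genus reduction. Since the exceptionality of items~(7)--(11) has already been recorded in Section~\ref{statement:sec}, it suffices to prove that every candidate branch datum
$$\calD=(T,S,d,3;\pi_1,\pi_2,\partition{s,d-s}),\qquad 1\leqslant s\leqslant d/2,$$
not on that list is realizable. First I would dispose of the easy cases: if $s=1$ the datum is realizable by~\cite{EKS}, and if $\ell(\pi_1)=1$ or $\ell(\pi_2)=1$ it is realizable by Theorem~\ref{length-1:thm}; so we may assume $2\leqslant s\leqslant d/2$ and $\ell(\pi_1),\ell(\pi_2)\geqslant 2$. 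The Riemann--Hurwitz relation~(\ref{v-version:RH}) then reads $v(\pi_1)+v(\pi_2)=d+2$, which together with $v(\pi_i)\leqslant d-2$ forces $d\geqslant 6$; we also assume $v(\pi_1)\geqslant v(\pi_2)$.

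The core of the argument is a case analysis on the sizes of the entries of $\pi_1$ and $\pi_2$, arranged so that one of the topological moves $T_1,\ldots,T_4$ of Section~\ref{dessins:sec} applies and carries $\calD$ to a genus-$0$ datum $\calD'$: whenever $\calD'$ can be arranged to lie outside the exceptional families~(1)--(6), Theorem~\ref{sphere:thm} gives the realizability of $\calD'$, hence of $\calD$. Concretely, if $\pi_1$ has an entry $\geqslant 4$ and $\pi_2$ an entry $\geqslant 3$ one uses $T_4$ (or $T_2$ when $\pi_2$ also contains a $2$, or $T_3$ when $\pi_1$ has two entries $\geqslant 3$ and $\pi_2$ two $2$'s); if one of the two partitions contains a block $\partition{1,1,3}$ one uses $T_1$. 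The point I would press here is that each move carries free parameters---which entries to shrink, and, for $T_2$, how to split the shrunk entry as $x_1+x_2$---and that varying them lets us steer $\calD'$ away from the list~(1)--(6) in all but a controlled residue of cases; those for which the steering succeeds are finished immediately.

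The residue consists of an explicit finite collection of recalcitrant data of two kinds: those to which no move applies, and those all of whose admissible moves land on an exceptional genus-$0$ family. In both kinds the partitions $\pi_1,\pi_2$ are severely constrained---when no move applies, one checks that $\pi_1$ and $\pi_2$ must have all entries $\leqslant 3$---and the data turn out to be genus-$1$ analogues of the families realized on higher-genus surfaces in Proposition~\ref{dessins:special-families:prop}; Proposition~\ref{xy22:prop} and Zheng's classification~\cite{Zh06} of the data with $n=3$ and $d\leqslant 20$ also dispose of many of them. For the remaining members I would produce realizing dessins d'enfant on the torus directly, by the collapse-of-segments construction of Section~\ref{dessins:sec}; items~(7)--(11) are exactly the members of this residue for which no such dessin exists, i.e.\ the exceptions.

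I expect the main obstacle to be the completeness statement---showing that the exception list is exactly (7)--(11), i.e.\ that no further recalcitrant datum is realizable but escapes an explicit construction. This is a careful but essentially unglamorous bookkeeping: keeping the case analysis on entry sizes exhaustive while minimizing it, tracking the several parameter choices available in each of $T_1,\ldots,T_4$, and accounting for the overlaps among the families of Theorem~\ref{main:thm}. That is where the real effort of the proof lies.
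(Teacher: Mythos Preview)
Your proposal is correct and follows essentially the same route as the paper: reduce the torus datum to a sphere datum via one of the topological moves $T_1,\ldots,T_4$, steer the reduced datum away from the exceptional families~(1)--(6), and mop up the residue with Proposition~\ref{xy22:prop} and Zheng's tables.

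The one organizational difference worth flagging is that the paper invokes Zheng's classification \emph{up front}, disposing of all $d\leqslant 16$ immediately and assuming $d\geqslant 17$ before the case analysis begins. This has two effects. First, when the initial move lands on an exceptional genus-$0$ datum (items~(3)--(6) with suitable parameters), the paper never needs a direct construction: a \emph{second} application of a move (often $T_2$ with a different split $x_1+x_2$, or $T_4$ at different entries) always produces a realizable genus-$0$ datum. Second, the ``no-move'' residue---where both partitions have all entries $\leqslant 3$ and neither $T_1$ nor $T_3$ applies---is shown by a short counting argument to force $d\leqslant 12$, hence lies entirely within Zheng's range. Consequently the paper performs no explicit dessin construction on the torus at all; your planned fallback of building torus dessins by hand (the genus-$1$ analogue of Proposition~\ref{dessins:special-families:prop}) is unnecessary, though it would of course also work. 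In short, your strategy is the right one, but the paper's ordering---Zheng first, then moves, with a second move as the rescue---spares you the direct constructions.
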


\begin{proof}
Setting $\ell_j=\ell(\pi_j)$ we see that the Riemann-Hurwitz condition~(\ref{general:RH}) reads $\ell_1+\ell_2=d-2$.
First note that~\cite[Proposition~5.3]{EKS} implies that no datum of the form
$$(T,S,d,n;\pi_1,\ldots,\pi_{n-1},\partition{1,d-1})$$
is exceptional, so we can assume $1<s<d-1$. Moreover, according to Table 3 in~\cite{Zh06}, the
only relevant exceptions with $d\leqslant16$ are items~(7) to~(10) in Theorem~\ref{main:thm} and item~(11) for $5\leqslant k\leqslant8$, so we can assume $d\geqslant17$
and we are left to show that only item~(11) is exceptional.

The rest of the proof is split in the analysis of various cases.

\medskip

\noindent
\textsc{Case 1:}
$\pi_1=\partition{\matebold{x}}*\rho_1,\ \pi_2=\partition{\mathbf{2}}*\rho_2$ with $x\geqslant4$ and $\rho_2\neq \partition{2,\ldots,2}$.
We apply to $\calD$ the reduction move $T_2$ with $x_1=1$ and $x_2=x-3$ getting
$$\calD'=(S,S,d-2,3;\partition{1,x-3}*\rho_1,\rho_2,\partition{s-1,d-s-1})$$
(here and in the sequel when describing a case we already highlight the entries of $\pi_1$ and $\pi_2$ at which we will later apply a move).
Since neither $\partition{1,x-3}*\rho_1$ nor $\rho_2$ can be $\partition{2,\ldots,2}$, we see that $\calD'$ is realizable by Theorem~\ref{sphere:thm} unless it is item~(6) with $d-2=kh,\ \rho_2=\partition{h,\ldots,h}$ (so $h\geqslant3$), $s-1=ph$ and either $\rho_1=\partition{1,\ldots,1}$ and $x-3=k+1$ or $x-3=1$ and $\rho_1=\partition{1,\ldots,1,k+1}$.
Correspondingly, we have that $\calD$ is one of the following:
$$\begin{array}{c}
(T,S,kh+2,3;\partition{1,\ldots,1,\bm{k+4}},\partition{\mathbf{2},h,\ldots,h},\partition{ph+1,(k-p)h+1})\\
(T,S,kh+2,3;\partition{1,\ldots,1,\mathbf{4},k+1},\partition{2,\matebold{h},h,\ldots,h},\partition{ph+1,(k-p)h+1}).
\end{array}$$
If we apply respectively the moves $T_2$ and $T_4$ at the highlighted entries of $\pi_1$ and $\pi_2$, we get
$$\begin{array}{c}
(S,S,kh,3;\partition{1,\ldots,1,2,k},\partition{h,\ldots,h},\partition{ph,(k-p)h})\\
(S,S,kh,3;\partition{1,\ldots,1,2,k+1},\partition{2,h-2,h,\ldots,h},\partition{ph,(k-p)h})
\end{array}$$
that are realizable by Theorem~\ref{sphere:thm}.

\medskip

\noindent
\textsc{Case 2:}
$\pi_1=\partition{\matebold{x}}*\rho_1, \pi_2=\partition{\matebold{y}}*\rho_2$ with $x,y\geqslant4$, $\rho_1\not\ni2,\ \rho_2\not\ni2$.
Then we apply $T_4$ getting
$$\calD'=(S,S,d-2,3;\partition{x-2}*\rho_1,\partition{y-2}*\rho_2,\partition{s-1,d-s-1})$$
which is realizable by Theorem~\ref{sphere:thm} unless it is item~(6) with (up to switch)
$d-2=kh,\ \rho_1=\partition{1,\ldots,1},\ x-2=k+1,\ y-2=h$, $\rho_2=\partition{h,\ldots,h}$, whence $h\geqslant3$, and $s-1=ph$, so $\calD$ is
$$(T,S,kh+2,3;\partition{1,\ldots,1,\bm{k+3}},\partition{h+2,\matebold{h},h,\ldots,h},\partition{ph+1,(k-p)h+1})$$
and applying $T_4$ we get the following datum which is realizable by Theorem~\ref{sphere:thm}:
$$(S,S,kh,3;\partition{1,\ldots,1,k+1},\partition{h+2,h-2,h,\ldots,h},\partition{ph,(k-p)h}).$$

\medskip

\noindent
\textsc{Case 3:}
$\pi_1=\partition{\matebold{x}}*\rho_1$ with $x\geqslant4$, $\pi_2=\partition{\mathbf{3}}*\rho_2$ with $\rho_2$ containing $1$'s and $3$'s only.
Then we apply $T_4$ getting
$$(S,S,d-2,3;\partition{x-2}*\rho_1,\partition{1}*\rho_2,\partition{s-1,d-s-1})$$
which by Theorem~\ref{sphere:thm} is realizable unless it is item~(6) with $d-2=kh$, $x-2=h$, $\rho_1=\partition{h,\ldots,h}$, $\rho_2=\partition{1,\ldots,1,k+1}$, whence $k=2$,
and $s-1=ph$. Then we have $p=1$ and $d=2h+2$, $h\geqslant8$ and
$$\calD=(T,S,2h+2,3;\partition{h+2,h},\partition{3,\mathbf{3},1,1,\ldots,1},\partition{h+1,h+1})$$
but applying $T_1$ we get the following datum which is realizable by Theorem~\ref{sphere:thm}:
$$(S,S,2h+2,3;\partition{h+2,h},\partition{3,1,\ldots,1},\partition{h+1,h+1}).$$

\medskip

\noindent
\textsc{Case 4:}
$\max(\pi_1)=3,\ \max(\pi_2)\leqslant3$, $\pi_2\neq\partition{2,\ldots,2}$.
We first note that if $\pi_1\supseteq\partition{1,1}$ we have $\pi_1=\partition{\mathbf{3},1,1}*\rho_1$ and we can apply $T_1$ getting
$$(S,S,d,3;\partition{1,1,1,1,1}*\rho_1,\pi_2,\partition{s,d-s})$$
which can be exceptional only if it is item~(6) with $k=2$ and $h\leqslant 3$, hence $d\leqslant6$, which we are excluding.
So we assume $\pi_1\not\supseteq\partition{1,1}$ and we face the following subcases:
\begin{itemize}
\item[(i)] $\pi_1=\partition{\mathbf{3},\mathbf{3}}*\rho_1,\ \pi_2=\partition{\mathbf{2},\mathbf{2}}*\rho_2$;
\item[(ii)] $\pi_2\not\supseteq\partition{2,2}$;
\item[(iii)] $\pi_1\not\supseteq\partition{3,3}$.
\end{itemize}
In subcase~(i) for $s=2$ Proposition~\ref{xy22:prop} implies that $\calD$ is realizable. Then we can assume $3\leqslant s\leqslant d-3$ and apply $T_3$, getting
$$(S,S,d-4,3;\partition{1,1}*\rho_1,\rho_2,\partition{s-2,d-s-2})$$
that can be exceptional only if it is item~(6) with $k+1\leqslant 3$, $h\leqslant 3$ and $kh=d-4$, whence $d\leqslant10$, which we are excluding.
In subcase~(ii) we first note that $\pi_2$ contains $3$'s, otherwise it is $\partition{2,1,\ldots,1}$ and $\ell_2=d-1$, but $\ell_1+\ell_2=d-2$.
If $\pi_2\supseteq\partition{1,1}$ we can switch the roles of $\pi_1$ and $\pi_2$ and use the first fact we noted to deduce that $\calD$ is realizable.
So we can assume $\pi_2=\partition{3,\ldots,3}*\rho_2$ with $\rho_2\subset \partition{1,2}$, and there are at least two $3$'s otherwise $d\leqslant 6$.
This implies that if $\pi_1\supseteq\partition{2,2}$ we are in case (i) with roles switched, and $\calD$ is realizable. Otherwise we also have
$\pi_1=\partition{3,\ldots,3}*\rho_1$ with $\rho_1\subset \partition{1,2}$, hence for $j=1,2$ we have
$\ell_j\leqslant (d-(1+2))/3+2=d/3+1$, but then
$$d-2=\ell_1+\ell_2\leqslant\frac23d+2\ \Rightarrow\ d\leqslant12.$$
Finally in subcase~(iii) we claim that $\pi_2\supseteq\partition{3,3}$, otherwise for $j=1,2$ we have
$\ell_j\geqslant (d-3)/2+1$ whence the contradiction
$$d-2=\ell_1+\ell_2\geqslant (d-3)+2=d-1.$$
Then, switching roles, we are in subcase~(i) or~(ii), so we conclude that $\calD$ is realizable.

\medskip

\noindent
\textsc{Case 5:}
$\pi_1=\partition{\matebold{x}}*\rho_1$ with $x=\max(\pi_1)\geqslant4$, $\pi_2=\partition{\mathbf{2},2,\ldots,2}$.
We apply move $T_2$ with $x_1=1$ and $x_2=x-3$, getting
$$(S,S,d-2,3;\partition{1,x-3}*\rho_1,\partition{2,\ldots,2},\partition{s-1,d-s-1}).$$
This can be one of the items (1)-(6) in many different ways, namely:
\begin{itemize}
\item[(3)] with $d-2=2k,\ s-1=k$ and
\begin{itemize}
\item[(a)] $x-3=2,\ \rho_1=\partition{2,\ldots,2,3}$
\item[(b)] $x-3=3,\ \rho_1=\partition{2,\ldots,2}$
\end{itemize}
\item[(4)] with $d-2=4k+2$, whence $k\geqslant4$, $s-1=2k+1$ and
\begin{itemize}
\item[(a)] $x-3=k+1,\ \rho_1=\partition{1,\ldots,1,k+2}$
\item[(b)] $x-3=k+2,\ \rho_1=\partition{1,\ldots,1,k+1}$
\end{itemize}
\item[(5)] with $d-2=4k$, whence $k\geqslant4$, $s-1=2k-1$ and $x-3=k+1,\ \rho_1=\partition{1,\ldots,1,k+1}$
\item[(6)] with $h=2$, $d-2=2k$, whence $k\geqslant8$, $s-1=2p$ and $x-3=k+1,\ \rho_1=\partition{1,\ldots,1}$.
\end{itemize}
Correspondingly we see that $\calD$ is
\begin{itemize}
\item[(3-a)] $(T,S,2k+2,3;\partition{2,\ldots,2,3,5},\partition{2,\ldots,2},\partition{k+1,k+1})$
\item[(3-b)] $(T,S,2k+2,3;\partition{2,\ldots,2,\mathbf{6}},\partition{2,\ldots,2},\partition{k+1,k+1})$
\item[(4-a)] $(T,S,4k+4,3;\partition{1,\ldots,1,k+2,\bm{k+4}},\partition{2,\ldots,2},\partition{2k+2,2k+2})$
\item[(4-b)] $(T,S,4k+4,3;\partition{1,\ldots,1,k+1,\bm{k+5}},\partition{2,\ldots,2},\partition{2k+2,2k+2})$
\item[(5)] $(T,S,4k+2,3;\partition{1,\ldots,1,k+1,\bm{k+4}},\partition{2,\ldots,2},\partition{2k,2k+2})$
\item[(6)] $(T,S,2k+2,3;\partition{1,\ldots,1,\bm{k+4}},\partition{2,\ldots,2},\partition{2p+1,2(k-p)+1})$.
\end{itemize}
Now (3-a) is precisely the exceptional item~(11) of the statement. In all the other cases we perform a reduction move $T_2$ at the highlighted entry of $\pi_1$,
always with $x_1=2$, getting a realizable candidate branch datum, namely one that cannot be one of items~(1) to~(6):
\begin{itemize}
\item[(3-b)] $(S,S,2k,3;\partition{2,\ldots,2},\partition{2,\ldots,2},\partition{k,k})$
\item[(4-a)] $(S,S,4k+2,3;\partition{1,\ldots,1,2,k,k+2},\partition{2,\ldots,2},\partition{2k+1,2k+1})$
\item[(4-b)] $(S,S,4k+2,3;\partition{1,\ldots,1,2,k+1,k+1},\partition{2,\ldots,2},\partition{2k+1,2k+1})$
\item[(5)] $(S,S,4k,3;\partition{1,\ldots,1,2,k,k+1},\partition{2,\ldots,2},\partition{2k-1,2k+1})$
\item[(6)] $(S,S,2k,3;\partition{1,\ldots,1,2,k},\partition{2,\ldots,2},\partition{2p,2(k-p)})$.
\end{itemize}

\medskip

\noindent
\textsc{Case 6:}
$\max(\pi_1)=3,\ \pi_2=\partition{2,\ldots,2}$.
If $d=2k$ we have $\ell_2=k$, whence $\ell_1=k-2$, which easily implies that $\pi_1=\partition{3,3,\mathbf{3},\mathbf{3}}*\rho_1$.
If $s=2$ or $s=d-2$ Proposition~\ref{xy22:prop} implies that $\calD$ is realizable. Otherwise we apply a move $T_3$ getting
$$(S,S,2k-4,3;\partition{3,3,1,1}*\rho_1,\partition{2,\ldots,2},\partition{s-2,d-s-2})$$
which is realizable unless it is item (5), but we are assuming $d>8$.

\medskip

To finish the proof we only must show that the above cases cover all possibilities up to switching $\pi_1$ and $\pi_2$.
In fact, since $\ell_1+\ell_2=d-2$, we cannot have $\max(\pi_1)\leqslant2$ and $\max(\pi_2)\leqslant2$, otherwise
$\ell_1+\ell_2\geqslant d/2+d/2=d$, which is absurd. So up to switching $\pi_1$ and $\pi_2$ we have $\max(\pi_1)\geqslant3$.
If $\max(\pi_1)\geqslant4$ we have the following mutually exclusive possibilities:
\begin{itemize}
\item[(i)] $\pi_2=\partition{2,\ldots,2}$;
\item[(ii)] $\pi_2\neq\partition{2,\ldots,2}$ and $\pi_2\ni 2$;
\item[(iii)] $\pi_2\not\ni2$ and $\max(\pi_2)=3$;
\item[(iv)] $\pi_2\not\ni2$ and $\max(\pi_2)\geqslant 4$;
\end{itemize}
If (i) holds we are in Case 5, if (ii) holds we are in Case 1, if (iii) holds we are in Case 3, while if (iv) holds we either have
$\pi_1\not\ni2$, and we are in Case 2, or $\pi_1\ni2$, but since $\max(\pi_2)\geqslant 4,\ \pi_1\neq\partition{2,\ldots,2}$ we are in Case 1
with roles switched. Having shown that our cases cover all possibilities with $\max(\pi_1)\geqslant4$, we can assume
$\max(\pi_1)=3$ and $\max(\pi_2)\leqslant 3$. Then either $\pi_2=\partition{2,\ldots,2}$, and we are in Case 6, or
$\pi_2\neq\partition{2,\ldots,2}$, and we are in Case 4.
\end{proof}

\paragraph{Large genus covering surface}
We now prove the following:

\begin{thm}\label{bigg:thm}
For $g\geqslant2$ there is no exceptional candidate branch datum of the form
$$\calD=(g\cdot T,S,d,3;\pi_1,\pi_2,\partition{s,d-s}).$$
\end{thm}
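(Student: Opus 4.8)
The plan is to argue by induction on $g$, in complete analogy with the proof of Theorem~\ref{torus:thm} and using the topological reduction moves $T_1,\ldots,T_4$ of Section~\ref{dessins:sec}. Writing $\ell_j=\ell(\pi_j)$, the Riemann-Hurwitz condition~(\ref{general:RH}) for $\calD=(g\cdot T,S,d,3;\pi_1,\pi_2,\partition{s,d-s})$ reads $\ell_1+\ell_2=d-2g$, equivalently $v(\pi_1)+v(\pi_2)=d+2g$ in the notation of Section~\ref{monodromy:sec}. The base of the induction is Theorem~\ref{torus:thm}: when $g=2$ every reduction move produces a genus-$1$ datum, whose realizability is governed by items~(7)--(11); when $g\geqslant 3$ a reduction move produces a genus-$(g-1)$ datum, realizable by the inductive hypothesis. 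First I would invoke the result of~\cite{EKS} cited in the introduction (the case $s=1$) to reduce to $2\leqslant s\leqslant d-2$, and observe that since $\ell_1+\ell_2=d-2g\leqslant d-4$ we cannot have both $\max(\pi_1)\leqslant 2$ and $\max(\pi_2)\leqslant 2$, so up to switching $\pi_1$ and $\pi_2$ we may assume $\max(\pi_1)\geqslant 3$.

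The heart of the argument is a case analysis on the shapes of $\pi_1,\pi_2$, parallel to Cases~1--6 in the proof of Theorem~\ref{torus:thm}: if $\pi_1\supseteq\partition{1,1,3}$ apply $T_1$; if $\max(\pi_1)\geqslant 4$ and $\pi_2$ contains a $2$ apply $T_2$ (with a suitable splitting $x_1+x_2=x-2$); if $\max(\pi_1)\geqslant 4$ and $\max(\pi_2)\geqslant 3$ apply $T_4$; if $\partition{3,3}\subseteq\pi_1$, $\partition{2,2}\subseteq\pi_2$ and $3\leqslant s\leqslant d-3$ apply $T_3$, while for $s\in\{2,d-2\}$ in this last situation Proposition~\ref{xy22:prop} already gives realizability. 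In each instance the move drops the genus to $g-1$ and yields a candidate branch datum $\calD'$; for $g\geqslant 3$ this $\calD'$ is realizable by induction, hence so is $\calD$. For $g=2$, $\calD'$ has genus $1$ and is realizable by Theorem~\ref{torus:thm} \emph{unless} it is one of items~(7)--(11); exactly as in the torus proof, that eventuality pins $\calD$ down to one of a few explicit shapes, which I would then handle by a different move (for instance a $T_4$ in place of a $T_2$) or by Propositions~\ref{xy22:prop} and~\ref{dessins:special-families:prop}, checking that the new target avoids~(7)--(11) --- the sporadic items~(7)--(10) only concern $d\leqslant 16$ and are dispatched by hand or by the computer analysis mentioned in the introduction, while item~(11) has $\pi_1=\partition{2,\ldots,2}$, a shape that the re-routed moves always destroy.

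It remains to treat the data left outside the scheme above, namely those with $\max(\pi_1),\max(\pi_2)\leqslant 3$, with neither $\pi_j$ containing $\partition{1,1,3}$, and for which the $T_3$ precondition fails. Using $\ell_1+\ell_2=d-2g$ together with the constraint that $\pi_1$ and $\pi_2$ both sum to $d$, one checks that each of $\pi_1,\pi_2$ must consist almost entirely of $3$'s; solving the resulting congruences modulo $3$ forces $d\in\{6g,6g+2,6g+3,6g+4,6g+6\}$ with $\pi_1,\pi_2$ exactly as in items~1--5 of Proposition~\ref{dessins:special-families:prop}, whence realizability follows (with $s=2$ again covered by Proposition~\ref{xy22:prop}). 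In the complementary sub-case where $\pi_1$ has a single $3$, a short estimate on $v(\pi_2)$ (which must be large since $v(\pi_1)$ is small and $g\geqslant 2$) shows that $\partition{3,3}\subseteq\pi_2$ and $\partition{2,2}\subseteq\pi_1$, so in fact $T_3$ (or Proposition~\ref{xy22:prop} when $s=2$) applies after switching the roles of $\pi_1$ and $\pi_2$.

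The step I expect to be the main obstacle is the bookkeeping inside the $g=2$ layer: one must verify that whenever a reduction move would land on an exceptional genus-$1$ datum there is always an alternative move, or a direct appeal to Proposition~\ref{xy22:prop} or~\ref{dessins:special-families:prop}, producing a realizable target, and one must dispose of the finitely many small-degree data arising as pre-images of items~(7)--(10). For $g\geqslant 3$ this difficulty disappears entirely, since the inductive hypothesis already guarantees that every genus-$(g-1)$ target is realizable; so the real content is concentrated in the base case.
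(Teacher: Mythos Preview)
Your plan is correct and matches the paper's proof in all essential respects: induction on $g$ with base step Theorem~\ref{torus:thm}, the reduction moves $T_1$--$T_4$ for the case split on the shapes of $\pi_1,\pi_2$, Proposition~\ref{xy22:prop} for $s=2$, and Proposition~\ref{dessins:special-families:prop} for the residual families where both partitions consist almost entirely of $3$'s.

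The one organizational difference worth noting is that the paper disposes of the $g=2$ bookkeeping more cheaply than you anticipate. Rather than landing on an exceptional genus-$1$ datum and then ``re-routing'' to a different move, the paper invokes Zheng's computer data once at the outset to assume $d\geqslant 21$; since each move lowers $d$ by at most $4$, the target $\calD'$ always has degree $\geqslant 17$, which already rules out the sporadic items~(7)--(10). It then remains only to avoid item~(11), and this is automatic for the moves as actually applied: in $T_2$ one takes $x_1=1$, in $T_1$ and $T_3$ the target partition acquires a pair of $1$'s, and in $T_4$ the relevant partition picks up $y-2$ with $\pi_2\not\ni 2$, so in every case $\calD'$ has a partition containing a $1$ and hence cannot be item~(11) (whose two partitions are $\partition{2,\ldots,2}$ and $\partition{2,\ldots,2,3,5}$). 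Thus no secondary move is ever needed, and the $g=2$ layer is no harder than $g\geqslant 3$.
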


\begin{proof}
By induction on $g\geqslant1$ we prove that the only exceptional $\calD$ as in the statement are items~(7) to~(11) in Theorem~\ref{main:thm}.
The base step $g=1$ is Theorem~\ref{torus:thm}. Now we assume $g\geqslant2$ and we do the inductive step.
Again~\cite[Proposition~5.3]{EKS} implies that we can assume $2\leqslant s\leqslant d-2$.
Moreover again the computer-aided analysis of~\cite{Zh06} shows that for $d\leqslant 20$ we have no exceptions, so
we assume $d\geqslant 21$. We now analyse various cases showing that there always exists a reduction move $T_j:\calD\leadsto\calD'$
such that $\calD'$ is not item~(11) in Theorem~\ref{main:thm}, which is enough. Note that if $\ell_j=\ell(\pi_j)$ we have $\ell_1+\ell_2=d-2g$.

\medskip

\noindent
\textsc{Case 1:}
$\pi_1=\partition{\matebold{x}}*\rho_1$, $\pi_2=\partition{\mathbf{2}}*\rho_2$ with $x\ge4$.
We apply $T_2$ with $x_1=1$ and $x_2=x-3$, getting the desired
$$\calD'=((g-1)\cdot T,S,d-2,3;\partition{1,x-3}*\rho_1,\rho_2,\partition{s-1,d-s-1}).$$

\medskip

\noindent
\textsc{Case 2:}
$\pi_1=\partition{\matebold{x}}*\rho_1, \pi_2=\partition{\matebold{y}}*\rho_2, $ with $x\geqslant4, y\geqslant3$, and $\pi_2\not\ni2$. Then we can apply $T_4$ getting the desired
$$\calD'=((g-1)\cdot T,S,d-2,3;\partition{x-2}*\rho_1,\partition{y-2}*\rho_2,\partition{s-1,d-s-1}).$$

\medskip

\noindent
\textsc{Case 3:}
$\max(\pi_1)=3$ and $\max(\pi_2)\leqslant 3$. Here we further distinguish some situations:
\begin{itemize}
  \item[(a)] $\pi_1\supseteq\partition{1,1}$, hence $\pi_1=\partition{\mathbf{3},1,1}*\rho_1$. Then we apply $T_1$ getting the desired
  $$\calD'=((g-1)\cdot T,S,d,3;\partition{1,1,1,1,1}*\rho_1,\pi_2,\partition{s,d-s})$$
  \item[(b)] $\pi_1=\partition{\mathbf{3},\mathbf{3}}*\rho_1,\ \pi_2=\partition{2,2}*\rho_2$; for $s=2$ or $d-2$ we conclude that $\calD$ is
  realizable by Proposition~\ref{xy22:prop}, otherwise we apply $T_3$ getting the desired
  $$\calD'=((g-1)\cdot T,S,d-4,3;\partition{1,1}*\rho_1,\rho_2,\partition{s-2,d-s-2})$$
  \item[(c)] $\pi_1\not\supseteq\partition{2,2}$. If $\pi_1\supseteq\partition{1,1}$ we are in case (3-a), so we can assume $\pi_1\not\supseteq\partition{1,1}$. Then of course
  $\pi_1\supseteq\partition{3,3}$. Now if $\pi_2\supseteq\partition{2,2}$ we are in case (3-b), so we can assume $\pi_2\not\supseteq\partition{2,2}$. If $\pi_2\not\supseteq\partition{3,3}$ we have $\ell_2\geqslant d-3$, but
  $\ell_1\geqslant d/3$, which combined with $\ell_1+\ell_2=d-2g$ gives $d+6g\leqslant 9$, which we are excluding. So we can assume $\pi_2\supseteq\partition{3,3}$, and again by case (3-b), switching roles, we can also assume that
  $\pi_1\not\supseteq\partition{2,2}$. We then have that $\pi_j=\partition{3,\ldots,3}*\rho_j$ with $\rho_j\subseteq\partition{1,2}$ for $j=1,2$, whence $\calD$ is
  realizable by Proposition~\ref{dessins:special-families:prop}
  \item[(d)] $\pi_1\not\supseteq\partition{3,3}$. This implies that $\ell_1\geqslant (d-3)/2+1$, namely $\ell_1\geqslant (d-1)/2$. If also $\pi_2 \not\supseteq\partition{3,3}$ then
  $\ell_2\geqslant (d-1)/2$ as well, which contradicts $\ell_1+\ell_2=d-2g$. So $\pi_2\supseteq\partition{3,3}$, but we can assume $\pi_1\supseteq\partition{2,2}$ by case (3-c), so
  we are in case (3-b) with roles switched, and again we conclude that $\calD$ is realizable
  \item[(e)] If none of the above holds, in particular $\pi_1\supseteq\partition{2,2}$ and $\pi_2\not\supseteq\partition{2,2}$. Now if $\pi_2\supseteq\partition{3,3}$ we are in case (3-b)
  with roles switched, so we can assume $\pi_2\not\supseteq\partition{3,3}$, therefore $\ell_2\geqslant d-3$, but $\ell_1\geqslant d/3$, which as above is excluded.
\end{itemize}
We cannot have $\max(\pi_1)=\max(\pi_2)=2$ otherwise $\ell_1,\ell_2\geqslant d/2$, but $\ell_1+\ell_2=d-2g$. So either, up to switching,
$\max(\pi_1)\geqslant 4$ or $\max(\pi_1)=3$ and $\max(\pi_2)\leqslant 3$. The latter situation is Case 3. In the former either $\pi_2\ni2$, whence Case 1,
or $\pi_2\not\ni2$, but $\pi_2$ is non-trivial, so $\max(\pi_2)\geqslant3$, and we are in Case 2.
\end{proof}

% `
% ~

\section{Realizability for more than three branching points}\label{morepoints:sec}

We begin by citing~\cite[Complement~5.6]{EKS}:

\begin{prop}\label{deg4:prop}
The only exceptional branch datum $$\calD=(g\cdot T,S,4,n;\pi_1,\ldots,\pi_n)$$ is item~(13) in Theorem~\ref{main:thm}.
\end{prop}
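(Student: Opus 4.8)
The plan is to use the monodromy reformulation (Proposition~\ref{monodromy:prop}). Since every $\pi_i$ is a non-trivial partition of $4$, it is one of $\partition{4}$ (realised by a $4$-cycle, with $v=3$), $\partition{3,1}$ (a $3$-cycle, $v=2$), $\partition{2,2}$ (a double transposition, $v=2$), or $\partition{2,1,1}$ (a transposition, $v=1$), and the Riemann--Hurwitz relation~(\ref{v-version:RH}) reads $v(\pi_1)+\cdots+v(\pi_n)=2(g+3)$.

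The first step is to prove the exceptionality of item~(13), and that it is the only exceptional datum whose partitions are almost all $\partition{2,2}$. Let $V\subset\permu_4$ be the Klein four-group $\{\operatorname{id}\}\cup\{\sigma:\pi(\sigma)=\partition{2,2}\}$; being a subgroup, it is closed under multiplication. If $\pi_1=\cdots=\pi_{n-1}=\partition{2,2}$ and $\pi_n\ne\partition{2,2}$, then any monodromy $\theta_1,\dots,\theta_n$ would force $\theta_n=(\theta_1\cdots\theta_{n-1})^{-1}\in V$, so $\pi_n\in\{\partition{2,2},\partition{1,1,1,1}\}$, a contradiction; hence every candidate branch datum of this shape is exceptional. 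Conversely, for such a datum~(\ref{v-version:RH}) forces $v(\pi_n)=2(g+3)-2(n-1)$ to be even, and of $\partition{4},\partition{3,1},\partition{2,1,1}$ only $\partition{3,1}=\partition{1,3}$ has even $v$; thus the only candidate branch datum with at least $n-1$ partitions equal to $\partition{2,2}$ is item~(13), occurring with $g=n-3$.

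Next I would realise every remaining candidate branch datum, by induction on $n$. If some $\pi_i=\partition{4}$ realisability is immediate from Theorem~\ref{length-1:thm}, so assume $\pi_i\in\{\partition{3,1},\partition{2,2},\partition{2,1,1}\}$ for all $i$; then $n\geq3$, since $n=2$ would force both partitions to be $\partition{4}$. For the base case $n=3$ the constraint $v(\pi_1)+v(\pi_2)+v(\pi_3)=2(g+3)\geq6$ forces all $v(\pi_i)=2$ and $g=0$, so every $\pi_i$ has length $2$ and Theorem~\ref{sphere:thm} applies, it being enough to note that at degree $4$ the only candidate among items~(1)--(6) is item~(13) with $n=3$ (up to reordering). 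The all-$\partition{2,2}$ family $((n-3)\cdot T,S,4,n;\partition{2,2},\dots,\partition{2,2})$ is realised directly, letting the $\theta_i$ run through the non-trivial elements of $V$ (with repeats used to pad out to any $n\geq3$): the product is $\operatorname{id}$ and the group generated is the transitive group $V$. For the inductive step ($n\geq4$), if $\calD$ is neither item~(13) nor all-$\partition{2,2}$, then at least two of its partitions differ from $\partition{2,2}$, and I would merge a well-chosen pair $\pi_i,\pi_j$ into a single partition $\pi$ by an algebraic reduction move: move $A_1$ (Proposition~\ref{A1:move:prop}, available via Proposition~\ref{smallv:existence:prop}) whenever some pair satisfies $v(\pi_i)+v(\pi_j)\leq3$, and otherwise — when every $v(\pi_i)=2$, so each $\pi_i\in\{\partition{3,1},\partition{2,2}\}$ — move $A_2$ (Proposition~\ref{A2:move:prop}, through Propositions~\ref{largev:truecycle:existence:prop} and~\ref{largev:twocycles:existence:prop}) applied to a $\partition{3,1}$ and a $\partition{2,2}$, or to two $\partition{3,1}$'s if no $\partition{2,2}$ is present. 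In each case one checks the move is admissible and that the cycle type of $\pi$ can be chosen so that the reduced datum $\calD'$, with $n-1$ branch points, is again not item~(13); the induction hypothesis then makes $\calD'$ realisable, hence $\calD$.

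The main obstacle is exactly this last bookkeeping. The tight spots are the configurations with only two non-$\partition{2,2}$ partitions: if those are $\partition{3,1},\partition{3,1}$ one must not merge them (that would produce item~(13)) but rather merge a $\partition{3,1}$ with a $\partition{2,2}$, keeping two copies of $\partition{3,1}$; if they are $\partition{2,1,1},\partition{2,1,1}$ one must merge them into $\partition{2,2}$ rather than $\partition{3,1}$, landing in the all-$\partition{2,2}$ family; and if they are $\partition{3,1},\partition{2,1,1}$ their product can be taken to be a $4$-cycle, after which Theorem~\ref{length-1:thm} finishes. Likewise the case ``every $\pi_i\in\{\partition{3,1},\partition{2,2}\}$ with at least two copies of $\partition{3,1}$'' either reduces via $A_2$ as above or is realised directly by $3$-cycles and double transpositions that generate $A_4$ and multiply to $\operatorname{id}$. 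All of these are finite, explicit checks, so once the inductive framework is set up no serious difficulty remains.
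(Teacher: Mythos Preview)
The paper does not prove this proposition: it simply cites \cite[Complement~5.6]{EKS}. Your argument is therefore not a comparison target in the usual sense, but rather an independent proof built out of the paper's own machinery (the moves $A_1,A_2$ and Theorems~\ref{length-1:thm} and~\ref{sphere:thm}), together with the Klein-four observation that the paper itself mentions at the end of Section~\ref{statement:sec} when justifying the exceptionality of item~(13).

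Your approach is sound. The exceptionality half is exactly the argument the paper alludes to. For realizability, the reduction strategy works: once $\partition{4}$ is excluded via Theorem~\ref{length-1:thm}, every partition has $v\in\{1,2\}$; if some $v_i=1$ one can always merge with a neighbour via $A_1$ to obtain either a $\partition{4}$ (done) or a datum with $n-1$ partitions that still has at least two non-$\partition{2,2}$ entries; if all $v_i=2$, merging a $\partition{3,1}$ with a $\partition{2,2}$ (or, when none exists, two $\partition{3,1}$'s) via $A_2$ always yields a $\partition{3,1}$, preserving at least two copies of $\partition{3,1}$. The tight spots you list are precisely the cases where a careless merge would land on item~(13), and your prescriptions for them are correct. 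The all-$\partition{2,2}$ family is indeed realized by elements of the Klein group $V$; one only needs to observe that for every $n\geqslant3$ one can choose $n$ non-trivial elements of $V$ whose product is the identity and which together generate $V$ (e.g.\ $a,b,c$ followed by pairs $a,a$ or a further $a,b,c$, depending on the parity of $n-3$).

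Two small comments. First, your sentence ``the only candidate branch datum with at least $n-1$ partitions equal to $\partition{2,2}$ is item~(13)'' should read ``exactly $n-1$'', since the all-$\partition{2,2}$ datum also has at least $n-1$. Second, the final paragraph defers several case-checks; they are genuinely routine, but a reader would appreciate the one-line verification that in the $A_2$ branch the hypothesis $v(\pi_3)+\cdots+v(\pi_n)\geqslant d-1$ holds (it does, since $n\geqslant4$ and each $v_i=2$). With these cosmetic fixes, your argument gives a self-contained proof where the paper relies on an external reference.
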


The next result eventually completes the proof of Theorem~\ref{main:thm}:

\begin{thm}\label{morepoints:thm}
The only exceptional data
$$\calD=(g\cdot T,S,d,n;\pi_1,\ldots,\pi_{n-1},\partition{s,d-s})$$
with $n\geqslant 4$ are items~(12) and~(13) in Theorem~\ref{main:thm}.
\end{thm}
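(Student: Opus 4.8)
The plan is to prove the slightly sharper statement that the only exceptional data with $n\geqslant 4$ and $\ell(\pi_n)=2$ are items~(12) and~(13), by induction on $n$, the base of the induction being the case $n=3$ already settled in Theorems~\ref{sphere:thm}, \ref{torus:thm} and~\ref{bigg:thm}. First I would dispose of two easy reductions: when $d=4$ the classification is exactly Proposition~\ref{deg4:prop} (giving~(13) and nothing else), while $d\in\{2,3\}$ is vacuous or immediate; and when $s\in\{1,d-1\}$ realizability follows from \cite[Proposition~5.3]{EKS}. From now on I assume $d\geqslant 5$ and $2\leqslant s\leqslant d-2$, so that in particular $\partition{s,d-s}$ is not of the form $\partition{2,\ldots,2}$.

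The inductive step rests on the two algebraic moves of Section~\ref{monodromy:sec}. Write $v_j=v(\pi_j)$ for $j=1,\ldots,n-1$; the Riemann--Hurwitz condition in the form~(\ref{v-version:RH}) gives $v_1+\ldots+v_{n-1}=d+2g$. I will always merge a pair $\pi_i,\pi_j$ with $i,j\leqslant n-1$, so that $\partition{s,d-s}$ is untouched and survives in $\calD'$, and the ``rest'' that is kept always contains $\partition{s,d-s}$ together with at least one further partition (here $n\geqslant 4$ is used), whence $v(\text{rest})\geqslant(d-2)+1=d-1$; consequently move $A_2$ is applicable to such a pair whenever $v_i+v_j\geqslant d-1$, and move $A_1$ is applicable whenever $v_i+v_j\leqslant d-1$, so at least one move is always available. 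The strategy is then: if some admissible pair has $v_i+v_j\geqslant d-1$ and $v_i+v_j\equiv d-1\pmod 2$, then $A_1$ (if the sum equals $d-1$) or $A_2$ (if it exceeds $d-1$) produces a $\calD'$ containing the partition $\partition{d}$, which is realizable by Theorem~\ref{length-1:thm}, and we are done; otherwise we apply whichever of $A_1,A_2$ is available and argue that the resulting $\calD'$ — which has $n-1$ branch points and still contains the length-$2$ partition $\partition{s,d-s}$ — is realizable because it is not one of the items~(1) to~(13). Here items~(12) and~(13) are ruled out by their degrees ($d=8$, $d=4$) together with the specific shapes of their partitions, and when $n-1=3$ the partitions produced ($\partition{d-1,1}$, or $\partition{d/2,d/2}$, or an orbit-length partition whose shape we may partly choose via Proposition~\ref{smallv:existence:prop}) are matched against the rigid lists of items~(1) to~(11); when one choice of pair happens to land on an exceptional $\calD'$, one re-runs the argument with a different pair among $\pi_1,\ldots,\pi_{n-1}$, and the exceptional items are rigid enough that this succeeds.

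The delicate point — and the residual case where item~(12) actually emerges — is when all the $v_j$ have the same parity and $d$ is even (so that no pair-merge can yield $\partition{d}$) and moreover no admissible pair has $v_i+v_j\leqslant d-1$, forcing the use of $A_2$; here the only outcomes of $A_2$ are $\partition{d-1,1}$ and, when the merged pair is $\partition{2,\ldots,2},\partition{2,\ldots,2}$, $\partition{d/2,d/2}$. A short computation shows $d\leqslant 4g$ in this situation, and that if some $\pi_j$ is not $\partition{2,\ldots,2}$ then merging it with another $\pi_k$ yields $\partition{d-1,1}$, giving a $\calD'$ that is never one of items~(1) to~(13) — the only trap being that $\partition{d-1,1}$ can be the length-$2$ slot of item~(2) with $s=1$, which is excluded once one also checks the genus. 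This leaves exactly the family where $\pi_1=\ldots=\pi_{n-1}=\partition{2,\ldots,2}$, for which $d(n-3)=4g$; there one merges a copy of $\partition{2,\ldots,2}$ with $\partition{s,d-s}$ (yielding $\partition{d}$ when $d\equiv 2\pmod 4$, hence realizability), or merges two copies of $\partition{2,\ldots,2}$ (yielding $\partition{d/2,d/2}$ and dropping the genus by one), and concludes by the inductive hypothesis together with Theorems~\ref{torus:thm} and~\ref{bigg:thm} that every member of the family is realizable except $(2\cdot T,S,8,4;\partition{2,2,2,2},\partition{2,2,2,2},\partition{2,2,2,2},\partition{3,5})$, i.e.\ item~(12), whose exceptionality was already established in Section~\ref{statement:sec}. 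I expect this last bookkeeping — verifying that in the all-$\partition{2,\ldots,2}$ family only item~(12) survives, and that every errant reduction elsewhere can be replaced by a good one — to be the main obstacle; the remainder is a routine, if lengthy, case check.
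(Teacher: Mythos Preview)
Your outline is built on the same skeleton as the paper's proof: induction on $n$ using the algebraic moves $A_1$ and $A_2$, merging two of the first $n-1$ partitions and invoking the $n-1$ case. The paper also splits according to whether some $v_i+v_j$ is small (use $A_1$) or all are large (use $A_2$). Two genuine differences: the paper treats $n=4$ as a base case and disposes of $d\leqslant 16$ (and later $n=5,\ d\leqslant 8$) by computer, whereas you attempt a computer-free argument; and your parity observation (forcing $\pi=\partition{d}$ whenever $v_i+v_j\geqslant d-1$ has parity $d-1$) is a pleasant shortcut the paper does not use. Your explicit handling of the all-$\partition{2,\ldots,2}$ family, which correctly isolates item~(12) via the reduction to item~(8), is also not in the paper.

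The real gap is the $A_1$ branch for $n=4$. You write that if a merge lands on an exceptional $\calD'$ one ``re-runs with a different pair'' and that ``the exceptional items are rigid enough that this succeeds'', but this is exactly where the work lies. The paper, even after assuming $d\geqslant 17$, needs a delicate argument when the first merge produces item~(11): it tries alternate pairs, and in one subcase (your $\pi_1$ having $v_1=2$ and $\pi_2=\partition{2,\ldots,2}$) it must exhibit \emph{specific} $\theta_1,\theta_2$ to force $\pi(\theta_1\theta_2)=\partition{6,2,\ldots,2}$ rather than $\partition{2,\ldots,2}$ or $\partition{2,\ldots,2,3,5}$. Proposition~\ref{smallv:existence:prop} only guarantees the correct value of $v(\theta_1\theta_2)$, not its cycle type, so ``whose shape we may partly choose'' overstates what that proposition gives you; controlling the shape requires ad hoc constructions as in the paper's Case~1(c). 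Without the computer-aided floor $d\geqslant 17$ you would have to repeat this sort of analysis for the sporadic items~(1) and (7)--(10) and for items~(2)--(6) at small $d$ as well; this is feasible but is a substantial case check rather than the ``routine bookkeeping'' you anticipate.
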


\begin{proof}
Within this proof we use the notation of Section~\ref{monodromy:sec}.
We proceed by induction on $n$. The base step $n=4$ requires some work.
First of all a computer-aided analysis based on~\cite{Zh06}
(see the Appendix for more details)
implies that for $d\leqslant16$ the only exceptional $\calD$ as in the statement
are item~(12) and item~(13) with $n=4$, so we assume $d\geqslant17$. Set $v_j=v(\pi_j)$.

\medskip

\noindent
\textsc{Case 1:} $v_1+v_2<d$. We can then apply the reduction move $A_1$ at $\pi_1$ and $\pi_2$, taking $\theta_1$ and $\theta_2$ as given by Proposition~\ref{smallv:existence:prop}, getting
$$\calD\leadsto\calD'=(g\cdot T,S,d,3;\pi,\pi_3,\partition{s,d-s})$$
which is realizable unless it is item (11), namely
$$\calD'=(T,S,2k,3;\partition{2,\ldots,2},\partition{2,\ldots,2,3,5},\partition{k,k}),$$
so $g=1$, $d=2k$ and $s=k$, with $k>8$.
Moreover either
\begin{itemize}
\item[(I)] $\pi=\partition{2,\ldots,2},\ \pi_3=\partition{2,\ldots,2,3,5}$, so $v_1+v_2=k,\ v_3=k+2$, or
\item[(II)] $\pi=\partition{2,\ldots,2,3,5},\ \pi_3=\partition{2,\ldots,2}$, so $v_1+v_2=k+2,\ v_3=k$.
\end{itemize}
In case (I) we can suppose $v_1\leqslant k/2$, hence
$$k+2<1+k+2\leqslant v_1+v_3\leqslant 3k/2+2<2k=d$$
so we can apply Proposition~\ref{smallv:existence:prop} and the reduction move $A_1$ to $\calD$ at $\pi_1$ and $\pi_3$ getting an analogous $\calD'$ with $v(\pi)>k+2$,
which cannot be item~(11), so it is realizable.

In case (II) instead, noting that $v_1+v_2=k+2$ we see that the following cases cover all the possibilities up to switching indices:
\begin{itemize}
\item[(a)] $v_1<k$ and $v_1\neq 2$
\item[(b)] $v_1=2$ and $\pi_2\neq\partition{2,\ldots,2}$
\item[(c)] $v_1=2$ and $\pi_2=\partition{2,\ldots,2}$.
\end{itemize}
In case (a), since $v_3=k$ we have $v_1+v_3<2k=d$, so we can apply Proposition~\ref{smallv:existence:prop} and the reduction move $A_1$ to $\calD$ at $\pi_1$ and $\pi_3$, getting
$\calD'=(g\cdot T,S,2k,3;\pi,\pi_2,\partition{k,k})$
with $v(\pi)=v_1+k$, that cannot be $k$ or $k+2$, so $\calD'$ is realizable.
In case (b) we have $v_1+v_3=2+k<2k=d$, so again we can apply Proposition~\ref{smallv:existence:prop} and the reduction move $A_1$ to $\calD$ at $\pi_1$ and $\pi_3$, getting
$\calD'=(g\cdot T,S,2k,3;\pi,\pi_2,\partition{k,k})$ with $v_2=k$ but $\pi_2\neq\partition{2,\ldots,2}$, so $\calD'$ is realizable.
In case (c) note that $\pi_1=\partition{1,\ldots,1,2,2}$ or $\pi_1=\partition{1,\ldots,1,3}$.
Here we apply Proposition~\ref{A1:move:prop} in its full strength, namely choosing $\theta_1$ and $\theta_2$. In both cases we take
$\theta_2=(1,2)(3,4)\cdots(2k-1,2k)$, while $\theta_1=(1,3)(2,5)$ for $\pi_1=\partition{1,\ldots,1,2,2}$ and $\theta_1=(1,3,5)$ for $\pi_1=\partition{1,\ldots,1,3}$. Then $\theta_1\cdot\theta_2$ is
respectively
$$(1,5,6,2,3,4)(7,8)\cdots(2k-1,2k)\qquad (1,2,3,4,5,6)(7,8)\cdots(2k-1,2k)$$
whence
$$\calD\leadsto\calD'=(T,S,2k,3;\partition{6,2,\ldots,2},\partition{2,\ldots,2},\partition{k,k})$$
which is realizable.

\medskip

\noindent
\textsc{Case 2:} $v_i+v_j\geqslant d$ for all $1\leqslant i<j\leqslant 3$.
Without loss of generality we can assume $v_3\geqslant d/2$. Noting that
$$v_1+v_2\geqslant d\qquad v_3+v(\partition{s,d-s})\geqslant 1+d-2=d-1$$
we can apply move $A_2$ to $\calD$ at $\pi_1$ and $\pi_2$ (since $d\geqslant 17$ the assumptions of Proposition~\ref{A2:move:prop} are verified), getting
$$\calD\leadsto \calD'=(g'\cdot T,S,d,3;\pi,\pi_3,\partition{s,d-s})$$
with $v(\pi)\geqslant d-2$. We can now compute
\begin{eqnarray*}
g' & = & \frac12(v(\pi)+v_3+v(\partition{s,d-s}))-d+1\\
   & \geqslant & \frac12\left(d-2+\frac d2+d-2\right)-d+1=\frac d4-1\geqslant2,
\end{eqnarray*}
so $\calD'$ is realizable by Theorem~\ref{bigg:thm} and the base step $n=4$ is complete.

\medskip

For the induction step, we assume $n\geqslant5$ and we take $\calD$ as in the statement.
If $d=2$ the only candidate branch datum
$$\left(g\cdot T,S,2,2g+2;\partition{2},\ldots,\partition{2}\right)$$
is realizable by Theorem~\ref{length-1:thm}, so we can assume $d\geqslant 3$.
If $d=4$ the conclusion follows from Proposition~\ref{deg4:prop}, so we also assume $d\neq 4$. Now if up to
permutation we have $v_1+v_2\leqslant d-1$ we can apply a reduction move $A_1$ (also making use of Proposition~\ref{smallv:existence:prop}) to $\calD$ at $\pi_1$ and $\pi_2$, getting $\calD'$
with $n-1$ non-trivial partitions. Otherwise we have $v_1+v_2\geqslant d-1$ and $v_3+v_4\geqslant d-1$, so we can apply a reduction move $A_2$ to $\calD$ at $\pi_1$ and $\pi_2$
(since $d\geqslant3$ the assumptions of Proposition~\ref{A2:move:prop} are verified), getting again $\calD'$ with $n-1$ non-trivial partitions.
By induction, such a $\calD'$ can only be exceptional if it is item~(12) in Theorem~\ref{main:thm}, so $n-1=4$ and $d=8$.
But for $n=5$ and $d=8$ a computer-aided search based on~\cite{Zh06} implies that $\calD$ is not exceptional
(see the Appendix for more details).
\end{proof}

% ~ 
\section*{Appendix. Computational results}

In~\cite{Zh06} Zheng described an algorithm to compute all the exceptional 
candidate branch data of the form $(g\cdot T,S,d,n;\pi_1,\ldots,\pi_n)$ for fixed values of $n$ and $d$, and used it to produce a list of all the exceptional data with $d\leqslant 20$ and $n=3$.
We have implemented Zheng's algorithm and extended his computations to $d\leqslant 29$ (for $n=3$), confirming the prime-degree conjecture up to this level.
The source code of our implementation is publicly available 
at~\cite{BaroniCode}.

As an example of what our code can do, we have analyzed the 
realizability of all the candidate branch data of the form
$(S,S,d,3;\pi_1,\pi_2,\pi_3)$ with $d=11$ or $d=12$ and $\ell(\pi_3)=2$. The result was that there is no exceptional
candidate datum for $d=11$, and precisely those of 
Table~\ref{exceptional-sphere:table} for $d=12$.
These findings fall within the range already analyzed by
Zheng in~\cite{Zh06}, but they do not appear explicitly in 
his paper. They are also in perfect agreement with~\cite{Pako}.

\begin{table}[ht]
\center
\begin{tblr}{colspec={l}}
\hline
$(S,S,12,3;\partition{2,2,2,2,2,2},\partition{2,2,2,2,2,2},\partition{5,7})$\\
$(S,S,12,3;\partition{2,2,2,2,2,2},\partition{2,2,2,2,2,2},\partition{4,8})$\\
$(S,S,12,3;\partition{2,2,2,2,2,2},\partition{2,2,2,2,2,2},\partition{3,9})$\\
$(S,S,12,3;\partition{2,2,2,2,2,2},\partition{2,2,2,2,2,2},\partition{2,10})$\\
$(S,S,12,3;\partition{2,2,2,2,2,2},\partition{2,2,2,2,2,2},\partition{1,11})$\\
$(S,S,12,3;\partition{2,2,2,2,2,2},\partition{1,2,2,2,2,3},\partition{6,6})$\\
$(S,S,12,3;\partition{2,2,2,2,2,2},\partition{1,1,1,3,3,3},\partition{6,6})$\\
$(S,S,12,3;\partition{2,2,2,2,2,2},\partition{1,1,1,1,4,4},\partition{5,7})$\\
$(S,S,12,3;\partition{2,2,2,2,2,2},\partition{1,1,1,1,1,7},\partition{6,6})$\\
$(S,S,12,3;\partition{2,2,2,2,2,2},\partition{1,1,1,1,1,7},\partition{4,8})$\\
$(S,S,12,3;\partition{2,2,2,2,2,2},\partition{1,1,1,1,1,7},\partition{2,10})$\\
$(S,S,12,3;\partition{1,1,1,1,1,1,1,1,1,3},\partition{6,6},\partition{6,6})$\\
$(S,S,12,3;\partition{1,1,1,1,1,1,1,5},\partition{3,3,3,3},\partition{6,6})$\\
$(S,S,12,3;\partition{1,1,1,1,1,1,1,5},\partition{3,3,3,3},\partition{3,9})$\\
$(S,S,12,3;\partition{1,1,1,1,1,1,1,1,4},\partition{4,4,4},\partition{4,8})$\\
\hline
\end{tblr}\bigskip
\mycap{List of exceptional data $(S,S,d,3;\pi_1,\pi_2,\pi_3)$ with $d=11$ or $d=12$ and $\ell(\pi_3)=2$}
\label{exceptional-sphere:table}
\end{table}

\bigskip

Getting to the facts that were referred to above, 
we have used the same algorithm to establish the correctness of Theorem~\ref{main:thm} in the cases where $n=4$ with $d\leqslant 16$, and $n=5$ with $d\leqslant 8$.
Our computations show that the only exceptional candidate branch data $(g\cdot T,S,d,4;\pi_1,\ldots,\pi_4)$ with $d\leqslant 16$ and $\ell(\pi_4)=2$ are as follows:
\begin{itemize}
\item $\left(T,S,4,4;\partition{2,2},\partition{2,2},\partition{2,2},\partition{1,3}\right)$;
\item $\left(2\cdot T,S,8,4;\partition{2,2,2,2},\partition{2,2,2,2},\partition{2,2,2,2},\partition{3,5}\right)$.
\end{itemize}
As for those of the form 
$(g\cdot T,S,d,5;\pi_1,\ldots,\pi_5)$ with $d\leqslant 8$ and $\ell(\pi_5)=2$, the only exceptional one is:
\begin{itemize}
\item $\left(2\cdot T,S,4,5;\partition{2,2},\partition{2,2},\partition{2,2},\partition{2,2},\partition{1,3}\right)$.
\end{itemize}

% \end{document}

\vspace{.5cm}
\vspace{.5cm}

\noindent
Mathematical Institute\\
University of Oxford\\
Andrew Wiles Building, Woodstock Road\\
OX2 6GG OXFORD -- United Kingdom\\
\texttt{filippo.baroni@maths.ox.ac.uk}

\vspace{.5cm}

\noindent
Dipartimento di Matematica\\
Universit\`a di Pisa\\
Largo Bruno Pontecorvo, 5\\
56127 PISA -- Italy\\
\texttt{petronio@dm.unipi.it}

\end{document}